\newcommand{\thmlevel}{subsection}
\def\NewTheorem#1{%
  \newaliascnt{#1}{thm}
  \newtheorem{#1}[#1]{\csname #1Name\endcsname}
  \aliascntresetthe{#1}
  \expandafter\def\csname #1autorefname\endcsname{\csname #1Name\endcsname}
  \expandafter\def\csname #1Autorefname\endcsname{\csname #1Name\endcsname}
}
\newcommand{\theoremName}{\iflanguage{francais}{Th\'eor\`eme}{Theorem}}
\newcommand{\pbName}{\iflanguage{francais}{Probl\`eme}{Problem}}
\newcommand{\dfName}{\iflanguage{francais}{D\'efinition}{Definition}}
\newtheorem*{thm*}{\theoremName}
\newtheorem{thmintro}{\theoremName}
\theoremstyle{definition}
\newtheorem*{df*}{Definition}
\renewenvironment{proof}[1][]{\par
  \pushQED{\qed}%
  \normalfont \topsep6\p@\@plus6\p@\relax
  \trivlist
  \item[\hskip\labelsep
        \bfseries
    \proofname\ifthenelse{\equal{#1}{}}{}{\textmd{ (#1)}}\@addpunct{.}]\ignorespaces
}{%
  \popQED\endtrivlist\@endpefalse
}
\newcommand{\N}{\mathbb{N}}
\newcommand{\Z}{\mathbb{Z}}
\newcommand{\Q}{\mathbb{Q}}
\newcommand{\R}{\mathbb{R}}
\newcommand{\Lcot}{\mathbb{L}}
\newcommand{\T}{\mathbb{T}}
\newcommand{\id}{\operatorname{id}}
\newcommand{\dual}[1]{{#1}^{\vee}}
\newcommand{\op}{^{\mathrm{op}}}
\newcommand{\noloc}{\,:}
\newcommand{\Cc}{\mathcal{C}}
\newcommand{\Dd}{\mathcal{D}}
\newcommand{\Oo}{\mathcal{O}}
\newcommand{\sSets}{\mathbf{sSets}}
\newcommand{\Sp}{\mathbf{Sp}}
\newcommand{\cdgaunbounded}{\mathbf{cdga}}
\newcommand{\cdga}{\cdgaunbounded^{\leq 0}}
\newcommand{\Alg}{\mathbf{Alg}}
\newcommand{\dgArt}{\mathbf{dgArt}}
\newcommand{\dgLie}{\mathbf{dgLie}}
\newcommand{\dgAlg}{\mathbf{dgAlg}}
\newcommand{\dgMod}{\mathbf{dgMod}}
\newcommand{\Perf}{\mathbf{Perf}}
\newcommand{\Map}{\operatorname{Map}}
\newcommand{\Fct}{\operatorname{Fct}}
\DeclareMathOperator*{\colim}{colim}
\newcommand{\BGL}{\operatorname{BGL}}
\newcommand{\B}{\mathrm{B}}
\newcommand{\E}{\operatorname{E}}
\newcommand{\K}{\operatorname{K}}
\newcommand{\gl}{\mathfrak{gl}}
\newcommand{\Sym}{\operatorname{Sym}}
\newcommand{\MCdgMod}{\mathrm{dgMod}}
\newcommand{\MCcdga}{\mathrm{cdga}^{\leq 0}}
\newcommand{\MCdgAlg}{\mathrm{dgAlg}}
\newcommand{\homol}{\mathrm H}
\newcommand{\End}{\operatorname{End}}
\newcommand{\Spec}{\operatorname{Spec}}
\newcommand{\GL}{\mathbb{GL}}
\newcommand{\Gm}{\mathbb{G}_m}
\newcommand{\eldebutpardefaut}{1}
\newcommandx*{\el}[4][2=\eldebutpardefaut,4={,}]{#1_{#2}#4\dots#4#1_{#3}}
\newcommand{\quot}[2]{\ensuremath \mathchoice {\displaystyle #1 \raisebox{-2pt}{$\displaystyle \hspace{-1pt}{/} $} \raisebox{-4pt}{$\displaystyle \hspace{-1pt}{#2}$}}{\textstyle #1 \raisebox{-1pt}{$\textstyle \hspace{-1pt}{/} $} \raisebox{-2pt}{$\textstyle \hspace{-1pt}{#2}$}}{\scriptstyle #1 \raisebox{-1pt}{$\scriptstyle \hspace{-1pt}{/} $} \raisebox{-2pt}{$\scriptstyle \hspace{-1pt}{#2}$}}{\scriptscriptstyle #1 \raisebox{-1pt}{$\scriptscriptstyle \hspace{-1pt}{/} $} \raisebox{-2pt}{$\scriptscriptstyle \hspace{-1pt}{#2}$}}}
\newcommand{\cart}{\arrow[dr, phantom,
	"\text{\tikz[baseline=0em,xscale=0.04em,yscale=0.04em]
    \draw[-] (-1,0) -- (0,0) -- (0,1) ;}\mkern1mu\relax",
    very near start]}
\let\originalleft\left
\let\originalright\right
\renewcommand{\left}{\mathopen{}\mathclose\bgroup\originalleft}
\renewcommand{\right}{\aftergroup\egroup\originalright}
\def\DeclareMathBinOp{\@ifstar{\declaremathbinop@star}{\declaremathbinop@nostar}}
\def\declaremathbinop@star#1#2{\def#1{\test@subnexp@star#2}}
\def\test@subnexp@star#1{\@ifnextchar_{\isol@subnexp@star#1}{\test@exp@star#1}}
\def\test@exp@star#1{\@ifnextchar^{\isol@expnsub@star#1}{\mathbin{#1}}}
\def\isol@subnexp@star#1_#2{\@ifnextchar^{\eval@subnexp@star#1_#2}{\mathbin{\operatorname*{#1}_{#2}}}}
\def\eval@subnexp@star#1_#2^#3{\mathbin{\operatorname*{#1}_{#2}^{#3}}}
\def\isol@expnsub@star#1^#2{\@ifnextchar_{\eval@expnsub@star#1^#2}{\mathbin{\operatorname*{#1}^{#2}}}}
\def\eval@expnsub@star#1^#2_#3{\mathbin{\operatorname*{#1}_{#3}^{#2}}}
\def\declaremathbinop@nostar#1#2{\def#1{\test@subnexp@nostar#2}}
\def\test@subnexp@nostar#1{\@ifnextchar_{\isol@subnexp@nostar#1}{\test@exp@nostar#1}}
\def\test@exp@nostar#1{\@ifnextchar^{\isol@expnsub@nostar#1}{\mathbin{#1}}}
\def\isol@subnexp@nostar#1_#2{\@ifnextchar^{\eval@subnexp@nostar#1_#2}{\mathbin{\underset{#2}{#1}}}}
\def\eval@subnexp@nostar#1_#2^#3{\mathbin{\overset{#3}{\underset{#2}{#1}}}}
\def\isol@expnsub@nostar#1^#2{\@ifnextchar_{\eval@expnsub@nostar#1^#2}{\mathbin{\overset{#2}{#1}}}}
\def\eval@expnsub@nostar#1^#2_#3{\mathbin{\overset{#2}{\underset{#3}{#1}}}}
\let\OLDtimes\times
\DeclareMathBinOp*{\times}{\OLDtimes}
\let\OLDamalg\amalg
\DeclareMathBinOp*{\amalg}{\OLDamalg}
\let\OLDotimes\otimes
\DeclareMathBinOp*{\otimes}{\OLDotimes}
\let\OLDwedge\wedge
\DeclareMathBinOp*{\wedge}{\OLDwedge}
\def\DeclareArrow#1#2{\def#1{\test@subnexp#2}}
\def\test@subnexp#1{\@ifnextchar_{\isol@subnexp#1}{\test@exp#1}}
\def\test@exp#1{\@ifnextchar^{\isol@expnsub#1}{#1}}
\def\isol@subnexp#1_#2{\@ifnextchar^{\eval@subnexp#1_#2}{\underset{#2}{#1}}}
\def\eval@subnexp#1_#2^#3{\underset{#2}{\overset{#3}{#1}}}
\def\isol@expnsub#1^#2{\@ifnextchar_{\eval@expnsub#1^#2}{\overset{#2}{#1}}}
\def\eval@expnsub#1^#2_#3{\overset{#2}{\underset{#3}{#1}}}
\let\OLDto\to
\DeclareArrow{\to}{\OLDto}
\DeclareArrow{\from}{\leftarrow}
\DeclareArrow{\lra}{\longrightarrow}
\DeclareArrow{\lla}{\longleftarrow}
\newcommand\OLDbarotimes{\bar{\otimes}}
\DeclareMathBinOp*{\barotimes}{\OLDbarotimes}
\setlist[enumerate]{label=\emph{(\roman*)},ref=\emph{(\roman*)}}
\newlist{assertions}{enumerate}{1}
\setlist[assertions]{label={\rm(\alph*)},  ref={assertion (\alph*)}}
\newlist{conditions}{enumerate}{1}
\setlist[conditions]{label={(\arabic*)}, ref={condition (\arabic*)}}
\newlist{assumptions}{enumerate}{1}
\setlist[assumptions]{label={(\roman*)}, ref={assumption (\roman*)}}
\newlist{disjunction}{enumerate}{1}
\setlist[disjunction]{label={(\arabic*)}, ref={case (\arabic*)}}
\newcommand{\Bc}{\mathcal{B}}
\newcommand{\Lc}{\mathcal{L}}
\renewcommand{\Mc}{\mathcal{M}}
\newcommand{\Qc}{\mathcal{Q}}
\newcommand{\Hc}{\mathcal{H}}
\newcommand{\Fc}{\mathcal{F}}
\newcommand{\bch}{\widebar{\operatorname{ch}}}
\renewcommand{\GL}{\operatorname{GL}}
\newcommand{\bK}{\widebar{\mathrm{K}}}
\newcommand{\bBGL}{\widebar{\BGL}}
\newcommand{\FMP}{\mathbf{FMP}}
\newcommand{\HC}{\mathrm{HC}}
\newcommand{\HH}{\mathrm{HH}}
\newcommand{\bHH}{\widebar{\mathrm{HH}}{}}
\newcommand{\Aug}{\operatorname{Aug}}
\newcommand{\bHC}{\widebar{\mathrm{HC}}{}}
\newcommand{\infloop}{\operatorname{\Omega^\infty}}
\newcommand{\infsusp}{\operatorname{\Sigma^\infty}}
\newcommand{\FAb}{\operatorname{F^{Ab}}}
\newcommand{\GAb}{\operatorname{G_{Ab}}}
\newcommand{\Ab}{\mathbf{Ab}}
\renewcommand{\k}{\mathbf{k}}
\newcommand{\e}{\operatorname{e}}
\newcommand{\eAb}{\operatorname{e_{Ab}}}
\newcommand{\eQ}{\operatorname{e_\Q}}
\newcommand{\incl}{\operatorname{i}}
\newcommand{\iAb}{\operatorname{i_{Ab}}}
\newcommand{\iQ}{\operatorname{i_{\Q}}}
\newcommand{\iC}{\operatorname{i_\Cc}}
\newcommand{\jQ}{\operatorname{j_\Q}}
\newcommand{\ellAb}{\operatorname{\ell^{Ab}}}
\newcommand{\ellQ}{\operatorname{\ell^\Q}}
\newcommand{\SQ}{\operatorname{S^\Q}}
\newcommand{\SQb}{\operatorname{{\bar S}^\Q}}
\newcommand{\I}{\operatorname{I}}
\newcommand{\Ib}{\operatorname{{\bar I}}}
\newcommand{\Ac}{\mathcal{A}}
\newcommand{\bAc}{\widebar{\mathcal{A}}}
\newcommand{\TAb}{\operatorname{\T^{Ab}}}
\newcommand{\TQ}{\operatorname{\T^\Q}}
\newcommand{\form}{\operatorname{L}}
\newcommand{\LAb}{\operatorname{L^{Ab}}}
\newcommand{\LC}{\operatorname{L^\Cc}}
\newcommand{\LQ}{\operatorname{L^{\Q}}}
\newcommand{\bCE}{\widebar{\mathrm{CE}}}
\newcommand{\bCEO}{\widebar{\mathrm{CE}}{}^\Omega_\bullet}
\newcommand{\CE}{\mathrm{CE}}
\newcommand{\CEO}{\mathrm{CE}{}^\Omega_\bullet}
\newcommand{\PFMP}{\mathbf{PFMP}}
\newcommand{\hofib}{\operatorname{hofib}}
\newcommand{\MCdgBiMod}{\mathrm{dgBiMod}}
\newcommand{\dgBiMod}{\mathbf{dgBiMod}}
\newcommand{\sAlg}{\mathrm{sAlg}}
\newcommand{\Tr}{\operatorname{Tr}}
\newcommand{\n}{\mathfrak n}
\newcommand{\g}{\mathfrak{g}}
\newcommand{\bBG}{\widebar{\mathrm{BG}}}
\newcommand{\PicZ}{\operatorname{Pic}^\Z}
\title{The tangent complex of K-theory}
\author{Benjamin Hennion\thanks{IMO - Université Paris-Saclay - benjamin.hennion@universite-paris-saclay.fr}}
\date{\today}
\begin{document}

\maketitle
\begin{abstract}
We prove that the tangent complex of K-theory, in terms of (abelian) deformation problems over a characteristic $0$ field $\k$, is cyclic homology (over $\k$).
This equivalence is compatible with the $\lambda$-operations.
In particular, the relative algebraic K-theory functor fully determines the absolute cyclic homology over any field $\k$ of characteristic $0$.

We also show that the Loday-Quillen-Tsygan generalized trace comes as the tangent morphism of the canonical map $\BGL_\infty \to \K$.

The proof builds on results of Goodwillie, using Wodzicki's excision for cyclic homology and formal deformation theory à la Lurie-Pridham.
\end{abstract}
\tableofcontents

\addcontentsline{toc}{section}{Introduction}%

\section*{Introduction}

Computing the tangent space of algebraic $\K$-theory has been the subject of many articles. The first attempt known to the author is due to Spencer Bloch \cite{bloch:tangentk} in 1973. It was then followed by a celebrated article of Goodwillie \cite{goodwillie:tangentk} in 1986.

Considering the following example, we can easily forge an intuition on the matter. Let $A$ be a smooth commutative $\Q$-algebra and $G(A)$ be the group $G(A) = [\GL_\infty(A),\GL_\infty(A)]$ of elementary matrices. It admits a universal central extension
\[
 1 \lra \K_2(A) \lra G(A)^+ \lra G(A) \lra 1
\]
by the second $\K$-theory group of $A$. The group $G(A)^+$ is the Steinberg group of $A$. This above exact sequence can also be extended as an exact sequence
\begin{equation}\label{eq:steinberg}
 1 \lra \K_2(A) \lra G(A)^+ \lra \GL_\infty(A) \lra \K_1(A) \lra 1.
\end{equation}
This exact sequence can be thought as the universal extension of $\GL_\infty$ by $\K$-theory (both $\K_2$ and $\K_1$ here). This idea leads to Quillen's definition of $\K$-theory through the $+$-construction.

Consider now the tangent Lie algebra $\gl_\infty$ of $\GL_\infty \colon B \mapsto \GL_\infty(B)$. Its current Lie algebra $\gl_\infty(A) := \gl_\infty \otimes_\Q A$ also admits a universal central extension, this time by the first cyclic homology group
\begin{equation} \label{eq:LQTextension}
 0 \lra \HC_1^\Q(A) \lra \gl_\infty(A)^+ \lra \gl_\infty(A) \lra 0.
\end{equation}
The obvious parallel between those two central extensions leads to the idea that the (suitably considered) tangent space (or complex rather) of $\K$-theory should be cyclic homology. We will give meaning to this folkloric statement, prove it and provide a comparison between those extensions (see below).

In both of the aforementioned articles of Bloch and Goodwillie, the tangent space is considered in a rather naive sense: as if $\K$-theory were an algebraic group. Bloch defines the tangent space of $\K$-theory at $0$ in $\K(A)$ (for $A$ a smooth commutative algebra over $\Q$) as the fiber of the augmentation
\[
 \K(A[\varepsilon]) \to \K(A),
\]
where $\varepsilon$ squares to $0$. Goodwillie then extends and completes the computation  by showing that relative (rational) $\K$-theory is isomorphic to relative cyclic homology, in the more general setting where $A$ is a simplicial associative $\Q$-algebra. He shows that for any nilpotent extension $A'$ of $A$, the homotopy fibers of $\K(A') \otimes \Q \to \K(A) \otimes \Q$ and of $\HC^\Q_\bullet(A') \to \HC^\Q_\bullet(A)$ are quasi-isomorphic.

In this article, we give another definition of the tangent space of $\K$-theory using deformation theory, over any field $\k$ containing $\Q$. We then show that this tangent space is equivalent to the \emph{absolute} and \emph{$\k$-linear} cyclic homology. Before explaining exactly how this tangent space is defined, let us state the main result. In this introduction, we will restrict for simplicity to the connective case\footnote{For the unbounded case, we essentially replace in what follows $A \otimes_\k B$ with its connective cover $(A \otimes_\k B)^{\leq 0}$.} (or equivalently to the case of simplicial algebras).
With our definition of tangent complex, for any unital simplicial $\k$-algebra $A$, we have
\[
 \T_{\K(A),0} \simeq \HC_{\bullet-1}^\k(A)
\]
where the right-hand-side denotes the (shifted) $\k$-linear (absolute) cyclic homology of $A$. Of course, for this to hold for any field $\k$, the left-hand side has to depend on $\k$.
This dependence occurs by only considering relative $\K$-theory of nilpotent extensions $A'$ of $A$ of the form
\[
 A' = A \otimes_\k B \to A
\]
where $B$ is a (dg-)Artinian commutative $\k$-algebra with residue field $\k$.
This defines a functor 
\begin{align*}
 \bK(\Ac_A) \colon \dgArt_\k &\to \Sp_{\geq 0} \\
 B \hspace{1.2em} & \textstyle \mapsto \hofib\left(\K\left(A \otimes_\k B\right) \to \K(A) \right)
\end{align*}
from the category of dg-Artinian commutative $\k$-algebras with residue field $\k$ to the category of connective spectra.
The category of such functors $\dgArt_\k \to \Sp_{\geq 0}$ admits a full subcategory of formal deformation problems -- i.e. of functors satisfying a Schlessinger condition (see \autoref{df:schlessinger}). The datum of such a functor is now equivalent to the datum of a complex of $\k$-vector spaces.
The induced fully faithful functor $\dgMod_\k \to \Fct(\dgArt_\k, \Sp_{\geq 0})$ admits a left adjoint -- denoted by $\ellAb$ -- that forces the Schlessinger conditions.

We define\footnote{This definition is somewhat close to this idea of Goodwillie derivatives, but for functors defined on categories of non-abelian nature (of commutative algebras in our case). Replacing $\dgArt_\k$ with complexes of $\k$-vector spaces would give us the Goodwillie derivative of the $\K$-theory functor, namely Hochschild homology. See \autoref{rmk:goodwilliederivative}.}
the tangent complex of $\K$-theory (of $A$) as
\[
 \T_{\K(A),0} := \ellAb(\bK(\Ac_A))
\]
and our main theorem now reads
\begin{thmintro}[see \autoref{cor:tgtK}]\label{thmintro}
 Let $A$ be any (H-)unital dg-algebra over $\k$, with $\mathrm{char}(\k) = 0$.
 There is a natural equivalence
 \[
  \homol^{-\bullet}\left(\T_{\K(A),0} \right) \simeq \HC_{\bullet-1}^\k(A).
 \]
 This equivalence is furthermore compatible with the $\lambda$-operations on each side.
\end{thmintro}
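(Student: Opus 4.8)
The plan is to assemble the theorem from three ingredients: Goodwillie's comparison of relative $\K$-theory with relative cyclic homology, Wodzicki's excision, and the abstract machinery of formal deformation problems that computes $\ellAb$. The first step is to understand $\bK(\Ac_A)$ concretely as a functor on $\dgArt_\k$. For an Artinian $B$ with residue field $\k$, the augmentation $B \to \k$ exhibits $A \otimes_\k B \to A$ as a nilpotent extension (the kernel is $A \otimes_\k \mathfrak{m}_B$, and $\mathfrak{m}_B$ is nilpotent), so Goodwillie's theorem applies: the homotopy fiber $\hofib(\K(A \otimes_\k B) \to \K(A)) \otimes \Q$ is equivalent to the relative cyclic homology $\hofib(\HC^\Q_\bullet(A \otimes_\k B) \to \HC^\Q_\bullet(A))$, suitably shifted. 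The first task is therefore to identify this relative cyclic homology functor of $B$.

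The second step is where excision enters. I would use Wodzicki's excision to compute the relative rational cyclic homology of the extension $A \otimes_\k B \to A$ in terms of the \emph{reduced} cyclic homology of $\mathfrak{m}_B$ (the augmentation ideal), tensored appropriately with $A$. The point of passing to the $\k$-linear theory is that a Künneth-type decomposition should let me write the relative $\HC^\Q$ of $A \otimes_\k B$ as built from $\HC^\k_\bullet(A)$ and the reduced cyclic (or rather the relevant $\bHC$) of $B$ over $\k$. Concretely, I expect the functor $B \mapsto \bK(\Ac_A)(B)$, after rationalization, to be expressible through the reduced cyclic homology $\bHC^\k_\bullet(B)$ paired with the fixed datum $\HC^\k_{\bullet-1}(A)$; this is precisely the form that makes the deformation-theoretic functor recognizable.

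The third step is to feed this into the $\ellAb$ machinery. By construction $\ellAb$ is the left adjoint forcing the Schlessinger/Schlessinger conditions, i.e. it extracts the tangent complex of the formal moduli problem associated to the functor $\bK(\Ac_A)$. The standard computation of the tangent complex of a formal deformation problem evaluates the functor on square-zero extensions $\k \oplus M[n]$ and reads off the dependence on $M$; by Lurie--Pridham this recovers the underlying complex. So I would compute $\ellAb(\bK(\Ac_A))$ by evaluating on the Artinian algebras $\k \oplus M$ with $M$ a shifted line, using the cyclic-homology description from the previous step, and checking that what comes out is exactly $\HC^\k_{\bullet-1}(A)$ (up to the cohomological reindexing $\homol^{-\bullet}$). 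The reduced cyclic homology of the trivial square-zero extensions should contribute the degree shift and collapse to the identity coefficient, leaving $\HC^\k_{\bullet-1}(A)$.

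The main obstacle, I expect, is the bookkeeping that makes the base field $\k$ appear correctly: Goodwillie's theorem produces cyclic homology over $\Q$, whereas the target is $\k$-linear cyclic homology, and the whole subtlety of the theorem is that the $\k$-dependence is manufactured by restricting to extensions of the form $A \otimes_\k B$. Reconciling the $\Q$-linear output of Goodwillie with the $\k$-linear statement requires a careful base-change/Künneth argument for cyclic homology (something like $\HC^\Q(A \otimes_\k B)$ decomposing via $\HC^\k$ and the cyclic homology of $\k$ over $\Q$), and controlling this compatibly with the passage to the formal moduli problem is the delicate point. The compatibility with $\lambda$-operations would then follow by checking that every step --- Goodwillie's equivalence, excision, and the $\ellAb$ adjunction --- is natural for the $\lambda$-structure, with the Adams operations on $\K$-theory matching the Hodge/weight decomposition on cyclic homology; verifying this naturality on the nose is the second place where real care is needed.
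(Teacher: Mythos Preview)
Your proposal has two genuine gaps.

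First, step~3 conflates the tangent complex of a formal moduli problem with $\ellAb$ of a pre-FMP. Evaluating on square-zero extensions $\k \oplus M[n]$ computes the tangent complex of a functor that \emph{already} satisfies the Schlessinger condition. But neither $\bK(\Ac_A)$ nor $\bHC_\bullet^\Q(\Ac_A)$ does: by definition $\ellAb = \TAb \circ \LAb$ first forces Schlessinger via a left adjoint, which changes the values of the functor before one reads off the tangent. Restricting to square-zero extensions and then linearizing amounts instead to computing the Goodwillie derivative, and as the paper observes in \autoref{rmk:goodwilliederivative}, that yields $\HH_\bullet^\k(A)[1]$ rather than $\HC_\bullet^\k(A)[1]$. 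The gap between the two is precisely what $\LAb$ contributes on non-split Artinians, and your approach discards it.

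Second, step~2 relies on a K\"unneth-type decomposition of $\HC_\bullet^\Q(A \otimes_\k B)$ in terms of $\HC_\bullet^\k(A)$ and something depending only on $B$, but cyclic homology has no such K\"unneth formula. The paper's substitute is to prove that the left adjoint $\ellQ \colon \PFMP_\k^\Q \to \dgMod_\k$ is non-unitally symmetric monoidal on pointed functors (\autoref{prop:monoidal}). This monoidality is what converts the $\Q$-linear Bar and Hochschild complexes of the functor $\bAc_A = A \otimes_\k \Aug(-)$ into the $\k$-linear ones of $A$, since $\ellQ(\bAc_A) \simeq A$. The remaining passage from $\HC_\bullet^\Q(\bAc_A)$ to the relative $\bHC_\bullet^\Q(\Ac_A)$ is an excision statement (\autoref{thm:excision}) that is \emph{not} a formal consequence of Wodzicki's theorem: it has to be proved at the level of functors $\dgArt_\k \to \dgMod_\Q^{\leq 0}$, by running the Guccione--Guccione filtrations pointwise and checking that the graded pieces are killed by $\ellQ$.
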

\noindent We later extend this result to the case where $A$ is replaced by a quasi-compact quasi-separated scheme.

As a consequence, the $\K$-theory functor fully determines the cyclic homology functors over all fields of characteristic $0$.
Moreover, it gives its full meaning to the use of ``Additive $\K$-theory'' as a name of cyclic homology (see \cite{feigintsygan:addK}).

Using our theorem, we then prove (see \autoref{thm:compLQT}) that the canonical natural transformation $\BGL_\infty \to \K$ (encoding the aforementioned universal central extension of $\GL_\infty$) induces a morphism $\gl_\infty(A) \to \HC_\bullet^\k(A)$ of homotopical Lie algebras (i.e. a $\mathcal L_\infty$-morphism). Such a morphism corresponds to a morphism of complexes
\[
 \CE_\bullet^{\k}(\gl_\infty(A)) \to \HC_{\bullet-1}^\k(A)
\]
from the Chevalley-Eilenberg homological complex to cyclic homology. We will show that this morphism identifies with the Loday-Quillen-Tsygan generalized trace. In particular, it exhibits $\HC_{\bullet - 1}^\k(A)$ as the kernel of the universal central extension of $\gl_\infty(A)$.
As a consequence, the extensions (\ref{eq:steinberg}) and (\ref{eq:LQTextension}) above are indeed tangent to one another.

\subsection*{Structure of the proof}
The proof of \autoref{thmintro} goes as follows. First, we show in \autoref{subsec:FMPQ} that the tangent complex $\T_{\K(A),0}$ only depends on the (relative) rational $\K$-theory functor $\bK \wedge \Q$.
Using the work of Goodwillie \cite{goodwillie:tangentk}, we know the relative rational 
$\K$-theory functor is equivalent (through the Chern character), with the relative rational cyclic homology functor: $\bK \wedge \Q \simeq \bHC_{\bullet-1}^\Q$.
We get 
\[
 \T_{\K(A),0} := \ellAb(\bK(\Ac_A)) \simeq \ellQ\left(\bHC_{\bullet -1}^\Q(\Ac_A)\right)
\]
where $\bHC^\Q_{\bullet-1}(\Ac_A) \colon \dgArt_\k \to \dgMod_\Q^{\leq 0}$ maps an Artinian dg-algebra $B$ over $\k$ to the connective $\Q$-dg-module $\hofib(\HC_{\bullet -1}^\Q(A \otimes_\k B) \to \HC_{\bullet -1}^\Q(A))$, and where $\ellQ$ is the left adjoint of the fully faithful functor $\dgMod_\k \to \Fct(\dgArt_\k,\dgMod_\Q^{\leq 0})$ mapping $V$ to $B \mapsto (V \otimes_\k \Aug(B)) ^{\leq 0}$.

Since cyclic homology if well defined for non-unital algebras, we also have a functor $\HC_{\bullet-1}^\Q(\bAc_A)$ mapping an Artinian $B$ to the (shifted) cyclic homology of the augmentation ideal $\bAc_A(B) := A \otimes_\k \Aug(B)$.

We then argue that the functor $\ellQ$ is (non-unitally) symmetric monoidal (once restricted to a full subcategory, see \autoref{prop:monoidal}). Since $\ellQ(A \otimes_\k \Aug(-)) = A$, this implies the equivalence $\ellQ \HC_{\bullet-1}^\Q(\bAc_A) \simeq \HC_{\bullet-1}^\k(A)$.

We will then prove (see \autoref{thm:excision}) that the induced morphism
\[
 \ellQ\HC_{\bullet-1}^\Q(\bAc_A) \to^\sim \ellQ\bHC_{\bullet-1}^\Q(\Ac_A)
\]
is a quasi-isomorphism.
We use here the assumption that $A$ is unital (or at least H-unital). This excision statement is close to Wodzicki's excision theorem for cyclic homology \cite{wodzicki:excision}. The structure of our proof relies on a paper by Guccione and Guccione \cite{guccioneguccione}, where the authors give an alternative proof of Wodzicki's theorem.
Nonetheless, our \autoref{thm:excision} is strictly speaking not a consequence of Wodzicki's theorem, and the proof is somewhat more subtle.

Composing those quasi-isomorphisms, we find the announced theorem
\[
 \T_{\K(A),0} := \ellAb(\bK(\Ac_A)) \simeq \ellQ(\bHC_{\bullet-1}^\Q(\Ac_A)) \simeq \ellQ(\HC_{\bullet -1}^\Q(\bAc_A)) \simeq \HC_{\bullet-1}^\k(A).
\]

\subsection*{Possible generalizations}
In this article, we work (mostly for simplicity) over a field $\k$ of characteristic $0$. The results will also hold over any commutative $\Q$-algebra, or, more generally, over any eventually coconnective simplicial $\Q$-algebra\footnote{so that the theory of formal moduli problems would still work flawlessly.}. This can surely also be done over more geometric bases like schemes or stacks (and bounded enough derived versions of such).

A more interesting generalization would be to (try to) work over the sphere spectrum. Since we only need in what follows 'abelian' formal moduli problems, it is not so clear that the characteristic $0$ is necessary. There would, however, be significant difficulties to be overcome, starting with a Wodzicki's excision theorem for topological cyclic homology.

\subsection*{Acknowledgements}
The question answered in this text naturally appeared while working on \cite{faontehennionkapranov:kacmoody} with G. Faonte and M. Kapranov.
A discussion with G. Ginot and M. Zeinalian raised the problem dealt with in our last section. I thank them for the many discussions we had, that led to this question.
I thank D. Calaque, P-G. Plamandon, J. Pridham and M. Robalo for useful discussions on the content of this article.
I of course thank the anonymous referees for their very constructive comments on the first drafts of this article.

Finally, I thank J. Pridham for bringing \cite{pridham:smoothfunctions} to my attention when the first version of our work appeared online. Some arguments used in \cite{pridham:smoothfunctions} are fairly similar to those we use here.

\subsection*{Notations}
From now on, we fix the following notations
\begin{itemize}
 \item Let $\k$ be a field of characteristic $0$.
 \item Let $\MCdgMod_\k$ denote the category of (cohomologically graded) complexes of $\k$-vector spaces. Let $\MCdgMod_\k^{\leq 0}$ be its full subcategory of connective objects (i.e. $V^\bullet$ such that $V^n = 0$ for $n > 0$).
 Let $\dgMod_\k$ and $\dgMod_\k^{\leq 0}$ denote the $\infty$-categories obtained from the above by inverting the quasi-isomorphisms.
 \item Let $\MCdgAlg_\k^{\mathrm{nu}}$ be the category of (possibly non-unital) associative algebras in $\MCdgMod_\k$ (with its usual graded tensor product). We denote by $\MCdgAlg_\k^{\mathrm{nu},\leq 0}$ its full subcategory of connected objects and by $\dgAlg^{\mathrm{nu},\leq 0}_\k \subset \dgAlg_\k^{\mathrm{nu}}$ the associated $\infty$-categories.
 \item For $A \in \MCdgAlg_\k^{\mathrm{nu},\leq 0}$, we denote by $\MCdgBiMod_A^{\mathrm{nu},\leq 0}$ the category of connective $\k$-complexes with a left and a right action of $A$. We denote by $\dgBiMod^{\mathrm{nu},\leq 0}_A$ the associated $\infty$-category\footnote{In the case where $A$ was unital to begin with, the $\infty$-category of connective $A$-bimodules embeds fully faithfully into $\dgBiMod^{\mathrm{nu},\leq 0}_A$. Its image is spanned by those modules on which a unit acts by an equivalence. See \cite[5.4.3.5 and 5.4.3.14]{lurie:halg}.}.
 \item Let $\MCcdga_\k$ denote the category of connective commutative dg-algebras over $\k$. We denote by $\cdga_\k$ its $\infty$-category.
 \item Let $\sSets$ be the $\infty$-category of spaces, $\Sp_{\geq 0}$ the $\infty$-category of connective spectra, and $\infsusp \colon \sSets \rightleftarrows \Sp_{\geq 0} \noloc \infloop$ the adjunction between the infinite suspension and loop space functors.
\end{itemize}

\section{Relative cyclic homology and \texorpdfstring{$\K$}{K}-theory}

In this first section, we will introduce cyclic homology, $\K$-theory and the relative Chern character between them. Most of the content has already appeared in the literature.
The only original fragment is the extension of some of the statements and proofs to simplicial H-unital algebras.

\subsection{Hochschild and cyclic homologies}\label{subsec:cyclic}

\paragraph{Definitions}
Fix an associative dg-algebra $A \in \dgAlg^\mathrm{nu}_\k$. Assuming (for a moment) that $A$ is unital, its Hochschild homology is 
\[
 \HH_\bullet^\k(A) = A \otimes^{\Lcot}_{A \otimes_\k A^o} A.
\]
It comes with a natural action of the circle, and we define its cyclic homology $\HC_\bullet^\k(A)$ to be the (homotopy) coinvariants $\HH_\bullet^\k(A)_{hS^1}$ under this action.
To define those homologies for a non-unital algebra $A$, we first formally add a unit to $A$ and form $A^+ \simeq A \oplus \k$. We then define $F(A) = \operatorname{hocofib}(F(\k) \to F(A^+))$ for $F$ being either $\HH_\bullet^\k$ or $\HC_\bullet^\k$. Those definitions turn out to agree with the former ones when $A$ was already unital.

Unfortunately, we will need later down the road a construction of $\HC_\bullet^\k(A)$ for $A$ non-unital that does not rely on the one for unital algebras. We will therefore work with the following explicit models. We will first define strict functors, and then invert the quasi-isomorphisms.

We fix $A \in \MCdgAlg_\k^{\mathrm{nu}}$ a (not necessarily unital) associative algebra in complexes over $\k$. We also fix $M$ an $A$-bimodule.
Throughout this section, the tensor product $\otimes$ will always refer to the tensor product over $\k$.

 \begin{df}
We call the (augmented) Bar complex of $A$ with coefficient in $M$ and denote by $\Bc_\bullet^\k(A,M)$ the $\oplus$-total complex of the bicomplex
\[
\begin{tikzcd}
   \cdots \arrow[r] & \displaystyle M \otimes_\k A^{\otimes 2} \arrow[r, "-b'"] & \displaystyle M \otimes_\k A \arrow[r,"-b'"] & M \arrow[r] & 0
\end{tikzcd}
\]

with $M$ in degree $0$ and with differential $-b' \colon M \otimes A^{\otimes n} \to M \otimes A^{\otimes n-1}$ given on homogeneous elements by
\[
 b'(a_0 \otimes \cdots \otimes a_n) = \sum_{i = 0}^{n-1} (-1)^{\epsilon_i} a_0 \otimes \cdots \otimes a_i a_{i+1} \otimes \cdots \otimes a_n
\]
where $\epsilon_i = i + \sum_{j<i}|a_j|$ ($|a|$ standing for the degree of the homogeneous element $a$). One easily checks that $b'$ squares to $0$ and commutes with the internal differentials of $A$ and $M$.

We denote by $\Bc_\bullet^\k(A)$ the complex $\Bc_\bullet^\k(A,A)$.
\end{df}

 Assuming $A$ is unital and the right action of $A$ on $M$ is unital, we can build a nullhomotopy of $\Bc_\bullet^\k(A,M) \simeq 0$.
This contractibility does not hold for general non-unital algebras and modules.
\begin{df}[Wodzicki]
 The dg-algebra $A$ as above is called H-unital if $\Bc_\bullet^\k(A) \simeq 0$.
 The $A$-bimodule $M$ is called H-unitary over $A$ if $\Bc_\bullet^\k(A,M) \simeq 0$.
\end{df}

\begin{rmq}
 In the above definitions, we only used the right action of $A$ on $M$. In the original article of Wodzicki, such a module $M$ would be called right H-unitary. A similar notion exists for left modules.
\end{rmq}

\begin{rmq}\label{rmq:MtimesAHunital}
 If $A$ is H-unital, then its right module $M \otimes A$ is H-unitary, for any $M$. Indeed, we then have $\Bc_\bullet^\k(A,M \otimes A) \simeq M \otimes \Bc_\bullet^\k(A) \simeq 0$.
\end{rmq}

\begin{df}
 We denote by $\Hc_\bullet^\k(A,M)$ the $\oplus$-total complex of the bicomplex
 \[
\begin{tikzcd}
   \cdots \arrow[r] & \displaystyle M \otimes_\k A^{\otimes 2} \arrow[r, "b"] & \displaystyle M \otimes_\k A \arrow[r, "b"] & M \arrow[r] & 0
\end{tikzcd}
 \]
with $M$ in degree $0$ and with differential $
 b \colon M \otimes A^{\otimes n} \to M \otimes A^{\otimes n-1}$ given on homogeneous elements by
\[
 b(a_0 \otimes \cdots \otimes a_n) = b'(a_0 \otimes \cdots \otimes a_n) + (-1)^{(|a_n|+1)\epsilon_n} a_na_0 \otimes \cdots \otimes a_{n-1}.
\]
Here also, the differential squares to $0$ and is compatible with the internal differentials of $A$ and $M$.

We denote by $\Hc_\bullet^\k(A)$ the complex $\Hc_\bullet^\k(A,A)$.
\end{df}

 If $A$ is (H-)unital, the complex $\Hc_\bullet^\k(A)$ is the usual Hochschild complex, that computes the Hochschild homology of $A$. For general $A$'s, we need to compensate for the lack of contractibility of the Bar-complex with some extra-term.

 Let $t,N \colon A^{\otimes n+1} \to A^{\otimes n+1}$ by the morphisms given on homogeneous elements by the formulae
  \[
   t(a_0 \otimes \cdots \otimes a_n) = (-1)^{(|a_n|+1)\epsilon_n} a_n \otimes a_0 \otimes \cdots \otimes a_{n-1} \hspace{7mm} \text{and} \hspace{7mm} N = \sum_{i=0}^n t^i.
  \]
One easily checks that they define morphisms of complexes $1-t \colon \Bc_\bullet^\k(A) \to \Hc_\bullet^\k(A)$ and $N \colon \Hc_\bullet^\k(A) \to \Bc_\bullet^\k(A)$. Moreover we have $N(1-t) = 0$ and $(1-t)N = 0$. We can therefore define Hochschild and cyclic homology as follows.

\begin{df}
We define the Hochschild homology $\HH_\bullet^\k(A)$ and the cyclic homology $\HC_\bullet^\k(A)$ of $A$ as the $\oplus$-total complexes of the following bicomplexes
\[
 \begin{tikzcd}[row sep=tiny, column sep=1.9em]
\HH_\bullet^\k(A)\colon ~~ \cdots \arrow[r] & 0 \arrow[r] & 0 \arrow[r] & \Bc_\bullet^\k(A) \arrow[r, "1-t"] & \Hc_\bullet^\k(A) \arrow[r] & 0 \arrow[r] & \cdots 
\\
 \HC_\bullet^\k(A) \colon ~~ \cdots \arrow[r, "N"] & \Bc_\bullet^\k(A) \arrow[r, "1-t"] & \Hc_\bullet^\k(A) \arrow[r, "N"] & \Bc_\bullet^\k(A) \arrow[r, "1-t"] & \Hc_\bullet^\k(A) \arrow[r] & 0 \arrow[r] & \cdots
 \end{tikzcd}
\]
where in both cases, the rightmost $\Hc_\bullet^\k(A)$ is in degree $0$. 
 The Connes exact sequence is the obvious fiber and cofiber sequence
 \[
\begin{tikzcd}
 \HH_\bullet^\k(A) \arrow[r] & \HC_\bullet^\k(A) \arrow[r, "B"] & \HC_\bullet^\k(A)[2]
  \end{tikzcd}
 \]
induced by the above definitions.
\end{df}

\begin{rmq}
 The Hochschild homology of $A$ is the homotopy cofiber of $1-t \colon \Bc_\bullet^\k(A) \to \Hc_\bullet^\k(A)$. In particular, we have a fiber (and cofiber) sequence
 \[
 \begin{tikzcd}
 \Bc_\bullet^\k(A) \arrow[r, "1-t"] & \Hc_\bullet^\k(A) \arrow[r] & \HH_\bullet^\k(A).
 \end{tikzcd}
 \]
 If $A$ is H-unital, then we have $\Hc_\bullet^\k(A) \simeq \HH_\bullet^\k(A)$.
 Moreover, under this assumption, using the reduced Bar complex as a resolution of $A$ as a $A \otimes A^o$-dg-module, we easily show:
 \[
  \HH_\bullet^\k(A) \simeq A \otimes^\Lcot_{A \otimes A^o} A.
 \]
\end{rmq}

\begin{rmq}\label{rmq:HChS}
 The normalization $N$ can be seen as a morphism $N \colon \HH_\bullet^\k(A) \to \HH_\bullet^\k(A)[-1]$, which in turn is an action of (the $\k$-valued homology of) the circle $S^1$ on $\HH_\bullet^\k(A)$.
 Choosing a suitable resolution of $\k$ as an $\homol_\bullet(S^1) := \homol_\bullet(S^1,\k)$-module, we find
 \[
\HC_\bullet^\k(A) = {\HH_\bullet^\k(A)}_{hS^1} = \HH_\bullet^\k(A) \otimes^\Lcot_{\homol_\bullet(S^1)} \k.
 \]
\end{rmq}

\paragraph{Relation to Chevalley-Eilenberg homology}\label{par:tracemap}

Cyclic homology is sometimes referred to as additive $\K$-theory for the following reason: it is related to (the homology of) the Lie algebra $\gl_\infty(A)$ of finite matrices the same way $\K$-theory is related to (the homology of) the group $\GL_\infty(A)$.

More specifically, for $A$ a dg-algebra, the generalized trace map is a morphism
\[
 \Tr \colon \CE^\k_\bullet(\gl_\infty(A)) \to \HC_\bullet^\k(A) [1]
\]
used by Loday-Quillen \cite{lodayquillen} and Tsygan \cite{tsygan:HC} to prove (independently) the following statement for $A$ a discrete algebra, and by Burghelea \cite{burghelea:cyclicK} for general dg-algebras.
\begin{thm}[Loday-Quillen, Tsygan, Burghelea]\label{thm:LQT}
 When $A$ is unital, the morphism $\Tr$ induces an equivalence of Hopf algebras
 \[
  \CE^\k_\bullet(\gl_\infty(A)) \simeq \Sym_\k(\HC_\bullet^\k(A)[1]),
 \]
 where the product on the left-hand-side is given by the direct sum of matrices.
\end{thm}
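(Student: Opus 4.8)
The plan is to follow the classical strategy of Loday and Quillen, organising the computation around the Hopf algebra structure and the Milnor--Moore theorem so that everything reduces to identifying the primitives, and only then bringing in the trace map.

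First I would make the Hopf algebra structure explicit. The block-sum inclusions $\gl_n \oplus \gl_m \to \gl_{n+m}$ induce, after passing to $\gl_\infty(A) = \colim_n \gl_n(A)$, a product on $\CE_\bullet^\k(\gl_\infty(A))$, and the diagonal $\gl_\infty \to \gl_\infty \oplus \gl_\infty$ gives a coproduct. On $H_\bullet := \homol_\bullet \CE_\bullet^\k(\gl_\infty(A))$ this makes $H_\bullet$ a graded-connected, graded-commutative and graded-cocommutative Hopf algebra over $\k$. Since $\mathrm{char}(\k)=0$, the Cartier--Milnor--Moore structure theorem identifies such a Hopf algebra as $U$ of its primitives, and commutativity forces the primitive Lie algebra to be abelian, whence a canonical isomorphism $H_\bullet \simeq \Sym_\k(\mathrm{Prim}\, H_\bullet)$. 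This reduces the theorem to (i) identifying $\mathrm{Prim}\, H_\bullet$ and (ii) checking that $\Tr$ realises the isomorphism.

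Next I would identify $\mathrm{Prim}\, H_\bullet$ with $\HC_\bullet^\k(A)[1]$. The generalised trace $\Tr$ is a morphism of differential graded coalgebras, where $\HC_\bullet^\k(A)[1]$ carries the trivial (primitively generated) comultiplication; hence $\Tr$ vanishes on decomposables, and it suffices to show its restriction $\mathrm{Prim}\, H_\bullet \to \HC_\bullet^\k(A)[1]$ is an equivalence. Here the essential input is invariant theory: writing $\gl_n(A) = M_n(\k) \otimes_\k A$ and filtering $\Lambda^\bullet \gl_n(A)$ by exterior degree, the associated graded is governed by the $\gl_n$-(co)invariants of $M_n(\k)^{\otimes p} \otimes A^{\otimes p}$. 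The first fundamental theorem of invariant theory for $\gl_n$ describes these invariants, in the stable range $n \to \infty$, by trace monomials indexed by permutations; the single-cycle (connected) monomials assemble precisely into the cyclic bicomplex computing $\HC_\bullet^\k(A)$, while disconnected monomials become decomposable. Matching differentials then shows that $\Tr$ induces $\mathrm{Prim}\, H_\bullet \simeq \HC_\bullet^\k(A)[1]$, and combining with step one gives the desired equivalence of Hopf algebras.

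For the passage from discrete to dg-algebras (the case due to Burghelea) I would observe that both $A \mapsto \CE_\bullet^\k(\gl_\infty(A))$ and $A \mapsto \Sym_\k(\HC_\bullet^\k(A)[1])$ send quasi-isomorphisms to equivalences and commute with the relevant colimits (filtered colimits and geometric realisations, using that in characteristic $0$ the functors $\Sym^n$ and $\Lambda^n$ preserve sifted colimits), and that $\Tr$ is natural; one then bootstraps from the discrete case by resolving $A$ through a simplicial diagram of free discrete algebras, or equivalently runs the exterior-degree filtration spectral sequence directly over the dg base. The hard part is step three: making the stabilisation precise — controlling the permutation-indexed trace monomials, showing the disconnected ones die in the primitive quotient, and carrying out the sign and grading bookkeeping faithfully in the dg setting — is where the real work lies, everything else being formal once the stable $\gl_n$-invariants are recognised as the cyclic complex.
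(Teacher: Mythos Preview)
The paper does not prove this theorem: it is stated as a classical result and attributed to Loday--Quillen, Tsygan (for discrete $A$) and Burghelea (for dg-algebras), with references but no argument. So there is no proof in the paper to compare your proposal against.

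That said, your sketch is a faithful outline of the classical Loday--Quillen argument: reduce to primitives via Milnor--Moore, then identify the primitives with cyclic homology using stable $\gl_n$-invariant theory, and finally extend to dg-algebras by a sifted-colimit/resolution argument. The structure is correct and the identification of the ``hard part'' (the invariant-theory computation and its bookkeeping) is accurate. One small caution: your extension to dg-algebras via geometric realisations of free discrete algebras is closer in spirit to a simplicial argument than to Burghelea's original treatment, but it works in characteristic $0$ for the reasons you state.
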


\subsection{Filtrations, relative homologies and Wodzicki's excision theorem}\label{subsec:wodzicki}

\begin{df}
 Let $f \colon A \to B$ be a map of (possibly non-unital) connective\footnote{For simplicity, we restrict ourselves to the connective case. The general case -- unneeded for our purposes -- would work similarly.} dg-algebras.
 The relative Hochschild (resp. cyclic) homology of $A$ over $B$ is the homotopy fiber
 \begin{align*}
  \bHH_\bullet^\k(f) &:= \hofib( \HH_\bullet^\k(A) \to \HH_\bullet^\k(B)) \hspace{1.5em} \\ \text{\big(resp. } \bHC_\bullet^\k(f) &:= \hofib(\HC_\bullet^\k(A) \to \HC_\bullet^\k(B)) \text{ \big)}.
 \end{align*}
 If $I$ denotes the homotopy fiber of $f$ endowed with its induced (non-unital) algebra structure, we have canonical morphisms
 \[
  \eta_\HH \colon \HH_\bullet^\k(I) \to \bHH_\bullet^\k(f) \hspace{1em}\text{ and } \hspace{1em} \eta_\HC \colon \HC_\bullet^\k(I) \to \bHC_\bullet^\k(f).
 \]

\end{df}

\begin{thm}[Wodzicki]\label{thm:wodzicki}
 Let $I \to A \to B$ be an extension of (possibly non-unital) connective dg-algebras. 
 If $I$ is H-unital, then the induced sequences
 \[
 \begin{tikzcd}[row sep=tiny]
   \HH_\bullet^\k(I) \arrow[r] & \HH_\bullet^\k(A) \arrow[r] & \HH_\bullet^\k(B) \\
   \HC_\bullet^\k(I) \arrow[r] & \HC_\bullet^\k(A) \arrow[r] & \HC_\bullet^\k(B)
 \end{tikzcd}
 \]
 are fiber and cofiber sequences. In other words, the canonical morphisms $\eta_\HH$ and $\eta_\HC$ are equivalences.
\end{thm}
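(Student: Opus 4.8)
The plan is to treat the Hochschild statement first and then deduce the cyclic one formally. The reduction of the cyclic case to the Hochschild case is the easy half: by \autoref{rmq:HChS} we have $\HC_\bullet^\k(-) \simeq \HH_\bullet^\k(-)_{hS^1}$, and the circle action is implemented naturally (through $N$), so the comparison morphism $\HH_\bullet^\k(I) \to \hofib(\HH_\bullet^\k(A) \to \HH_\bullet^\k(B))$ is $S^1$-equivariant. As $(-)_{hS^1}$ is a colimit it is exact, hence commutes with the formation of (co)fibers; thus once $\eta_\HH$ is known to be an equivalence, applying $(-)_{hS^1}$ shows that $\eta_\HC$ is an equivalence as well. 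Everything therefore reduces to Hochschild excision.

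For the Hochschild case, I would first note that, working over a field, the surjection $A \to B$ is degreewise split, so $\Bc_\bullet^\k(A) \to \Bc_\bullet^\k(B)$ and $\Hc_\bullet^\k(A) \to \Hc_\bullet^\k(B)$ are degreewise-split surjections of complexes; write $\bBc$ and $\widebar{\Hc}$ for their kernels. Since $\HH_\bullet^\k(-)$ is the cone of $1-t \colon \Bc_\bullet^\k(-) \to \Hc_\bullet^\k(-)$, the relative homology $\bHH_\bullet^\k(f)$ is the cone of $1-t \colon \bBc \to \widebar{\Hc}$, while $\HH_\bullet^\k(I)$ is the cone of $1-t \colon \Bc_\bullet^\k(I) \to \Hc_\bullet^\k(I)$. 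By functoriality of the cone, it suffices to show that the canonical inclusions
\[
 \Bc_\bullet^\k(I) \to \bBc \qquad\text{and}\qquad \Hc_\bullet^\k(I) \to \widebar{\Hc}
\]
are quasi-isomorphisms.

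The heart of the matter is this last claim, and here is where H-unitality enters. Choosing a $\k$-linear section of $A \to B$ identifies $A \cong I \oplus \hat B$ as graded vector spaces, and one filters $\bBc$ (and $\widebar{\Hc}$) by the number of tensor factors lying in $\hat B$. Because $I$ is an ideal, every product $a_ia_{i+1}$ involving a factor of $I$ again lands in $I$, so both $b'$ and $b$ decrease this count; the filtration is thus preserved by the differential, and (the algebras being connective) it is finite in each total degree, so the associated spectral sequence converges. On the associated graded the only surviving multiplications are those among adjacent $I$-factors: each maximal run of $I$'s between two consecutive $\hat B$-slots (on a line for $\bBc$, on a circle for $\widebar{\Hc}$) contributes a reduced bar complex $\Bc_\bullet^\k(I)$, while the $\hat B$-factors contribute inert tensor slots. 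By H-unitality $\Bc_\bullet^\k(I) \simeq 0$, so every graded piece carrying at least one $\hat B$-factor is acyclic, leaving exactly $\Bc_\bullet^\k(I)$ (resp. $\Hc_\bullet^\k(I)$). This is precisely the content of the Guccione--Guccione filtration argument.

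The main obstacle is this graded-piece computation: one must set up the filtration and its associated graded cleanly, identify each piece as a tensor product of copies of $B$ with reduced bar complexes of $I$ (taking due care of the empty runs, which contribute the unit $\k$, and of the cyclic wraparound term $a_na_0$ present in $b$ but absent from $b'$), and invoke H-unitality to kill all pieces with a positive number of $\hat B$-factors. The convergence of the spectral sequence, guaranteed by connectivity, then upgrades this vanishing on graded pieces to the desired quasi-isomorphisms, and the $S^1$-equivariant argument above transports the conclusion from Hochschild to cyclic homology.
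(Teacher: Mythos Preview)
Your overall strategy is sound and the reduction to the two quasi-isomorphisms $\Bc_\bullet^\k(I)\to\bBc$ and $\Hc_\bullet^\k(I)\to\widebar{\Hc}$ is exactly what the paper does as well. The route you take to those two statements, however, is different from the paper's, and there is one place where your sketch underestimates the difficulty.

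\textbf{On the filtration.} What you call ``the Guccione--Guccione filtration'' (grading by the number of $\hat B$-factors after choosing a $\k$-linear splitting $A\cong I\oplus\hat B$) is closer to Wodzicki's original argument. The paper follows Guccione--Guccione, whose filtrations are different: one first interpolates from $\Hc_\bullet^\k(I,M)$ to $\Hc_\bullet^\k(A,M)$ by letting the algebra slots become $A$ one at a time (the filtration $\Fc^n$, \autoref{lem:filtrationF}), and separately interpolates from $\Hc_\bullet^\k(A,B)$ to $\Hc_\bullet^\k(B)$ by letting the algebra slots become $B$ one at a time (the filtration $\Qc^n$, \autoref{lem:kernelsfiltration}). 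The proof then passes through the intermediate complexes $\Hc_\bullet^\k(A,I)$ and $\Hc_\bullet^\k(A,B)$, using that $\Hc_\bullet^\k(A,-)$ is exact in the coefficient. No splitting of $A\to B$ is chosen.

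\textbf{The subtlety you glide over.} For $\bBc$ your description of the graded pieces is correct: each run of $I$'s contributes a factor $T=\k\oplus\Bc_\bullet^\k(I)[\text{shift}]$, the constraint ``at least one $I$'' removes the all-empty summand, and H-unitality collapses everything. For $\widebar{\Hc}$ the picture ``runs on a circle, each contributing an independent bar complex'' is not accurate. The surviving cyclic term $a_\ell a_0$ (when both ends lie in $I$) multiplies an element of the \emph{last} run with an element of the \emph{first} run; the result sits in the first run while the last run shortens. So the runs are genuinely coupled and $\mathrm{gr}^n(\widebar{\Hc})$ is not a tensor product of bar complexes. Acyclicity still holds, but one needs a further filtration (e.g.\ by the length of the rightmost run, giving a first-quadrant double complex) and a short spectral-sequence argument. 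This is exactly the kind of computation the paper's two-step filtration avoids: by always keeping a single distinguished coefficient slot $M$, the graded pieces in \autoref{lem:filtrationF} and \autoref{lem:kernelsfiltration} come out directly as tensor products with a single $\Bc_\bullet^\k(I,M)$ factor, with no cyclic mixing. That cleanliness is also what lets the paper reuse the very same filtrations in the more delicate functorial setting of \autoref{prop:alphaelleq}.

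Your reduction of the cyclic statement to the Hochschild one via $(-)_{hS^1}$ is fine; the paper instead deduces both $\HH$ and $\HC$ directly from the $\Bc$/$\Hc$ statements, which is an equivalent but slightly more symmetric packaging.
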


Wodzicki proves in \cite{wodzicki:excision} the above theorem in the case where $A$, $B$ and $I$ are concentrated in degree $0$. If his proof should be generalizable to connective dg-algebras, some computations seem to become tedious. Fortunately, Guccione and Guccione published in \cite{guccioneguccione} another proof of this result, that is very easily generalizable to connective dg-algebras. There is also a more recent article of Donadze and Ladra \cite{DonadzeLadra} proving this result for simplicial algebras.

We will actually not need \autoref{thm:wodzicki} in what follows. We will, however, need to reproduce some steps of its proof in a more complicated situation. 
In this subsection, we will give a short proof of \autoref{thm:wodzicki}, where we have isolated the statements to be used later. 
The proof follows closely the work of \cite{guccioneguccione}.

We fix $f \colon A \to B$ a degree-wise surjective morphism in $\MCdgAlg_\k^{\mathrm{nu},\leq 0}$. We denote by $I$ the kernel of $f$. Let $M \in \MCdgBiMod^{\mathrm{nu},\leq 0}_A$.

\paragraph{A filtration from $\Bc_\bullet^\k(I,M)$ to $\Bc_\bullet^\k(A,M)$ and from $\Hc_\bullet^\k(I,M)$ to $\Hc_\bullet^\k(A,M)$}
We introduce a filtration on both $\Bc_\bullet^\k(A,M)$ and $\Hc_\bullet^\k(A,M)$. Those filtrations were originally found in \cite{guccioneguccione}.
\label{par:filtrationF}

Fix $n \in \N$. Let $p \in \N$. We set 
\[
  M^n_p = \begin{cases}
  M \otimes A^p & \text{if } p \leq n \\
  M \otimes A^{\otimes n} \otimes I^{\otimes p-n} & \text{if } p > n.
  \end{cases}
\]
We denote by $\Fc_{\Bc_\bullet^\k}^n(f,M)$ (resp. $\Fc_{\Hc_\bullet^\k}^n(f,M)$) the subcomplex of $\Bc_\bullet^\k(A,M)$ (resp. $\Hc_\bullet^\k(A,M)$) given as the total complex of the bicomplex
\begin{align*}
  \Fc_{\Bc_\bullet^\k}^n(f,M) \colon& \hspace{1em} \cdots \lra M^n_p \lra^{-b'} M^n_{p-1} \lra^{-b'} \cdots \lra^{-b'} M^n_1 \lra^{-b'} M^n_0 \lra 0 \\
  \text{\big(resp. }\Fc_{\Hc_\bullet^\k}^n(f,M) \colon& \hspace{1em} \cdots \lra M^n_p \lra^{b} M^n_{p-1} \lra^{b} \cdots \lra^{b} M^n_1 \lra^{b} M^n_0 \lra 0 \text{ \big). }
\end{align*}
In particular, we have 
\begin{align*}
\Fc_{\Bc_\bullet^\k}^0(f,M) &= \Bc_\bullet^\k(I,M) \text{, }& \Bc_\bullet^\k(A,M) &\simeq \colim_n \Fc_{\Bc_\bullet^\k}^n(f,M) \\
\Fc_{\Hc_\bullet^\k}^0(f,M) &= \Hc_\bullet^\k(I,M) \text{ \hspace{1em}and }& \Hc_\bullet^\k(A,M) &\simeq \colim_n \Fc_{\Hc_\bullet^\k}^n(f,M).
\end{align*}
\begin{lem}\label{lem:filtrationF}
 The quotients $\quot{\Fc_{\Bc_\bullet^\k}^{n+1}(f,M)}{\Fc_{\Bc_\bullet^\k}^n(f,M)}$ and $\quot{\Fc_{\Hc_\bullet^\k}^{n+1}(f,M)}{\Fc_{\Hc_\bullet^\k}^n(f,M)}$ are isomorphic to the complex
 \[
  A^{\otimes n} \otimes B \otimes \Bc_\bullet^\k(I,M)[n+1].
 \]
\end{lem}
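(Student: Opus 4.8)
The plan is to establish the isomorphism one homological degree at a time and then match the induced differentials. First I would compute the associated graded modules. For $p \leq n$ the comparison map $\Fc^n \hookrightarrow \Fc^{n+1}$ is an isomorphism in degree $p$, so the quotient vanishes there; for $p = n+1+q$ with $q \geq 0$, the inclusion $M^n_p \hookrightarrow M^{n+1}_p$ is induced by $I \hookrightarrow A$ in the single $(n+1)$-st tensor slot, whence
\[
 M^{n+1}_p \big/ M^n_p \;\cong\; M \otimes A^{\otimes n} \otimes (A/I) \otimes I^{\otimes q}.
\]
Since $f$ is degree-wise surjective with kernel $I$, we have $A/I \cong B$, so this is $M \otimes A^{\otimes n} \otimes B \otimes I^{\otimes q}$. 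Re-indexing by $q = p-n-1$ and permuting the tensor factors (carrying the Koszul signs) so as to place the module factor $M$ next to the block of $I$-factors identifies the associated graded, as a graded module, with $A^{\otimes n} \otimes B \otimes \Bc_\bullet^\k(I,M)[n+1]$; the shift $[n+1]$ records the offset between $p$ and $q$, and the base case $n=0$ recovers the identity $\Fc^0 = \Bc_\bullet^\k(I,M)$.

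Next I would identify the differential. The guiding principle is that the filtration measures the number of tensor slots taking values in the whole of $A$ rather than in the ideal $I$: whenever $-b'$ (resp.\ $b$) multiplies two adjacent factors so as to merge two $A$-slots, or to merge an $A$-slot with the module factor or with an $I$-slot, the resulting term has strictly fewer $A$-slots, hence lies in $\Fc^n$ and dies in the quotient. The only terms that preserve the count, and therefore survive, are the products internal to the consecutive block of $I$-factors, together with the action on $M$ of the $I$-factor adjacent to it. Collecting these surviving terms, one sees that they assemble precisely into the bar differential of $\Bc_\bullet^\k(I,M)$, with $A^{\otimes n} \otimes B$ playing the role of constant coefficients; this gives the claimed isomorphism of complexes in the Bar case.

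The main obstacle is exactly this differential computation, and in particular checking that the two quotients $\Fc_{\Bc_\bullet^\k}^{n+1}/\Fc_{\Bc_\bullet^\k}^{n}$ and $\Fc_{\Hc_\bullet^\k}^{n+1}/\Fc_{\Hc_\bullet^\k}^{n}$ land on the \emph{same} complex. For the Hochschild filtration the differential $b$ carries the extra cyclic term $b-b'$, and I would verify that, modulo $\Fc^n$, this term either drops the filtration like the others or contributes exactly the module-action summand needed to reconstitute $-b'$, so that the surviving differential again reduces to that of $\Bc_\bullet^\k(I,M)$. The delicate points are purely combinatorial: one must confirm that the Koszul signs generated by the factor-permutation combine with the sign conventions in $-b'$, $b$ and the shift $[n+1]$ to reproduce the bar differential on the nose rather than merely up to an overall sign. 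Once the surviving terms and their signs are pinned down, the isomorphism is immediate and uniform in $M$.
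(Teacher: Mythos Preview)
Your approach is exactly the paper's: compute the graded pieces $M^{n+1}_p/M^n_p$ and then identify the induced differential. The graded computation is correct, and you have correctly isolated the delicate step.

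There is, however, a slip in your account of which terms survive in the Bar case. You say the survivors are the internal $I$-products ``together with the action on $M$ of the $I$-factor adjacent to it'', but in the ordering $m \otimes a_1 \otimes \cdots \otimes a_{n+1} \otimes x_{n+2} \otimes \cdots \otimes x_p$ the module $M$ sits in slot $0$, separated from the $I$-block by the entire $A$-block; there is no $I$-factor adjacent to $M$. Concretely: the terms $i \le n$ of $b'$ land in $M^n_{p-1}$ because after the merge position $n+1$ is the old $x_{n+2}\in I$; the term $i=n+1$ also lands in $M^n_{p-1}$ because $a_{n+1}x_{n+2}\in I$; hence for $-b'$ only the internal products $i\ge n+2$ survive, and \emph{no} module-action term appears. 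The module-action is supplied only in the Hochschild case, by the cyclic summand of $b$: the term $\pm x_p\cdot m$ (left action of $I$ on $M$) lands in $M^{n+1}_{p-1}$ and survives. So the Bar and Hochschild quotients do not acquire the $M$-action in the same way; your hedged alternative for $b-b'$ (``contributes exactly the module-action summand'') is the one that actually occurs.

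This is precisely what the paper records in its last sentence: after cyclically rewriting the quotient as $A^{\otimes n}\otimes B\otimes I^{\cdots}\otimes M$, it asserts that the induced differential is $\id_{A^{\otimes n}\otimes B}\otimes(-b')$ in the Bar case and $\id_{A^{\otimes n}\otimes B}\otimes b'$ in the Hochschild case. Your plan is on the right track, but you should redo the survivor analysis with $M$ placed at the end (adjacent to the $I$-block) rather than at the front, and keep separate track of the left action furnished by the cyclic term; only then do the two cases line up as $\pm b'$ on the same underlying complex.
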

\begin{proof}
 We have $\quot{M^{n+1}_p}{M^n_p} \simeq 0$ if $p \leq n$ and $\quot{M^{n+1}_p}{M^n_p} \simeq A^{\otimes n} \otimes B \otimes I^{p-n} \otimes M$ else.
 A rapid computation shows that in the induced differential
 \[
  A^{\otimes n} \otimes B \otimes I^{p+1-n} \otimes M \lra A^{\otimes n} \otimes B \otimes I^{p-n} \otimes M
 \]
 is equal to $\id_{A^{\otimes n} \otimes B} \otimes (-b')$ in the case of $\quot{\Fc_{\Bc_\bullet^\k}^{n+1}(f,M)}{\Fc_{\Bc_\bullet^\k}^n(f,M)}$ and to $\id_{A^{\otimes n} \otimes B} \otimes b'$ in the case of $\quot{\Fc_{\Hc_\bullet^\k}^{n+1}(f,M)}{\Fc_{\Hc_\bullet^\k}^n(f,M)}$.
\end{proof}

\begin{cor}\label{cor:HIHA}
If $M$ is H-unitary as an $I$-bimodule, then
\[
 0 \simeq \Bc_\bullet^\k(I,M) \simeq \Bc_\bullet^\k(A,M) \hspace{2em} \text{and} \hspace{2em} \Hc_\bullet^\k(I,M) \simeq \Hc_\bullet^\k(A,M).
\]
\end{cor}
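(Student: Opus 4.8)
The plan is to deduce everything from the filtration recalled just before \autoref{lem:filtrationF}, together with that lemma, so that essentially no new computation is needed. First I would note that the leftmost equivalence $0 \simeq \Bc_\bullet^\k(I,M)$ is nothing but the hypothesis that $M$ is H-unitary over $I$; there is nothing to prove there, and the real content is propagating this acyclicity up the filtration from $I$ to $A$.

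Next I would invoke \autoref{lem:filtrationF}, which identifies each successive quotient of both the Bar filtration $\Fc_{\Bc_\bullet^\k}^\bullet(f,M)$ and the Hochschild filtration $\Fc_{\Hc_\bullet^\k}^\bullet(f,M)$ with $A^{\otimes n} \otimes B \otimes \Bc_\bullet^\k(I,M)[n+1]$. Since $\k$ is a field, the functor $A^{\otimes n} \otimes B \otimes (-)$ is exact and hence preserves quasi-isomorphisms; as $\Bc_\bullet^\k(I,M) \simeq 0$ by hypothesis, every such quotient is acyclic. From the long exact sequence attached to each short exact sequence
\[
 0 \to \Fc^n \to \Fc^{n+1} \to \quot{\Fc^{n+1}}{\Fc^n} \to 0,
\]
I then conclude that every inclusion $\Fc^n \hookrightarrow \Fc^{n+1}$ is a quasi-isomorphism, for both filtrations.

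Finally I would pass to the colimit. For the Bar complex the base of the filtration is $\Fc_{\Bc_\bullet^\k}^0(f,M) = \Bc_\bullet^\k(I,M)$, which is already acyclic, so the previous step gives $\Fc_{\Bc_\bullet^\k}^n(f,M) \simeq 0$ for every $n$, whence $\Bc_\bullet^\k(A,M) \simeq \colim_n \Fc_{\Bc_\bullet^\k}^n(f,M) \simeq 0$. For the Hochschild complex the composite
\[
 \Hc_\bullet^\k(I,M) = \Fc_{\Hc_\bullet^\k}^0(f,M) \to \colim_n \Fc_{\Hc_\bullet^\k}^n(f,M) = \Hc_\bullet^\k(A,M)
\]
is a countable composition of quasi-isomorphisms, hence itself a quasi-isomorphism.

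The only step deserving care — and the sole place I expect any friction — is this passage to the colimit. I must use the identifications $\Bc_\bullet^\k(A,M) \simeq \colim_n \Fc_{\Bc_\bullet^\k}^n(f,M)$ and $\Hc_\bullet^\k(A,M) \simeq \colim_n \Fc_{\Hc_\bullet^\k}^n(f,M)$ recorded with the filtration, together with the fact that a sequential (filtered) colimit of acyclic complexes of $\k$-vector spaces is again acyclic — equivalently, that homology commutes with filtered colimits over a field. Both facts are standard, so the corollary should follow without genuine difficulty once \autoref{lem:filtrationF} is available.
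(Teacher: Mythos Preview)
Your proposal is correct and is precisely the argument the paper has in mind; indeed, the paper states the corollary without proof immediately after \autoref{lem:filtrationF}, treating it as the evident consequence of that lemma via the filtration. Your write-up simply spells out what the paper leaves implicit.
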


\begin{cor}\label{cor:BmodHunital}
Let $N$ be a connective left $B$-dg-module.
 If $I$ is H-unital then
 \[
  \Hc_\bullet^\k(A, N \otimes I) \simeq \Hc_\bullet^\k(I, N \otimes I) = \Bc_\bullet^\k(I, N \otimes I) \simeq 0.
 \]
\end{cor}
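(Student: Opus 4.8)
The plan is to deduce the three relations from the (a priori) $A$-bimodule structure carried by $N \otimes I$, together with \autoref{rmq:MtimesAHunital} and \autoref{cor:HIHA}. First I would record this structure: since $N$ is a left $B$-module, the map $f \colon A \to B$ makes $N$ a left $A$-module, while $I$, being a two-sided ideal, is a right $A$-module; tensoring, $N \otimes I$ becomes an $A$-bimodule, with $A$ acting on the left through $N$ (via $f$) and on the right through $I$. The crucial observation --- really the whole content of the statement --- is that restricting the left action along $I \hookrightarrow A$ forces it to factor through $f(I) = 0$, so the left $I$-action on $N \otimes I$ is trivial.

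With this in hand the three relations fall out in turn. For the last equivalence I would view $N \otimes I$ as a right $I$-module via right multiplication on the $I$-factor; it is then of the shape covered by \autoref{rmq:MtimesAHunital} (with $I$ and $N$ in place of $A$ and $M$), so $\Bc_\bullet^\k(I, N \otimes I) \simeq N \otimes \Bc_\bullet^\k(I) \simeq 0$, using that $I$ is H-unital. In particular $N \otimes I$ is H-unitary over $I$, which is precisely the hypothesis needed to apply \autoref{cor:HIHA}; this gives the first equivalence $\Hc_\bullet^\k(A, N \otimes I) \simeq \Hc_\bullet^\k(I, N \otimes I)$. Finally, for the middle identification I would compare the two differentials directly: the Hochschild differential $b$ differs from the Bar differential only by the cyclic wrap-around term $a_n a_0 \otimes \cdots \otimes a_{n-1}$, which is built from the left action of $I$ on the module; since that action vanishes, the extra term disappears and the complexes $\Hc_\bullet^\k(I, N \otimes I)$ and $\Bc_\bullet^\k(I, N \otimes I)$ coincide.

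I do not expect a serious obstacle: once the bimodule structure is fixed, the argument is a short assembly of the two cited results. The only point deserving a line of care is the middle equality. With the sign conventions of the two definitions the Bar differential is $-b'$ whereas $b$ reduces to $b'$ once the wrap-around term drops, so the identification is literal only up to the overall sign isomorphism acting by $(-1)^p$ in homological degree $p$ --- a harmless rescaling that I would either absorb into the conventions or flag explicitly. The genuinely load-bearing step is the triviality of the left $I$-action, so I would state the $A$-bimodule structure on $N \otimes I$ unambiguously before doing anything else.
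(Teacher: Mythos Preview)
Your proposal is correct and follows essentially the same route as the paper: the paper invokes \autoref{cor:HIHA} for the first equivalence, the triviality of the left $I$-action on $N$ for the middle equality, and \autoref{rmq:MtimesAHunital} together with H-unitality of $I$ for the last one. Your treatment of the sign discrepancy between $-b'$ and $b'$ is a point the paper glosses over with its ``$=$'' sign, so your extra care there is warranted rather than superfluous.
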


\begin{proof}
 The first equivalence is an application of the above corollary. The equality follows from the trivial observation that the left action if $I$ on $N$ is trivial. The last equivalence is implied by the fact that $I$ is H-unital, together with \autoref{rmq:MtimesAHunital}.
\end{proof}

\paragraph{A filtration from $\Bc_\bullet^\k(A,B)$ to $\Bc_\bullet^\k(B)$ and from $\Hc_\bullet^\k(A,B)$ to $\Hc_\bullet^\k(B)$}\label{par:quotfiltration}
For $n,p \in \N$, we set
\[
 N^n_p := \begin{cases}
           B^{\otimes p+1} & \text{if } p \leq n \\
           B^{\otimes n+1} \otimes A^{\otimes p-n} & \text{else.}
          \end{cases}
\]
We define $\Qc_{\Bc_\bullet^\k}^n(f)$ (resp. $\Qc_{\Hc_\bullet^\k}^n(f)$) as the quotient of $\Bc_\bullet^\k(A,B)$ (resp. of $\Hc_\bullet^\k(A,B)$) given by
\begin{align*}
  \Qc_{\Bc_\bullet^\k}^n(f) \colon& \hspace{1em} \cdots \lra N^n_p \lra^{-b'} N^n_{p-1} \lra^{-b'} \cdots \lra^{-b'} N^n_1 \lra^{-b'} N^n_0 \lra 0 \\
  \text{\big(resp. }\Qc_{\Hc_\bullet^\k}^n(f) \colon& \hspace{1em} \cdots \lra N^n_p \lra^{b} N^n_{p-1} \lra^{b} \cdots \lra^{b} N^n_1 \lra^{b} N^n_0 \lra 0 \text{ \big). }
\end{align*}

We have $\Qc^0_{\Bc_\bullet^\k}(f) \simeq \Bc_\bullet^\k(A,B)$ and $\Qc_{\Hc_\bullet^\k}^0(f) \simeq \Hc_\bullet^\k(A,B)$, as well as
\[
 \colim_n \Qc_{\Bc_\bullet^\k}^n(f) \simeq \Bc_\bullet^\k(B) \hspace{2em} \text{and}
 \hspace{2em}  \colim_n \Qc_{\Hc_\bullet^\k}^n(f) \simeq \Hc_\bullet^\k(B).
\]

\begin{lem}\label{lem:kernelsfiltration}
 For any $n \in \N$, we have:
 \begin{align*}
  \ker\left(\Qc^n_{\Bc_\bullet^\k}(f) \to \Qc^{n+1}_{\Bc_\bullet^\k}(f)\right) &\simeq \Bc_\bullet^\k(A, B^{\otimes n+1} \otimes I)[n+1]\\
  \ker\left(\Qc^n_{\Hc_\bullet^\k}(f) \to \Qc^{n+1}_{\Hc_\bullet^\k}(f)\right) &\simeq \Hc_\bullet^\k(A, B^{\otimes n+1} \otimes I)[n+1].
 \end{align*}
\end{lem}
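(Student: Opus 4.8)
The plan is to produce the isomorphism degreewise and then match differentials, in close analogy with the proof of \autoref{lem:filtrationF}. First I would unwind the transition map. By construction $\Qc_{\Bc_\bullet^\k}^n(f)$ is the quotient of $\Bc_\bullet^\k(A,B)$ obtained by applying $f$ to the first $n$ of the $A$-factors (retaining the module factor $B$ and the remaining factors in $A$), so the canonical surjection $\Qc_{\Bc_\bullet^\k}^n(f) \to \Qc_{\Bc_\bullet^\k}^{n+1}(f)$ merely applies $f$ to one further factor. In degree $p$ it is therefore an isomorphism when $p \leq n$, and equals $\id_{B^{\otimes n+1}} \otimes f \otimes \id_{A^{\otimes p-n-1}}$ when $p > n$. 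Since $\k$ is a field, tensoring is exact and $I = \ker f$, so the kernel vanishes for $p \leq n$ and equals $B^{\otimes n+1} \otimes I \otimes A^{\otimes p-n-1}$ for $p \geq n+1$. This is exactly the degree-$p$ term of $\Bc_\bullet^\k(A, B^{\otimes n+1} \otimes I)[n+1]$; since $\Qc_{\Bc_\bullet^\k}^n(f)$ and $\Qc_{\Hc_\bullet^\k}^n(f)$ share the same underlying graded object $N^n_p$, the same computation gives the Hochschild case.

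It remains to check that the differential induced on the kernel from $\Bc_\bullet^\k(A,B)$ (resp. $\Hc_\bullet^\k(A,B)$) is the one of $\Bc_\bullet^\k(A, B^{\otimes n+1} \otimes I)$ (resp. $\Hc_\bullet^\k(A, B^{\otimes n+1} \otimes I)$). Here $B^{\otimes n+1} \otimes I$ carries its natural $A$-bimodule structure, with $A$ acting on the left on the leftmost factor $B$ through $f$ and on the right on the factor $I$ (legitimate as $I$ is a two-sided ideal). The key observation is that, evaluating $-b'$ on a lift $\beta_0 \otimes \cdots \otimes \beta_n \otimes x \otimes \alpha_2 \otimes \cdots$ of a kernel element with $x \in I$ in the $(n+1)$-st slot, every summand that multiplies two of the first $n+1$ factors together, or that multiplies the last $B$-factor into $x$, yields an element whose $(n+1)$-st slot again lies in $I$; such terms are killed in $\Qc_{\Bc_\bullet^\k}^n(f)$ because $f|_I = 0$. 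Only the summands multiplying $x$ (or a later factor) with its right neighbour survive, and these are precisely $-b'$ for the module $B^{\otimes n+1} \otimes I$.

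The Hochschild statement follows by the same bookkeeping: relative to $\Hc_\bullet^\k$ the single extra term of $b$ is the cyclic one, which wraps the last $A$-factor onto the leftmost factor $B$ via the left action — exactly the wrap-around term of $b$ on $\Hc_\bullet^\k(A, B^{\otimes n+1} \otimes I)$; in top bar-degree (i.e. $p = n+1$, no $A$-factors past $x$) this term multiplies $x$ into $B$ and so vanishes, matching the absence of an outgoing differential in bar-degree $0$. The hard part will be the purely mechanical sign bookkeeping: one must verify that the shift $[n+1]$ together with the reindexing $i \mapsto i - (n+1)$ of the surviving summands reproduces the Koszul signs $\epsilon_i$ of the module differential, and that the cyclic term keeps its sign under the shift. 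Everything else is forced by exactness over $\k$ and by $I$ being an ideal annihilated by $f$.
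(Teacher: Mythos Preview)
Your proposal is correct and follows essentially the same approach as the paper: compute the kernel degreewise as $B^{\otimes n+1}\otimes I\otimes A^{\otimes p-n-1}$, then observe that the summands of $b'$ indexed by $0\leq j\leq n$ vanish after projection to $\Qc^n$ because the $I$-factor lands in a slot to which $f$ is applied, leaving exactly the Bar (resp.\ Hochschild) differential of $A$ with coefficients in $B^{\otimes n+1}\otimes I$. Your treatment of the cyclic wrap-around term in the $\Hc$-case is more explicit than the paper's (which simply writes out the surviving $b'$-terms and says ``the conclusion follows''), but the argument is the same.
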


\begin{proof}
 We have $K_p^n := \ker(N^n_p \to N^{n+1}_p) \simeq 0$ if $p \leq n$ and $K_p^n \simeq B^{\otimes n+1} \otimes I \otimes A^{\otimes p-n-1}$ else.
 The differential $b'$ induces the differential $K_{p+1}^n \to K_p^n$ given on a homogeneous tensor $b_0\otimes \cdots \otimes b_{n} \otimes i_{n+1} \otimes a_{n+2}\otimes \cdots \otimes a_{p+1}$ by the formula
 \begin{multline*}
 \pm b_0\otimes \cdots \otimes b_{n} \otimes i_{n+1} a_{n+2}\otimes a_{n+3} \otimes \cdots \otimes a_{p+1} \\+
  \sum_{j=n+2}^{p} \pm b_0\otimes \cdots \otimes b_{n} \otimes i_{n+1} \otimes a_{n+2}\otimes \cdots \otimes a_ja_{j+1} \otimes \cdots \otimes a_{p+1}.
 \end{multline*}
The other terms, for $0 \leq j \leq n$, are cancelled because of the vanishing of the composite $I \to A \to B$. The conclusion follows.
\end{proof}

\begin{cor}\label{cor:HAHB}
 If $I$ is H-unital, then
 \[
  \Bc_\bullet^\k(A,B) \simeq \Bc_\bullet^\k(B) \hspace{2em} \text{and} \hspace{2em} \Hc_\bullet^\k(A,B) \simeq \Hc_\bullet^\k(B).
 \]
\end{cor}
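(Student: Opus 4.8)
The plan is to run the filtration $\{\Qc^n\}_n$ of \autoref{par:quotfiltration} against the kernel computation of \autoref{lem:kernelsfiltration} and the vanishing statements of \autoref{cor:HIHA} and \autoref{cor:BmodHunital}. Writing $\Qc^n$ for either $\Qc^n_{\Bc_\bullet^\k}(f)$ or $\Qc^n_{\Hc_\bullet^\k}(f)$, recall that $\Qc^0 \simeq \Bc_\bullet^\k(A,B)$ (resp. $\Hc_\bullet^\k(A,B)$) and that $\colim_n \Qc^n \simeq \Bc_\bullet^\k(B)$ (resp. $\Hc_\bullet^\k(B)$). The whole strategy is thus to show that each transition map $\Qc^n \to \Qc^{n+1}$ is a quasi-isomorphism, and then to pass to the (sequential, filtered) colimit, reading off the equivalence $\Qc^0 \simeq \colim_n \Qc^n$ at the two endpoints.

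First I would note that the transition maps are degreewise surjective: at the level of the modules $N^n_p$, the comparison map $N^n_p \to N^{n+1}_p$ is either the identity (for $p \leq n$) or applies the surjection $f \colon A \to B$ to a single tensor factor (for $p > n$), hence is onto. We therefore get short exact sequences of complexes
\[
0 \lra \ker\!\left(\Qc^n \to \Qc^{n+1}\right) \lra \Qc^n \lra \Qc^{n+1} \lra 0,
\]
and $\Qc^n \to \Qc^{n+1}$ is a quasi-isomorphism as soon as the kernel is acyclic.

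By \autoref{lem:kernelsfiltration} the kernel is $\Bc_\bullet^\k(A, B^{\otimes n+1}\otimes I)[n+1]$ in the Bar case and $\Hc_\bullet^\k(A, B^{\otimes n+1}\otimes I)[n+1]$ in the Hochschild case, so it remains to kill these two coefficient complexes. For the Bar case I would observe that the coefficient module $B^{\otimes n+1}\otimes I$ is H-unitary over $I$: the relevant right $I$-action is multiplication on the $I$-factor, and $I$ is H-unital by hypothesis, so \autoref{rmq:MtimesAHunital} applies with $I$ in the role of the algebra and $B^{\otimes n+1}$ in the role of the coefficient module; \autoref{cor:HIHA} then gives $\Bc_\bullet^\k(A, B^{\otimes n+1}\otimes I) \simeq \Bc_\bullet^\k(I, B^{\otimes n+1}\otimes I) \simeq 0$. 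For the Hochschild case, $N := B^{\otimes n+1}$ is a connective left $B$-module, so \autoref{cor:BmodHunital} applies verbatim and yields $\Hc_\bullet^\k(A, B^{\otimes n+1}\otimes I) \simeq 0$. In both cases the kernel is acyclic, so every transition map is a quasi-isomorphism.

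Finally, since we work over a field, homology commutes with the sequential colimit, and all transition maps are isomorphisms on homology; hence $\Qc^0 \to \colim_n \Qc^n$ is a quasi-isomorphism, which is exactly the two asserted equivalences. I do not expect a serious obstacle, as the real content is front-loaded into \autoref{lem:kernelsfiltration} and the two preceding corollaries; the only point requiring genuine care is matching each kernel to the result that annihilates it, and in particular verifying in the Bar case that $B^{\otimes n+1}\otimes I$ is H-unitary over $I$ — this is precisely where the H-unitality of $I$ enters.
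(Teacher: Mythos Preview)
Your proposal is correct and follows essentially the same approach as the paper: use the filtration $\{\Qc^n\}$, identify the kernels via \autoref{lem:kernelsfiltration}, and kill them using the H-unitality of $I$ (the paper invokes \autoref{cor:BmodHunital} for $\Hc_\bullet^\k$ and says the $\Bc_\bullet^\k$ case is similar, while you unpack both cases explicitly via \autoref{cor:HIHA}, \autoref{rmq:MtimesAHunital}, and \autoref{cor:BmodHunital}). Your version is simply a more detailed rendering of the paper's two-line proof, including the surjectivity of the transition maps and the passage to the colimit that the paper leaves implicit.
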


\begin{proof}
 It follows from \autoref{lem:kernelsfiltration} and \autoref{cor:BmodHunital} that the filtration from $\Hc_\bullet^\k(A,B)$ to $\Hc_\bullet^\k(B)$ we just introduced is quasi-constant. The case of $\Bc_\bullet^\k$ is similar.
\end{proof}

\paragraph{The proof of \autoref{thm:wodzicki}.}
We start with an extension $I \to A \to B$ of connective dg-algebras. Up to suitable replacements, we can assume that $f \colon A \to B$ is a degree-wise surjective, and that $I$ is its kernel.
Consider the commutative diagram
\[
 \begin{tikzcd}
  \Hc_\bullet^\k(I) \arrow[d, "\alpha"] \arrow[dr] & & \\
  \Hc_\bullet^\k(A,I) \arrow[r] & \Hc_\bullet^\k(A) \ar[r] \ar[dr] & \Hc_\bullet^\k(A,B) \ar[d, "\beta"]\\
  & & \Hc_\bullet^\k(B).
 \end{tikzcd}
\]
The functor $\Hc_\bullet^\k(A,-)$ preserves fiber sequences, and therefore the horizontal sequence is a fiber sequence. From \autoref{cor:HIHA} and \autoref{cor:HAHB}, we deduce that $\alpha$ and $\beta$ are quasi-isomorphisms. In particular the diagonal sequence is a fiber sequence. Similarly, we show that the sequence
\[
 \Bc_\bullet^\k(I) \to \Bc_\bullet^\k(A) \to \Bc_\bullet^\k(B)
\]
is a fiber sequence.
The result then follows from the definitions of $\HH_\bullet^\k$ and $\HC_\bullet^\k$.

\paragraph{Invariance under quasi-isomorphisms}

Let us record for future use that all the constructions of the previous paragraphs are well-behaved with respect to quasi-isomorphisms.

\begin{lem} The following statements hold.
\begin{enumerate}
 \item Let $A \to A'$ be a quasi-isomorphism in $\MCdgAlg_\k^{\mathrm{nu},\leq 0}$ and let $M_1 \to M_2$ be a quasi-isomorphism of connective $A'$-bimodules.
 The induced morphisms
 \begin{align*}
  \Bc_\bullet^\k(A,M_1) &\to \Bc_\bullet^\k(A',M_1) \to \Bc_\bullet^\k(A',M_2) \\
  \text{and~~~} \Hc_\bullet^\k(A,M_1) &\to \Hc_\bullet^\k(A',M_1) \to \Hc_\bullet^\k(A',M_2)
 \end{align*}
 are quasi-isomorphisms.
 In particular, we have quasi-isomorphisms $\Bc_\bullet^\k(A) \to \Bc_\bullet^\k(A,A') \to \Bc_\bullet^\k(A')$ and $\Hc_\bullet^\k(A) \to \Hc_\bullet^\k(A,A') \to \Hc_\bullet^\k(A')$ as well as $\HH_\bullet^\k(A) \to \HH_\bullet^\k(A')$ and $\HC_\bullet^\k(A) \to \HC_\bullet^\k(A')$.
 
 \item Let $f \colon A \to B$ and $g \colon A' \to B'$ be two fibrations in $\MCdgAlg_\k^{\mathrm{nu},\leq 0}$ and let $A \to A'$ and $B \to B'$ be two quasi-isomorphisms commuting with $f$ and $g$. Denote by $I$ and $I'$ the kernels of $f$ and $g$, respectively (so that the induced morphism $I \to I'$ is a quasi-isomorphism too). Then
 \begin{assertions}
  \item The following induced morphisms are quasi-isomorphisms:
  \[
   \Qc_{\Bc_\bullet^\k}^n(f) \to \Qc_{\Bc_\bullet^\k}^n(g), \hspace{3em} \Qc_{\Hc_\bullet^\k}^n(f) \to \Qc_{\Hc_\bullet^\k}^n(g).
  \]
  \item For any morphism of connective $A'$-bimodules $M_1 \to M_2$, the induced morphisms
  \begin{align*}
  \Fc_{\Bc_\bullet^\k}^{n}(f,M_1) &\to \Fc_{\Bc_\bullet^\k}^{n}(g,M_1) \to \Fc_{\Bc_\bullet^\k}^{n}(g,M_2) \\ \text{and}\hspace{1em}
  \Fc_{\Hc_\bullet^\k}^{n}(f,M_1) &\to \Fc_{\Hc_\bullet^\k}^{n}(g,M_1) \to \Fc_{\Hc_\bullet^\k}^{n}(g,M_2)
  \end{align*}
 are quasi-isomorphisms.
 \end{assertions}
\end{enumerate}
\end{lem}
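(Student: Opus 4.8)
The plan is to handle all four families of complexes by a single mechanism, since each is the $\oplus$-total complex of a bicomplex whose $p$-th column is a $\k$-tensor product of the input algebra(s) and module(s). Two facts will be used repeatedly. First, over the field $\k$ the functor $-\otimes_\k-$ is exact, so a tensor product of quasi-isomorphisms is again a quasi-isomorphism. Second, because all inputs lie in $\MCdgAlg_\k^{\mathrm{nu},\leq 0}$ and are therefore cohomologically concentrated in nonpositive degrees, each of these $\oplus$-totalizations is, in every fixed total degree, a \emph{finite} direct sum: the column of bar-degree $p$ contributes to total degree $n$ only through internal degree $n+p$, which forces $p \le -n$. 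This per-degree finiteness is what lets me pass freely between a bicomplex and its total complex.

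With this in hand the proof of each assertion is identical: filter by the bar-degree, letting $F_q$ be the subcomplex spanned by the columns with $p \le q$. Since the horizontal differentials $b$ and $-b'$ both lower $p$, this is an exhaustive filtration by subcomplexes, and by the previous paragraph it is eventually constant in each total degree. On the associated graded $\mathrm{gr}_q$ the horizontal differentials are forgotten and the comparison map reduces to the column map, which is a tensor product of the given quasi-isomorphisms ($A \to A'$, $M_1 \to M_2$, and, for Part 2, $I \to I'$ and $B \to B'$) with identity factors; by exactness of $\otimes_\k$ it is a quasi-isomorphism. Applying the five lemma inductively to the short exact sequences $0 \to F_{q-1} \to F_q \to \mathrm{gr}_q \to 0$ (the base stage being zero, or handled directly) shows each $F_q \to F'_q$ is a quasi-isomorphism, and passing to the per-degree eventually constant colimit gives the claim. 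This simultaneously yields the $\Bc_\bullet^\k$ and $\Hc_\bullet^\k$ statements of Part 1, assertion (a), and assertion (b).

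For Part 1 the columns of the displayed composites are $M_1 \otimes A^{\otimes p} \to M_1 \otimes (A')^{\otimes p} \to M_2 \otimes (A')^{\otimes p}$ (with $M_1$ viewed as an $A$-bimodule by restriction along $A \to A'$), and the ``in particular'' morphisms are the special cases obtained by varying one tensor factor at a time; the statements for $\HH_\bullet^\k$ and $\HC_\bullet^\k$ then follow by totalizing once more the (two-column, resp. left-periodic) bicomplexes whose entries are the now-comparable $\Bc_\bullet^\k$ and $\Hc_\bullet^\k$ — again bounded in each total degree by connectivity, so the same filtration argument applies. For Part 2(a) the columns $N^n_p$ are tensor powers of $B$ and $A$ mapping to those of $B'$ and $A'$, and for 2(b) the columns $M^n_p$ are tensors built from $M$, $A$ and $I$; the only extra input is that $I \to I'$ is itself a quasi-isomorphism, which I obtain from the five lemma applied to the degreewise short exact sequences $0 \to I \to A \to B \to 0$ and $0 \to I' \to A' \to B' \to 0$ (the maps $A \to A'$ and $B \to B'$ being quasi-isomorphisms, the induced map on kernels is one too).

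The argument is essentially bookkeeping; the single point requiring genuine care is that the bicomplexes computing $\Bc_\bullet^\k$, $\HC_\bullet^\k$, $\Qc$ and $\Fc$ have infinitely many columns, so a priori the column filtration might fail to converge (a $\lim^1$ obstruction of the kind that appears for product totalizations). This is exactly what the connectivity hypothesis $(\le 0)$ rules out: the $\oplus$-totalization is finite, and indeed eventually constant under the filtration, in each total degree, so no derived-limit term arises and the five lemma is enough. Granting this, exactness of $\otimes_\k$ does all the remaining work uniformly across the four families.
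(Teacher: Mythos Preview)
Your proof is correct and follows essentially the same approach as the paper: filter by the bar-degree, identify the graded pieces as tensor products of the inputs, and use that $\otimes_\k$ preserves quasi-isomorphisms since $\k$ is a field. The paper's own proof is a two-sentence sketch of exactly this argument; you are more explicit about the convergence issue (boundedness in each total degree via connectivity), which the paper simply absorbs into ``the result follows.''
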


\begin{proof}
 Those are standard arguments: all the complexes at hand are defined as the total space of a simplicial object. In particular, they come with a canonical filtration whose graded parts are of one of the following forms (up to the obvious notational changes)
 \[
  M \otimes_\k A^{\otimes n} \hspace{2em}\text{or}\hspace{2em} M \otimes_\k A^{\otimes n} \otimes_\k I^{p-n} \hspace{2em}\text{or}\hspace{2em} B^{\otimes n+1} \otimes_\k A^{\otimes p-n}
 \]
 Since $\k$ is a field, the induced morphisms between those graded parts are quasi-isomorphisms and the result follows.
\end{proof}

\begin{cor}
 The functors $\Bc_\bullet^\k$, $\Hc_\bullet^\k$, $\Qc_{\Bc_\bullet^\k}^n$, $\Qc_{\Hc_\bullet^\k}^n$, $\Fc_{\Bc_\bullet^\k}^n$, $\Fc_{\Hc_\bullet^\k}^n$, $\HH_\bullet^\k$ and $\HC_\bullet^\k$ descend to $\infty$-functors (that we will denote the same) between the appropriate $\infty$-categories localized along quasi-isomorphisms.
\end{cor}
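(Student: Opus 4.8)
The plan is to deduce the statement formally from the preceding lemma via the universal property of $\infty$-categorical localization. Recall that if $\Cc$ is an ordinary category equipped with a class $W$ of morphisms, its localization comes with a functor $\gamma \colon \Cc \to \Cc[W^{-1}]$ that is initial among functors to $\infty$-categories inverting $W$: for every $\infty$-category $\Dd$, precomposition with $\gamma$ identifies $\Fct(\Cc[W^{-1}], \Dd)$ with the full subcategory of $\Fct(\Cc, \Dd)$ spanned by those functors that send $W$ to equivalences. Hence, to descend any of our strict functors it suffices to exhibit it—after composing with the localization of its target—as a functor inverting quasi-isomorphisms, which is exactly what the lemma supplies.

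Concretely, I would first handle the functors depending only on an algebra (and possibly a bimodule). For $F$ one of $\Bc_\bullet^\k$, $\Hc_\bullet^\k$, $\HH_\bullet^\k$, $\HC_\bullet^\k$, regard it as a strict functor valued in $\MCdgMod_\k$ and postcompose with the localization $\MCdgMod_\k \to \dgMod_\k$. Part (i) of the lemma says precisely that the resulting composite sends quasi-isomorphisms (in $A$, and in the bimodule variable $M$ when present) to equivalences in $\dgMod_\k$, so the universal property with $\Dd = \dgMod_\k$ yields the descended $\infty$-functor, e.g. $\dgAlg_\k^{\mathrm{nu},\leq 0} \to \dgMod_\k$ for $\HH_\bullet^\k$ and $\HC_\bullet^\k$, and the evident variant on pairs $(A,M)$ for the bivariate versions. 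For $\Qc_{\Bc_\bullet^\k}^n$, $\Qc_{\Hc_\bullet^\k}^n$, $\Fc_{\Bc_\bullet^\k}^n$, $\Fc_{\Hc_\bullet^\k}^n$ the argument is identical once I fix the right source relative categories: for $\Qc^n$, the category of degreewise-surjective maps $f \colon A \to B$ in $\MCdgAlg_\k^{\mathrm{nu},\leq 0}$, with weak equivalences the compatible pairs of quasi-isomorphisms $A \to A'$, $B \to B'$; for $\Fc^n$, the same data together with a connective $A$-bimodule $M$ (a Grothendieck-type construction over $\MCdgBiMod^{\mathrm{nu},\leq 0}$), with weak equivalences detected levelwise. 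Assertions (a) and (b) of part (ii) are exactly the statements that $\Qc^n$ and $\Fc^n$ invert these weak equivalences, so the universal property again produces the descended $\infty$-functors.

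There is no substantial obstacle here beyond bookkeeping; the corollary is a formal consequence of the lemma. The one point genuinely deserving care—and the only place where the phrase ``the appropriate $\infty$-categories'' has to be pinned down—is the specification of the source relative categories for the morphism- and pair-valued functors above, together with the observation that the weak equivalences there are detected levelwise, which is precisely the form in which the lemma is stated. Once these source categories are fixed, every functor in the list inverts quasi-isomorphisms and therefore factors, uniquely up to contractible choice, through the corresponding localization.
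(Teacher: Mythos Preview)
Your proposal is correct and is precisely the intended argument: the paper states the corollary without proof, treating it as an immediate formal consequence of the preceding lemma via the universal property of $\infty$-categorical localization. Your explicit unpacking of the source relative categories for $\Qc^n$ and $\Fc^n$ is more detailed than anything the paper spells out, but this is only bookkeeping, as you note.
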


\subsection{Relative cyclic homology and K-theory}\label{subsec:relK}
In this subsection, we recall the fundamental notions of $\K$-theory and of the equivariant Chern character, at least in the relative setting.

We consider the (connective) $\K$-theory functor as an $\infty$-functor $\dgAlg_\k^{\leq 0} \to \Sp_{\geq 0}$.
Let $(\dgAlg_\k^{\leq 0})^{\Delta^1}_\mathrm{nil}$ denote the $\infty$-category of morphisms $A \to B$ that are surjective with nilpotent kernel at the level of $\homol^0$.

\begin{df}
 The relative $\K$-theory functor is the $\infty$-functor
 \[
  \bK \colon (\dgAlg_\k^{\leq 0})^{\Delta^1}_\mathrm{nil} \to \Sp_{\geq 0}
 \]
 given on a morphism $f \colon A \to B$ by the homotopy fiber $\bK(f) = \hofib(\K(A) \to \K(B))$.
\end{df}
Since the map $\homol_0A \to \homol_0B$ is surjective with nilpotent kernel, the map $\K_1(A) \to \K_1(B)$ is also surjective while the map $\K_0(A) \to \K_0(B)$ is an isomorphism. In particular, the spectrum $\bK(f)$ is connected.

\begin{rmq}\label{rmk:relKnc}
 Using Bass' exact sequence, one can easily show (see for instance \cite[\S 2.8]{beilinson:relK}) that relative connective $\K$-theory and relative non-connective $\K$-theory are equivalent:
 \[
  \bK(f) := \hofib(\K(A) \to \K(B)) \simeq \hofib(\K^\mathrm{nc}(A) \to \K^\mathrm{nc}(B)) \in \Sp.
 \]
\end{rmq}

\begin{df}
 The relative cyclic homology functor is the $\infty$-functor 
 \[
  \bHC_\bullet^\k \colon (\dgAlg_\k^{\leq 0})^{\Delta^1}_\mathrm{nil} \to \dgMod_\k^{\leq 0}
 \]
 given on $f \colon A \to B$ by $\bHC_\bullet^\k(f) := \hofib(\HC_\bullet^\k(A) \to \HC_\bullet^\k(B))$.
\end{df}

\begin{rmq}
 Goodwillie's definition of relative $\K$-theory and cyclic homology in \cite{goodwillie:tangentk} differs from ours by a shift of $1$.
\end{rmq}

The main result of \cite{goodwillie:tangentk} states that the Chern character induces an equivalence $\bch \colon \bK \wedge \Q \to \bHC_\bullet^\Q[1]$.
The construction of the relative Chern character from the absolute one is straigthforward. We will however need a more hands-on construction in \autoref{sec:tracemap}. The two constructions have been proven to coincide by Cortiñas and Weibel in \cite{cortinasweibel:relative} (see \autoref{rmq:relChern} below).

In order to construct our relative Chern character, we will need some explicit model for relative $\K$-theory of connective dg-algebras over $\k$ (containing $\Q$).
For convenience, we will work with the equivalent model of simplicial $\k$-algebras. We denote by $\sAlg_\k$ the category of simplicial (unital) algebras over $\k$, endowed with its standard model structure.

\paragraph{Matrix groups and $\K$-theory}

We start by recalling notions from \cite{waldhausen:ktheory} (see also \cite{goodwillie:tangentk}).

For $A \in \sAlg_\k$, we denote by $\mathrm{M}_n(A)$ the simplicial set obtained by taking $n \times n$-matrices level-wise. Finally, we define the group of invertible matrices as the pullback
\[
 \begin{tikzcd}
  \GL_n(A) \arrow[r] \cart \arrow[d] & \mathrm{M}_n(A) \arrow[d] \\
  \GL_n(\pi_0A) \arrow[r] & \mathrm{M}_n(\pi_0A).
 \end{tikzcd}
\]
We have $\pi_0(\GL_n(A)) \simeq \GL_n(\pi_0A)$ and $\pi_i(\GL_n(A)) \simeq \mathrm{M}_n(\pi_i A)$ for $i \geq 1$. In particular, this construction preserves homotopy equivalences. The simplicial set $\GL_n(A)$ is a group-like simplicial monoid and we denote by $\BGL_n(A)$ its classifying space.
Finally, we denote by $\BGL_\infty(A)$ the colimit $\colim_n \BGL_n(A)$.

Applying Quillen's plus construction to $\BGL_\infty(A)$ yields a model for $\K$-theory, so that there is an equivalence
\[
 \K_0 \times \BGL_\infty(-)^+ \simeq \infloop \K.
\]
Based on this equivalence, we will build in the next paragraph a model for relative $\K$-theory using relative Volodin spaces.

\paragraph{Relative Volodin construction}
The Volodin model for the $\K$-theory of rings first appeared in \cite{volodin:ktheory} in the absolute case. The relative version seem to originate from an unpublished work of Ogle and Weibel. It can also be found in \cite{loday:hc}. We will need a version of that construction for simplicial algebras over $\k$.

We fix a fibration $f \colon A \to B$ in $\sAlg_\k$ such that the induced morphism $\pi_0(A) \to \pi_0(B)$ is surjective. In particular, the morphism $f$ is level-wise surjective. We denote by $I$ its kernel and we assume that $\pi_0I$ is nilpotent in $\pi_0A$.
\begin{df}
 Let $n \geq 1$ and let $\sigma$ be a partial order on the set $\{1, \dots, n\}$. We denote by $\mathrm{T}^\sigma_n(A,I)$ the sub simplicial set of $\mathrm{M}_n(A)$ given in dimension $p$ by the subset of $\mathrm{M}_n(A_p)$ consisting of matrices of the form $1+(a_{ij})$ with $a_{ij} \in I_p$ if $i$ is not lower than $j$ for the order $\sigma$.
\end{df}
 
 As a simplicial set, $\mathrm{T}^\sigma_n(A,I)$ is isomorphic to a simplicial set of the form $A^\alpha \times I^\beta$ with $\alpha$ and $\beta$ are integers depending on $\sigma$, such that $\alpha + \beta = n^2$. In particular, this construction is homotopy invariant.
Moreover, the map $\mathrm{T}^\sigma_n(A,I) \to \mathrm{M}_n(A)$ factors through $\GL_n(A)$. Actually, $\mathrm{T}^\sigma_n(A,I)$ is a simplicial subgroup of the simplicial monoid $\GL_n(A)$.

\begin{df}
 We define the relative Volodin space $X(A,I)$ as the union 
 \[
  X(A,I) := \bigcup_{n, \sigma} \B\mathrm{T}^\sigma_n(A,I) \subset \BGL_\infty(A).
 \]
\end{df}
\noindent The group $\pi_1(X(A,I))$ contains as a maximal perfect subgroup the group $E(\pi_0(A))$ and we apply the plus construction to this pair.

\begin{prop}
The inclusion $X(A,I) \to \BGL_\infty(A)$ induces a fiber sequence
\[
 X(A,I)^+ \to \BGL_\infty(A)^+ \to \BGL_\infty(B)^+.
\]
In particular, we have a (functorial) equivalence
\[
 X(A,I)^+ \simeq \infloop \bK(f).
\]
\end{prop}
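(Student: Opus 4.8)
The plan is to dispatch the final ``in particular'' as a formal consequence of the fiber sequence, and then to concentrate on the fiber sequence itself. Recall that $\BGL_\infty(-)^+$ computes the base-point component of $\infloop\K$, and that $\K_0(A) \to \K_0(B)$ is an isomorphism (the kernel being nilpotent at the level of $\homol_0$). Hence the homotopy fiber of $\infloop\K(A) \to \infloop\K(B)$ agrees with that of $\BGL_\infty(A)^+ \to \BGL_\infty(B)^+$; since $\infloop$ preserves fiber sequences and $\bK(f) := \hofib(\K(A) \to \K(B))$, we obtain $\infloop\bK(f) \simeq \hofib(\BGL_\infty(A)^+ \to \BGL_\infty(B)^+)$. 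Everything therefore reduces to identifying this homotopy fiber, which I will call $F$, with $X(A,I)^+$.

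First I would construct a comparison map $X(A,I)^+ \to F$. The key observation is that the composite $X(A,I) \to \BGL_\infty(A) \to \BGL_\infty(B)$ factors through the \emph{absolute} Volodin space of $B$: on each building block, a matrix $1+(a_{ij}) \in \mathrm{T}^\sigma_n(A,I)$ has all its $I$-entries killed by $f$, so its image lands in the strictly $\sigma$-triangular unipotent subgroup $\mathrm{T}^\sigma_n(B) \subset \GL_n(B)$. By the (simplicial upgrade of the) Volodin--Suslin acyclicity theorem, the absolute Volodin space $\bigcup_{n,\sigma}\B\mathrm{T}^\sigma_n(B)$ is acyclic, so its plus construction is contractible. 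Consequently the composite $X(A,I)^+ \to \BGL_\infty(B)^+$ is canonically null, and a choice of null-homotopy yields the desired map $X(A,I)^+ \to F$.

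Next I would show that this map is an equivalence. Both source and target are connected nilpotent spaces (the block sum of matrices equips them with compatible H-space structures, and $F$ is even an infinite loop space), so by the homology-Whitehead formalism for the plus construction it suffices to check that the comparison map is an isomorphism on integral homology. I would compare the Serre spectral sequence of the fibration $F \to \BGL_\infty(A)^+ \to \BGL_\infty(B)^+$ with the homology of $X(A,I)$ computed through its defining filtration by the subgroups $\mathrm{T}^\sigma_n(A,I)$. Using the homotopy invariance of each building block (the isomorphism $\mathrm{T}^\sigma_n(A,I) \cong A^\alpha \times I^\beta$ of simplicial sets) this reduces to the relative Volodin statement that $\homol_\bullet(X(A,I))$ computes the relative homology of $\BGL_\infty(A) \to \BGL_\infty(B)$.

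The main obstacle is precisely this last homological identification in the simplicial setting. Over a discrete ring it is the classical Volodin--Suslin(--Wodzicki) theorem, but here $A$ and $B$ are simplicial $\k$-algebras, and one must upgrade both the acyclicity of the absolute Volodin space and the relative computation while retaining functoriality in $f$. I expect to handle this by running the classical argument degreewise and reassembling via the simplicial (skeletal) spectral sequence, leaning on the homotopy invariance of $\mathrm{T}^\sigma_n(A,I)$ and on the pullback description of $\GL_n(A)$, so that the homotopy groups of all groups in play are controlled by the discrete statement applied to $\pi_0$ together with the induced module structure on the higher $\pi_i$.
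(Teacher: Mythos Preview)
Your outline is structurally sound through the construction of the comparison map $X(A,I)^+ \to F$, but the proposed homology comparison has a genuine gap. The sentence ``compare the Serre spectral sequence of the fibration $F \to \BGL_\infty(A)^+ \to \BGL_\infty(B)^+$ with the homology of $X(A,I)$ computed through its defining filtration by the subgroups $\mathrm{T}^\sigma_n(A,I)$'' is not a well-posed comparison: the colimit filtration on $X(A,I)$ does not yield a spectral sequence of the same shape as a Serre spectral sequence, and you have not constructed a map between them. Your own reduction --- to ``the relative Volodin statement that $\homol_\bullet(X(A,I))$ computes the relative homology of $\BGL_\infty(A) \to \BGL_\infty(B)$'' --- is a restatement of the goal rather than a step toward it. The degreewise-plus-skeletal-spectral-sequence upgrade sketched in your last paragraph is also fragile, since the plus construction does not commute with geometric realization, so the discrete statement does not assemble in the way you suggest.

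The idea you are missing is a direct point-set observation that bypasses all of this. The square
\[
\begin{tikzcd}
X(A,I) \ar[r] \ar[d] & \BGL_\infty(A) \ar[d, "p"] \\
X(B,0) \ar[r] & \BGL_\infty(B)
\end{tikzcd}
\]
is Cartesian on the nose (check simplex by simplex, using that $f$ is levelwise surjective with kernel $I$). Since $p$ is a fibration (levelwise surjection of group-like simplicial monoids, using that $\pi_0 I$ is nilpotent), the square is homotopy Cartesian. Absolute Volodin acyclicity gives the fiber sequence $X(B,0) \to \BGL_\infty(B) \to \BGL_\infty(B)^+$, and pulling back along $p$ yields the fiber sequence $X(A,I) \to \BGL_\infty(A) \to \BGL_\infty(B)^+$. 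Placing this alongside the fiber sequences $X(A,0) \to \BGL_\infty(A) \to \BGL_\infty(A)^+$ and $F \to \BGL_\infty(A)^+ \to \BGL_\infty(B)^+$ in a $3 \times 3$ grid produces a fiber sequence $X(A,0) \to X(A,I) \to F$; since $X(A,0)$ is acyclic, the map $X(A,I) \to F$ is a homology isomorphism and hence $X(A,I)^+ \simeq F$. No spectral sequence comparison and no degreewise reduction is required: the simplicial case works verbatim via the same one-line Cartesian check.
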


\begin{proof}
This is a classical argument, that can be found in \cite[\S11.3]{loday:hc} for rings. We simply extend it to simplicial algebras.
 We start by drawing the following commutative diagram
\[
 \begin{tikzpicture}[baseline=0em,xscale=0.25em,yscale=-.16em]
  \node (00) at (0,0) {$X(A,0)$};
  \node (10) at (1,0) {$X(A,I)$};
  \node (20) at (2,0) {$\Omega^{\infty}\bK(f)$};
  \node (01) at (0,1) {$X(A,0)$};
  \node (11) at (1,1) {$\BGL_\infty(A)$};
  \node (21) at (2,1) {$\BGL_\infty(A)^+$};
  \node (02) at (0,2) {$0$};
  \node (12) at (1,2) {$\BGL_\infty(B)^+$};
  \node (22) at (2,2) {$\BGL_\infty(B)^+$};
  
  \node (A) at (0, -0.4) {(a)};
  \node (B) at (1, -0.4) {(b)};
  \node (C) at (2, -0.4) {(c)};
  \node (1) at (-0.6, 0) {(1)};
  \node (2) at (-0.6, 1) {(2)};
  \node (3) at (-0.6, 2) {(3)};
  
  \path[commutative diagrams/.cd, every arrow, every label]
(00) edge (10) edge (01)
(10) edge (20) edge (11)
(20) edge (21)
(01) edge (11) edge (02)
(11) edge (21) edge (12)
(02) edge (12)
(12) edge (22)
(21) edge (22);
\end{tikzpicture}
\]

The space $X(A,0)$ is acyclic (see \cite{suslin:eqK} for the discrete case, the simplicial case being deduced by colimits). It follows (by the properties of the plus construction), that the row (2) is a fibration sequence. Obviously, so are the row (3) and the columns (a) and (c).
It now suffices to show that column (b) is a fibration sequence.
Consider the commutative diagram
 \[
  \begin{tikzcd}
   X(A,I) \ar[r] \ar[d] \ar[dr, phantom, "\displaystyle (\tau)"] & \BGL_\infty(A) \ar[d, "p"] \ar[r] & \BGL_\infty(B)^+ \ar[d] \\
   X(B,0) \ar[r] & \BGL_\infty(B) \ar[r] & \BGL_\infty(B)^+.
  \end{tikzcd}
 \]
 The square $(\tau)$ is homotopy Cartesian. Indeed, the morphism $p$ is induced by a point-wise surjective fibration of group-like simplicial monoids (recall that we assumed $\pi_0(I)$ to be nilpotent). It is therefore a fibration. We can now see that the diagram is cartesian on the nose by looking at its simplices.
 The bottom row is a fibration sequence. It follows that the top row (which coincides with column (b)) is also a fibration sequence.
 
 As a consequence, row (1) induces a fibration sequence in homology. Since $X(A,0)$ is acyclic, the homologies of $X(A,I)$ and of $\Omega^{\infty}\bK(f)$ are isomorphic. It follows that $X(A,I)^+$ and $\infloop \bK(f)$ are homotopy equivalent.
\end{proof}

\paragraph{Malcev's theory}
In order to construct our relative Chern character $\bch \colon \bK \wedge \Q \to \bHC_\bullet^\Q[1]$, it is now enough to relate the homology of the relative Volodin spaces $X(A,I)$ with cyclic homology.
This step uses Malcev's theory, that relates homology of nilpotent uniquely divisible groups (such as $\mathrm{T}^\sigma_n(A,I)$ for $A$ discrete) with the homology of an associated nilpotent Lie algebra.
The original reference is \cite{malcev:lie}.

For simplicity, we will only work with Lie algebras of matrices. We assume for now that $A$ is a discrete unital $\Q$-algebra. Let $\n \subset \gl_n(A)$ be a nilpotent sub-Lie algebra (for some $n$). Denote by $N$ the subgroup $N := \exp(\n) \subset \GL_n(A)$.
We have the following proposition (for a proof, 
we refer to \cite[Theorem 5.11]{suslinwodzicki}).
\begin{prop}[Malcev, Suslin-Wodzicki]\label{prop:malcev}
There is a quasi-isomorphism
\[
 \Q[\B N] := \mathrm C_\bullet(\B N, \Q) \to \CE_\bullet^\Q(\n)
\]
functorial in $\n$, where $\B N$ denotes the classifying space of $N$.
Moreover, this quasi-isomorphism is compatible with the standard filtrations on those complexes.
\end{prop}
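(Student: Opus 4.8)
The plan is to exhibit both complexes as models for the same homological invariant and to compare them compatibly with the filtrations induced by the lower central series. First I would record the exponential dictionary: since $\n \subset \gl_n(A)$ is nilpotent, the series $\exp$ and $\log$ terminate and define mutually inverse polynomial bijections $\n \rightleftarrows N$, so that $N = \exp(\n)$ is a uniquely divisible nilpotent group (the Malcev group of $\n$). The normalized bar complex $C_\bullet(\B N, \Q)$ computes group homology $H_\bullet(N,\Q) = \operatorname{Tor}^{\Q[N]}_\bullet(\Q,\Q)$, while $\CE_\bullet^\Q(\n)$ computes $\operatorname{Tor}^{U(\n)}_\bullet(\Q,\Q)$; both carry the filtration coming from the lower central series of $\n$ (equivalently of $N$), and the asserted map must respect it.

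The homological comparison I would prove by induction on the nilpotency class, using a central extension $0 \to \mathfrak z \to \n \to \bar{\n} \to 0$ (say with $\mathfrak z$ the last nonzero term of the lower central series) which exponentiates to a central extension of groups $1 \to Z \to N \to \bar N \to 1$. In the base case $\n$ is abelian, $N$ is the additive group of the $\Q$-vector space $\n$, and the classical identity $H_\bullet(\B N, \Q) \cong \Lambda^\bullet \n = \CE_\bullet^\Q(\n)$ (the target having zero differential) supplies the map. For the inductive step I would match the Lyndon--Hochschild--Serre spectral sequence of the group extension with the Hochschild--Serre spectral sequence of the Lie extension: because both $Z$ and $\mathfrak z$ are central, the $E^2$-pages are $H_\bullet(\bar N,\Q) \otimes H_\bullet(Z,\Q)$ and $H_\bullet^{CE}(\bar{\n}) \otimes H_\bullet^{CE}(\mathfrak z)$, which agree by the inductive hypothesis and the abelian base case; the transgressions are governed by the two extension classes, which correspond under the Malcev dictionary, so the spectral sequences are isomorphic and hence so are the abutments.

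To turn this isomorphism into the \emph{functorial chain map} respecting filtrations that the statement demands, I would not work on homology alone but build the comparison between the filtered bar complex of $N$ and the Chevalley--Eilenberg complex directly, assembling level-wise contracting homotopies along the lower central series by an acyclic-models or homological-perturbation argument so as to keep the result natural in $\n$ and strictly filtered. This is exactly the technical content isolated in \cite[Theorem 5.11]{suslinwodzicki}, which I would cite for the explicit construction and for the filtration-compatibility recorded in the final sentence of the statement.

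The main obstacle is therefore not the homological isomorphism, which falls out cleanly from the spectral-sequence comparison and the abelian base case, but the production of the map as an honest chain map that is \emph{simultaneously} functorial in $\n$ and strictly filtered. The obvious guess $(g_1,\dots,g_p) \mapsto \log g_1 \wedge \cdots \wedge \log g_p$ already fails to commute with the differentials in degree $2$, since $\log(g_1 g_2)$ differs from $\log g_1 + \log g_2$ by Baker--Campbell--Hausdorff corrections; genuine correction terms are required, and arranging them to remain natural and filtration-preserving at once is the delicate point for which the construction of \cite{suslinwodzicki} is used.
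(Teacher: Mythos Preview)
Your proposal is correct in outcome — you end up invoking \cite[Theorem 5.11]{suslinwodzicki} for the filtered chain-level map, which is exactly what the paper does — but your route to the homological isomorphism differs from the paper's sketch. The paper does not argue by induction on nilpotency class via matching Lyndon--Hochschild--Serre and Hochschild--Serre spectral sequences. Instead it records the more direct mechanism: the completions of $\Q[N]$ and of $U(\n)$ along their augmentation ideals are isomorphic (via the $\exp$/$\log$ dictionary on primitive and group-like elements), and the standard bar resolution of $\Q$ over $\Q[N]$ and the Koszul/Chevalley--Eilenberg resolution of $\Q$ over $U(\n)$ become compatible through that isomorphism. This single Hopf-algebraic comparison already yields the functorial chain map, and the filtration compatibility is then automatic because the ``standard'' filtrations on both complexes are precisely the powers of the augmentation ideals, which correspond under the completion isomorphism.

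Your spectral-sequence argument is a perfectly valid way to see the isomorphism on homology, and it has the pedagogical merit of making the role of the lower central series explicit. Its cost is the one you yourself flag: it does not by itself produce the chain map, so you must appeal to \cite{suslinwodzicki} anyway, rendering the inductive argument logically redundant. The paper's completed-Hopf-algebra approach buys the chain map and the filtration statement in one stroke, which is why it is preferred here.
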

The construction of this quasi-isomorphism is based on two statements. First, the completion of the algebras $\mathcal U(\n)$ and $\Q[N]$ along their augmentation ideals are isomorphic and, second, the standard resolutions of $\Q$ as a trivial module on those algebras are compatible.
It follows that this quasi-isomorphism is actually compatible with the standard filtrations on both sides.

Fix $I \subset A$ a nilpotent ideal. For $n \in \N$ and $\sigma$ a partial order on $\{1, \dots, n\}$, we denote by $\mathrm{t}^\sigma_n(A,I) \subset \gl_n(A)$ the (nilpotent) Lie algebra of matrices $(a_{ij})$ such that $a_{ij} \in I$ if $i$ is not smaller that $j$ for the partial order $\sigma$.
We have then by definition $\mathrm{T}^\sigma_n(A,I) = \exp(\mathrm{t}^\sigma_n(A,I))$ and therefore a quasi-isomorphism 
 $\tau \colon \Q[\B \mathrm{T}^\sigma_n(A,I)] \to \CE_\bullet^\Q(\mathrm{t}^\sigma_n(A,I))$ functorial in $A$ and $I$.
 
Return now to the general case of a morphism of simplicial rings $f \colon A \to B$ such that $\pi_0 A \to \pi_0 B$ is surjective with nilpotent kernel. First, we replace $A$ so that $A \to F$ is a filtration (i.e. is levelwise surjective) and $\ker(A_0 \to B_0)$ is nilpotent. Using the monoidal Dold--Kan correspondence (see \cite{schwedeshipley:monoidalDK}), we can further assume $A_n$ (resp. $B_n$) to be a nilpotent extension of $A_0$ (resp. $B_0$). All in all, we have replaced $f$ with an equivalent morphism, which is levelwise surjective and such that the kernel $I$ is (levelwise) nilpotent.
Both functors $\Q[\B \mathrm{T}^\sigma_n(-,-)]$ and $\CE_\bullet^\Q(\mathrm{t}^\sigma_n(-,-))$ preserve geometric realizations and we therefore get a (functorial) quasi-isomorphism
\[
 \tau \colon \Q[\B \mathrm{T}^\sigma_n(A,I)] \to \CE_\bullet^\Q(\mathrm{t}^\sigma_n(A,I))
\]
by applying $\tau$ level-wise.

\paragraph{The relative Chern character}\label{par:chern}
The usual construction (used among others by Goodwillie) is based on the absolute Chern character $\mathrm{ch} \colon \K \to \HC^\Q_\bullet(-)[1]$. It is a (functorial) morphism of spectra $\bK(f) \to \bHC_\bullet^\Q(f)[1]$. We will only need the induced relative (and $\Q$-linear) version:

\begin{df}
 We will denote by $\bch$ the ($\Q$-linear) relative Chern character
 \[
  \bch \colon \bK(f) \wedge \Q \to \bHC_\bullet^\Q(f)[1].
 \]
\end{df}

\begin{thm}[Goodwillie \cite{goodwillie:tangentk}]\label{thm:goodwillie}
 The morphism $\bch$ is a quasi-isomorphism.
\end{thm}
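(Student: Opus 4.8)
The plan is to bypass a direct comparison of spectra and instead compute the rational homology of both sides through the explicit models assembled above, identifying them via the trace map. The starting point is the Volodin equivalence $\infloop\bK(f) \simeq X(A,I)^+$ together with the fact that the plus construction preserves homology; hence it suffices to understand $\homol_\bullet(X(A,I);\Q)$. Since $X(A,I) = \bigcup_{n,\sigma} \B\mathrm{T}^\sigma_n(A,I)$, its rational chains form a filtered colimit of the group chains $\Q[\B\mathrm{T}^\sigma_n(A,I)]$, and \autoref{prop:malcev} (Malcev--Suslin--Wodzicki), applied levelwise and compatibly with the standard filtrations, replaces each of these by the Chevalley--Eilenberg complex $\CE_\bullet^\Q(\mathrm{t}^\sigma_n(A,I))$. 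This produces a quasi-isomorphism from $\mathrm C_\bullet(X(A,I);\Q)$ to the Lie-algebraic Volodin complex $\bigcup_{n,\sigma}\CE_\bullet^\Q(\mathrm{t}^\sigma_n(A,I))$.

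Next I would run, on the Lie side, the exact analogue of the argument proving the Volodin equivalence. The role played by the acyclicity of $X(A,0)$ (Suslin) is taken by the acyclicity of the strictly triangular nilpotent Lie algebras, i.e. of the complex $\bigcup_\sigma \CE_\bullet^\Q(\mathrm{t}^\sigma_n(A,0))$; with this input the same fibration-sequence formalism identifies the Lie Volodin complex with the relative Chevalley--Eilenberg homology of $\gl_\infty$, namely the (primitive part of the) homotopy fiber of $\CE_\bullet^\Q(\gl_\infty(A)) \to \CE_\bullet^\Q(\gl_\infty(B))$. I would then invoke Loday--Quillen--Tsygan (\autoref{thm:LQT}) for $A$ and for $B$: as Hopf algebras $\CE_\bullet^\Q(\gl_\infty(-)) \simeq \Sym_\Q(\HC_\bullet^\Q(-)[1])$, the coproduct coming from the block-sum of matrices and the primitives being exactly $\HC_\bullet^\Q(-)[1]$. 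Taking homotopy fibers (everything here is connected and the constructions are rationally pronilpotent, so $\Sym$ and ``primitives'' commute with the relative construction) identifies the relative $\gl_\infty$-homology with $\bHC_\bullet^\Q(f)[1]$ and its symmetric-algebra envelope with $\homol_\bullet(X(A,I);\Q)$.

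Chasing the generalized trace $\Tr$ through the Malcev comparison then shows that this chain of identifications is precisely the one induced by $\bch$ on homology, so that $\bch$ is a homology isomorphism. Finally, since $\bK(f)$ is connected, the rational Hurewicz/primitive-element theorem upgrades this homology isomorphism to an equivalence of the underlying connective spectra, giving $\bK(f)\wedge\Q \simeq \bHC_\bullet^\Q(f)[1]$.

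The main obstacle I anticipate is not any single step but the coherence of all the comparison maps: one must check that the group-level trace used to build $\bch$, the Malcev quasi-isomorphism of \autoref{prop:malcev}, and the Lie-level trace of \autoref{thm:LQT} fit into a commuting diagram, and that this is compatible with the two colimits at play (over the partial orders $\sigma$ and over $n$, together with the rank filtration that \autoref{thm:LQT} requires). A secondary difficulty is making the relative Loday--Quillen--Tsygan statement rigorous for simplicial rather than discrete algebras; here one reduces, via the monoidal Dold--Kan correspondence and the levelwise nilpotence arranged above, to the discrete case, and may use Wodzicki-type excision (\autoref{thm:wodzicki}) to re-express relative cyclic homology in terms of the kernel $I$ where that is convenient.
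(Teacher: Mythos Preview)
The paper does not prove this theorem; it is stated with attribution to Goodwillie's 1986 paper and used as a black box. There is therefore no ``paper's own proof'' to compare your attempt against.

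That said, your outline is a recognisable and essentially correct strategy: it is the Volodin--Malcev--Loday--Quillen--Tsygan route found in Chapter~11.3 of Loday's \emph{Cyclic Homology}, rather than Goodwillie's original spectral-sequence argument. The skeleton is sound, and you have correctly identified the two genuine pressure points. The first is the Lie-algebraic acyclicity statement (the analogue of Suslin's acyclicity of $X(A,0)$ for the union of the $\CE_\bullet^\Q(\mathrm{t}^\sigma_n(A,0))$); this is a nontrivial theorem in its own right and cannot simply be asserted. The second is the passage from ``homology isomorphism on Hopf algebras'' to ``equivalence on primitives/relative cyclic homology'': your parenthetical about $\Sym$ and primitives commuting with the relative construction hides real work, since one needs the Hopf-algebra structure to survive the relative comparison and then to invoke the Milnor--Moore structure theorem on connected cocommutative Hopf algebras over $\Q$. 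Both points are handled in Loday's book, so your proposal is a faithful sketch of a known proof, but as written it would not stand on its own without importing those lemmas.

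One minor correction: the rational Hurewicz step at the end is not quite the right tool. What you need is that a map of connected infinite loop spaces (or connective $H\Q$-module spectra) inducing an isomorphism on rational homology is a rational equivalence; this follows from the Hopf-algebra structure and Milnor--Moore, not from Hurewicz per se.
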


In order to compare a tangent map to the generalized trace in \autoref{sec:tracemap}, we will need another description of the relative Chern character.
Using the natural inclusion $\mathrm{t}^\sigma_n(A,I) \to \gl_\infty(A)$ and the generalized trace map, we get a morphism
\[
 \CE_\bullet^\Q(\mathrm{t}^\sigma_n(A,I)) \to \CE_\bullet^\Q(\gl_\infty(A)) \to \HC_\bullet^\Q(A)[1]
\]
whose image lies in the subcomplex $\bHC_\bullet^\Q(f)[1]$. We find
\[
 \Q[\B \mathrm{T}^\sigma_n(A,I)] \to^\sim \CE_\bullet^\Q(\mathrm{t}^\sigma_n(A,I)) \to^\Tr \bHC_\bullet^\Q(f)[1].
\]
Taking the colimit on $n$ and $\sigma$, we find a version of the relative Chern character
\[
 \bch^\Q \colon \Q[\infloop \bK(f)] \simeq \Q[X(A,I)] \to \bHC_\bullet^\Q(f)[1].
\]
This version a priori does not descend to a morphism of spectra. However, Cortiñas and Weibel proved that $\bch$ induces a morphism $\Q[\infloop \bK(f)] \to \bHC_\bullet^\Q(f)[1]$ that is functorially homotopic to our $\bch^\Q$ (see \cite{cortinasweibel:relative}).
\begin{rmq}\label{rmq:relChern}
Cortiñas and Weibel only consider the case of discrete algebras. If $f \colon A \to B$ is a surjective morphism of simplicial algebras, we can reduce to the discrete case. Indeed, writing $f$ as a geometric realization of discrete $f_n \colon A_n \to B_n$ and setting $I_n = \ker(f_n)$, we find $\Q[X(A,I)] \simeq \colim_{[n]} \Q[X(A_n,I_n)]$ as well as $\bHC_\bullet^\Q(f) \simeq \colim_{[n]} \bHC_\bullet^\Q(f_n)$.
 The relative Chern characters evaluated on $I \subset A \to B$ are then (both) determined by their value on discrete algebras, and therefore coincide using Cortiñas and Weibel's result.
\end{rmq}

\paragraph{The case of H-unital algebras}\label{par:goodwillie_for_H_unital}
In the proof of our main result, we will use a Goodwillie theorem for H-unital algebras. To extend \autoref{thm:goodwillie} to this context, we will need the following result of Suslin and Wodzicki \cite{suslinwodzicki} (see \cite{tamme:excisionrevisited} for its generalization to simplicial $\Q$-algebras)
\begin{thm}[Suslin-Wodzicki, Tamme]
 If $I \to A \to B$ is an extension of (possibly non-unital) simplicial $\Q$-algebras and $I$ is H-unital, then the induced sequence
 \[
  \K(I)\wedge \Q \to \K(A)\wedge \Q \to \K(B)\wedge \Q
 \]
 is a fiber sequence of connective spectra. In particular, if either $A \to B$ admits a section or $I$ is nilpotent, it is also a cofiber sequence.
\end{thm}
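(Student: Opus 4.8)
The plan is to show that excision, i.e.\ that the natural map $\K(I)\wedge\Q \to \bK(f)\wedge\Q$ is an equivalence (where $f\colon A \to B$ has kernel $I$ and $\bK(f) = \hofib(\K(A)\to\K(B))$), can be reduced to the cyclic homology excision already recorded in \autoref{thm:wodzicki}. First I would reduce to the unital case: embedding $I$ as a two-sided ideal of a unital (simplicial) algebra and using that $\K(I)$ is by definition the relative $\K$-theory of $I^+ \to \k$, the assertion becomes the independence of the relative rational $\K$-theory of the pair $(A,I)$ from the ambient algebra $A$. By the Volodin model developed above, $\infloop\bK(f)\simeq X(A,I)^+$, so both sides are connected infinite loop spaces; by Milnor--Moore it then suffices to compare the rational homologies $H_\bullet(X(I^+,I);\Q)$ and $H_\bullet(X(A,I);\Q)$ compatibly with their loop (Hopf algebra) structure.

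The next step is to compute these homologies through Malcev's theorem. Since we work in characteristic $0$, \autoref{prop:malcev} identifies the rational homology of each building block $\B\mathrm{T}^\sigma_n(A,I)$ with the Chevalley--Eilenberg homology $\CE_\bullet^\Q(\mathrm{t}^\sigma_n(A,I))$ of the corresponding nilpotent matrix Lie algebra, compatibly with the standard filtrations. Passing to the colimit over $n$ and over the partial orders $\sigma$, the homology of the Volodin space gets expressed through the homology of $\gl_\infty$ relative to the ideal $I$. Via the generalized trace and the Loday--Quillen--Tsygan equivalence of \autoref{thm:LQT}, this relative Lie-algebra homology is in turn governed by relative cyclic homology $\bHC_\bullet^\Q(f)$; running the same computation for the pair $(I^+,I)$ identifies $H_\bullet(X(I^+,I);\Q)$ with the symmetric algebra on $\HC_\bullet^\Q(I)[1]$.

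Finally, I would invoke the hypothesis that $I$ is H-unital. By \autoref{thm:wodzicki} the canonical map $\HC_\bullet^\Q(I)\to\bHC_\bullet^\Q(f)$ is then a quasi-isomorphism and -- crucially -- it respects exactly the filtrations that Malcev's comparison preserves. Transporting this equivalence back up through the trace and the Malcev identifications yields an isomorphism $H_\bullet(X(I^+,I);\Q)\simeq H_\bullet(X(A,I);\Q)$ of Hopf algebras, hence an equivalence $\K(I)\wedge\Q\simeq\bK(f)\wedge\Q$, which is precisely the desired fiber sequence. For the final clause, a fiber sequence of spectra is automatically a cofiber sequence; the extra hypotheses (a section of $A\to B$, or nilpotence of $I$) only serve to guarantee surjectivity on $\pi_0$, so that the sequence stays within connective spectra and, in the split case, exhibits $\K(A)\wedge\Q\simeq \K(I)\wedge\Q \oplus \K(B)\wedge\Q$.

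The hardest part will be the passage from finite $n$ to $\gl_\infty$ and the organization of the unstable Malcev computations into a single stable statement matching cyclic homology: this requires homological stability for the relevant matrix groups and tight control of the compatible filtrations, and it is exactly here that H-unitality -- an acyclicity condition visible only at the level of the Bar and cyclic complexes -- must be leveraged to force excision all the way up at the level of $\K$-theory.
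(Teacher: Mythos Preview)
The paper does not give a proof of this theorem: it is stated with attribution to Suslin--Wodzicki \cite{suslinwodzicki} (discrete case) and Tamme \cite{tamme:excisionrevisited} (simplicial case), and then used as a black box to derive \autoref{cor:goodwillieHunital}. So there is nothing in the paper to compare your argument against, beyond the references.

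That said, your outline is broadly in the spirit of the original Suslin--Wodzicki proof, but two of the steps you treat as routine are in fact the entire content of their paper. First, the identification $X(A,I)^+ \simeq \infloop\bK(f)$ established in the paper explicitly assumes $\pi_0 I$ is nilpotent in $\pi_0 A$; for a general H-unital $I$ (in particular for the pair $(I^+,I)$) this is not available, and you cannot simply invoke the Volodin model as set up here. Second, and more seriously, the sentence ``via the generalized trace and the Loday--Quillen--Tsygan equivalence, this relative Lie-algebra homology is governed by relative cyclic homology'' hides the main difficulty: \autoref{thm:LQT} computes $\CE_\bullet^\Q(\gl_\infty(A))$, not the colimit $\colim_{n,\sigma}\CE_\bullet^\Q(\mathrm{t}^\sigma_n(A,I))$ that arises from the Volodin space, and passing between the two is exactly where Suslin--Wodzicki do the delicate spectral-sequence work that consumes H-unitality. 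Your final paragraph correctly flags this as the hard part, but it is not a matter of ``homological stability'' --- it is a specific acyclicity computation for the Volodin Lie complex that does not reduce to the absolute LQT statement.

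It is also worth knowing that Tamme's proof (the reference the paper cites for the simplicial generalization) takes an entirely different route, via localizing invariants and a categorical reformulation of excision, and avoids the Volodin/Malcev machinery altogether.
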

We fix a nilpotent extension $f \colon A \twoheadrightarrow B$ of H-unital simplicial $\k$-algebras. We get a commutative diagram whose rows and columns are fiber and cofiber sequences
\[
\begin{tikzcd}
  \bK(f)\wedge \Q \ar{r}{\simeq} \ar{d} & \bK(\k \ltimes f)\wedge \Q \ar{r} \ar{d} & 0 \ar{d} \\
  \K(A)\wedge \Q \ar{r} \ar{d} & \K(\k \ltimes A)\wedge \Q \ar{r} \ar{d} & \K(\k)\wedge \Q \ar{d}[sloped, above]{\simeq} \\
  \K(B)\wedge \Q \ar{r} & \K(\k \ltimes B)\wedge \Q \ar{r} & \K(\k)\wedge \Q, 
\end{tikzcd}
\]
where the functor $\k \ltimes -$ formally adds a ($\k$-linear) unit\footnote{So $\k \ltimes -$ is the left adjoint to the forgetful functor $\sAlg_\k \to \sAlg_\k^\mathrm{nu}$}. We get a natural equivalence 
\[
 \bK(f)\wedge \Q \simeq \bK(\k \ltimes f)\wedge \Q.
\]
Similarly, using \autoref{thm:wodzicki}, we have $\bHC^\Q_\bullet(f)[1] \simeq \bHC^\Q_\bullet(\k \ltimes f)[1]$. \autoref{thm:goodwillie} thus leads to the following

\begin{cor}\label{cor:goodwillieHunital}
 There is a functorial equivalence
\[
 \bch \colon \bK(f)\wedge \Q \to^\sim \bHC^\Q_\bullet(f)[1],
\]
in the more general case of $f \colon A \to B$ a nilpotent extension of H-unital simplicial $\Q$-algebras (that coincides with the relative Chern character when both $A$ and $B$ are unital).
\end{cor}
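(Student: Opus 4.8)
The plan is to deduce the H-unital case from the already-established unital case (\autoref{thm:goodwillie}) by formally adjoining a unit. Writing $I = \ker(f)$ for the (levelwise) kernel of the nilpotent extension $f$, I apply the functor $\k \ltimes -$, which produces honestly unital simplicial algebras $\k \ltimes A$ and $\k \ltimes B$ together with a surjection $\k \ltimes f \colon \k \ltimes A \to \k \ltimes B$ whose kernel is again $I$; in particular $\k \ltimes f$ is still a nilpotent extension, so \autoref{thm:goodwillie} applies and $\bch \colon \bK(\k\ltimes f)\wedge\Q \to \bHC^\Q_\bullet(\k\ltimes f)[1]$ is an equivalence. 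It therefore suffices to exhibit natural equivalences $\bK(f)\wedge\Q \simeq \bK(\k\ltimes f)\wedge\Q$ and $\bHC^\Q_\bullet(f) \simeq \bHC^\Q_\bullet(\k\ltimes f)$ compatible with the respective Chern characters.

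For the $\K$-theory side, I would organize the two split augmentations $A \to \k\ltimes A \to \k$ and $B \to \k \ltimes B \to \k$ into a commutative $3\times3$ diagram of connective spectra (smashed with $\Q$), as displayed above. Its two bottom rows are the excision fiber sequences of the Suslin--Wodzicki--Tamme theorem: since $A$ and $B$ are \emph{themselves} H-unital, the sequences $\K(A)\wedge\Q \to \K(\k\ltimes A)\wedge\Q \to \K(\k)\wedge\Q$ and its analogue for $B$ are fiber sequences (indeed cofiber sequences, as $\k \ltimes A \to \k$ and $\k \ltimes B \to \k$ admit the unit as a section). Its three columns are fiber sequences: the first two by the very definition of relative $\K$-theory, and the third because the induced self-map of $\K(\k)\wedge\Q$ is the identity. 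Reading the diagram as a map of fiber sequences from the $A$-row to the $B$-row, the fibers of the vertical maps again form a fiber sequence; this is exactly the top row $\bK(f)\wedge\Q \to \bK(\k\ltimes f)\wedge\Q \to 0$, which yields the equivalence $\bK(f)\wedge\Q \simeq \bK(\k\ltimes f)\wedge\Q$.

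The cyclic-homology side is entirely parallel, with \autoref{thm:wodzicki} playing the role of the Suslin--Wodzicki--Tamme theorem. The H-unitality of $A$ and $B$ turns $\HC^\Q_\bullet(A) \to \HC^\Q_\bullet(\k\ltimes A) \to \HC^\Q_\bullet(\k)$ and the analogous sequence for $B$ into fiber sequences, and running the same $3\times3$ argument (equivalently, applying $\hofib$ along $f$ to the split augmentation sequences) produces $\bHC^\Q_\bullet(f) \simeq \bHC^\Q_\bullet(\k\ltimes f)$, hence $\bHC^\Q_\bullet(f)[1] \simeq \bHC^\Q_\bullet(\k\ltimes f)[1]$ after the shift.

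Finally, I would assemble the square
\[
\begin{tikzcd}
 \bK(f)\wedge\Q \arrow[r, "\bch"] \arrow[d, "\simeq"] & \bHC^\Q_\bullet(f)[1] \arrow[d, "\simeq"] \\
 \bK(\k\ltimes f)\wedge\Q \arrow[r, "\bch"] & \bHC^\Q_\bullet(\k\ltimes f)[1]
\end{tikzcd}
\]
whose vertical arrows are the two equivalences just built and whose commutativity is the naturality of the Chern character along the morphism of extensions $(A\to B)\to(\k\ltimes A\to \k\ltimes B)$. The bottom arrow is an equivalence by \autoref{thm:goodwillie}, so the top arrow is too; and the same naturality square shows that this $\bch$ restricts to the usual relative Chern character when $A$ and $B$ are already unital. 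The genuinely nontrivial inputs are the cited excision theorems (Suslin--Wodzicki--Tamme for $\K$-theory and Wodzicki for cyclic homology) together with Goodwillie's theorem, so the only point demanding care is the bookkeeping of the $3\times3$ diagrams --- in particular, observing that the Suslin--Wodzicki--Tamme and Wodzicki hypotheses are met by taking the H-unital algebras $A$ and $B$ themselves as the relevant ideals, and that every construction is natural enough in $f$ for the final square to commute.
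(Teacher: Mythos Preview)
Your proposal is correct and follows essentially the same route as the paper: adjoin a unit via $\k \ltimes -$, use Suslin--Wodzicki--Tamme excision (applied to the H-unital ideals $A$ and $B$ in the split extensions $\k\ltimes A \to \k$ and $\k\ltimes B \to \k$) to identify $\bK(f)\wedge\Q$ with $\bK(\k\ltimes f)\wedge\Q$ via the same $3\times3$ diagram, do the parallel argument with Wodzicki's excision for cyclic homology, and conclude by Goodwillie's theorem in the unital case. The only cosmetic difference is that the paper leaves the cyclic-homology $3\times3$ diagram implicit, simply citing \autoref{thm:wodzicki}.
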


\section{Formal deformation problems}

Infinitesimal deformations of algebraic objects can be encoded by a tangential structure on the moduli space classifying those objects. In \cite{pridham:deformation} and \cite{lurie:dagx}, Pridham and Lurie established an equivalence between so-called formal moduli problems and differential graded Lie algebras.
This section starts by recalling Pridham and Lurie's works. We then establish some basic facts about abelian or linear formal moduli problems.

\subsection{Formal moduli problems and dg-Lie algebras}

\begin{df}\label{df:schlessinger}
 Let $\dgArt_\k \subset \quot{\cdga_\k}{\k}$ denote the full subcategory of augmented connective $\k$-cdga's spanned by Artinian\footnote{Recall that $A$ is called Artinian if its cohomology is finite dimensional over $\k$, and if $\homol^0(A)$ is a local ring.} ones.
 Let $\Cc$ be an $\infty$-category with finite limits.
 
 We say that a functor $F \colon \dgArt_\k \to \Cc$ satisfies the \ref{cond:schlessinger} if
 \begin{enumerate}[label=(S),ref=Schlessinger condition {\rm (S)}]
  \item \label{cond:schlessinger} $\left\{\parbox{0.9\textwidth}{%
 \begin{enumerate}[label=(S\arabic*),ref=Schlessinger condition {\rm (S\arabic*)}]
  \item\label{cond:S1} The object $F(\k)$ is final in $\Cc$.
  \item\label{cond:S2} \parbox{0.8\textwidth}{For any map $A \to B \in \dgArt_\k$ that is surjective on $\homol^0$, the induced morphism $F(\k \times_B A) \to F(\k) \times_{F(B)} F(A)$ is an equivalence.}
 \end{enumerate}}\right.$
 \end{enumerate}
\end{df}
 
 \begin{df}
 A pre-FMP (pre-formal moduli problem) is a functor $\dgArt_\k \to \sSets$.
 We denote by $\PFMP_\k$ their category.
 A FMP (formal moduli problem) is a pre-FMP $F$ satisfying the \ref{cond:schlessinger}. We denote by $\FMP_\k$ the category of formal moduli problems, and by $\incl \colon \FMP_\k \to \PFMP_\k$ the inclusion functor.
\end{df}

\begin{ex}\label{ex:artinstacksareFMP}
 Let $X$ be an Artin (and possibly derived) stack and $x \in X$ be a $\k$-point. The functor $B \mapsto \{ x\} \times_{X(\k)} X(B)$ defined on Artinian dg-algebras satisfies the Schlessinger condition (see \cite{toen:hagii} for instance). It thus defines a formal moduli problem.
\end{ex}

The category $\PFMP_\k$ is presentable, and the full subcategory $\FMP_\k$ is strongly reflexive. In particular, the inclusion $\incl  \colon \FMP_\k \to \PFMP_\k$ admits a left adjoint.
\begin{df}
 We denote by $\form \colon \PFMP_\k \to \FMP_\k$ the left adjoint to the inclusion $\incl$. We call it the formalization functor.
\end{df}

\begin{df}
 A shifted dg-Lie algebra over $\k$ is a complex $V$ together with a dg-Lie algebra structure on $V[-1]$. We denote by $\dgLie_\k^{\Omega}$ the $\infty$-category of shifted dg-Lie algebras.
\end{df}

\begin{rmq}
 The notation $\Omega$ is here to remind us the shift in the Lie structure. Note that shifting a complex $V$ by $-1$ amounts to computing its (pointed) loop space $\Omega V \simeq V[-1]$.
\end{rmq}

\begin{thm}[Pridham, Lurie]\label{thm:pridhamlurie}
The functor $\T \colon \FMP_\k \to \dgMod_\k$ (computing the tangent complex) factors through the forgetful functor $\dgLie_\k^\Omega \to \dgMod_\k$. In other words, the tangent complex $\T F$ of a formal moduli problem admits a natural shifted Lie structure. Moreover, the functor $F \mapsto \T F$ induces an equivalence
 \[
  \FMP_\k \simeq \dgLie_\k^{\Omega}.
 \]
\end{thm}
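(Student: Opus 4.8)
The plan is to build an explicit quasi-inverse $\Psi\colon \dgLie_\k^{\Omega} \to \FMP_\k$ to $\T$ and then prove it is an equivalence. Fix the convention that a shifted dg-Lie algebra is a complex $V$ whose desuspension $\g := V[-1]$ is a genuine dg-Lie algebra. For $B \in \dgArt_\k$, the augmentation ideal $\Aug(B)$ is a finite-dimensional nilpotent complex, so $\g \otimes_\k \Aug(B)$ is a nilpotent dg-Lie algebra; the Deligne--Getzler--Hinich construction then yields a Maurer--Cartan space, and I set
\[
 \Psi(\g)(B) := \mathrm{MC}\!\left(\g \otimes_\k \Aug(B)\right) \in \sSets,
\]
functorial in both $B$ and $\g$. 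This construction is available precisely because $\mathrm{char}(\k) = 0$: the exponential and Baker--Campbell--Hausdorff expressions underlying the Maurer--Cartan theory require denominators. Equivalently, via Koszul duality one has $\Psi(\g)(B) \simeq \Map(\CE^\bullet(\g), B)$, the mapping space of augmented cdgas out of the Chevalley--Eilenberg cochain algebra $\CE^\bullet(\g)$.

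First I would check that $\Psi(\g)$ is an FMP. Condition \ref{cond:S1} is immediate, as $\Aug(\k) = 0$ forces $\Psi(\g)(\k) = \ast$. For \ref{cond:S2}, given $A \to B$ surjective on $\homol^0$, the induced map $\g \otimes_\k \Aug(A) \to \g \otimes_\k \Aug(B)$ is a surjection of nilpotent dg-Lie algebras, which the Maurer--Cartan functor sends to a Kan fibration. Hence the strict identification $\Aug(\k \times_B A) \simeq \Aug(A) \times_{\Aug(B)} 0$ is carried by $\mathrm{MC}$ to the genuine homotopy fiber product, giving the required equivalence $\Psi(\g)(\k \times_B A) \simeq \Psi(\g)(\k) \times_{\Psi(\g)(B)} \Psi(\g)(A)$. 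Fibrancy of the Maurer--Cartan space is the only nontrivial input here.

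Next I would compute the tangent complex and so establish the first assertion. By construction $\T F$ is assembled out of the values of $F$ on the square-zero extensions $\k \oplus \k[n]$, the deloopings being supplied by \ref{cond:S2}. Since $\Aug(\k \oplus \k[n]) \simeq \k[n]$ has vanishing bracket, $\Psi(\g)(\k \oplus \k[n]) = \mathrm{MC}(\g \otimes_\k \k[n])$ is a space with $\pi_m \simeq \homol^{-m}(\g[n])$; assembling over $n$ gives $\T\Psi(\g) \simeq \g[1] = V$. Naturality of this identification in $\g$ shows that the shifted Lie bracket transported along it agrees with the original one, so $\T \circ \Psi$ is the forgetful functor. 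This is exactly the statement that $\T F$ carries a natural shifted Lie structure, once we know $\Psi$ is essentially surjective.

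The main obstacle is the equivalence itself, namely that $\Psi$ is fully faithful and essentially surjective. I would invoke Lurie's axiomatic framework: the contravariant functor $\CE^\bullet$ is part of an adjunction with a Koszul-dual ``tangent Lie algebra'' functor $\mathfrak{D}$ on augmented Artinian cdgas, and the task reduces to verifying that $(\CE^\bullet, \mathfrak{D})$ is a \emph{deformation theory} in his sense. The substantive axioms are that $\CE^\bullet$ takes the tautological square-zero extensions of Artinian cdgas to (co)free Lie objects and that the induced map on tangent complexes is the shift computed above; granting them, his representability theorem forces $\Psi$ to be fully faithful with essential image all of $\FMP_\k$. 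Concretely, essential surjectivity amounts to reconstructing an arbitrary $F$ from its tangent Lie algebra $\T F$ by a cobar/Maurer--Cartan resolution, and full faithfulness can be reduced to free shifted Lie algebras $\mathrm{Free}(\k[n])$, whose images are the representable ($\Spec$-type) moduli problems that generate $\FMP_\k$ under the appropriate colimits. An alternative is Pridham's inductive route, matching $F$ with $\Psi(\T F)$ stage by stage along the filtration of $\dgArt_\k$ by square-zero extensions; either way, this comparison is where essentially all of the technical work resides.
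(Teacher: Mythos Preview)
The paper does not actually prove this theorem: it is attributed to Pridham and Lurie and cited as \cite{pridham:deformation,lurie:dagx}. What the paper does is recall the construction of the inverse equivalence: it introduces the adjunction $\Dd_\k \dashv \CE^\bullet_\k$ between $\dgLie_\k\op$ and augmented cdgas, sets $\e(V)(B) = \Map_{\dgLie_\k}(\Dd_\k(B), V[-1])$, and records the key Lemma that $B \to \CE^\bullet_\k(\Dd_\k(B))$ is an equivalence for $B$ Artinian. No further argument is given.

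Your sketch is correct and is essentially the same Koszul-duality story, just packaged on the other side of the adjunction. Where the paper writes the inverse as $\Map_{\dgLie_\k}(\Dd_\k(B), \g)$, you write it as $\mathrm{MC}(\g \otimes_\k \Aug(B)) \simeq \Map_{\cdgaunbounded^\mathrm{aug}_\k}(\CE^\bullet(\g), B)$; these agree once one knows $B \simeq \CE^\bullet_\k(\Dd_\k(B))$, which is exactly the lemma the paper isolates. Your explicit verification of the Schlessinger condition via fibrancy of Maurer--Cartan spaces and your appeal to Lurie's deformation-theory axioms (or Pridham's induction along square-zero extensions) are the standard routes and go beyond what the paper spells out. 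The one thing the paper makes more explicit than you do is the role of the unit $B \to \CE^\bullet_\k(\Dd_\k(B))$ being an equivalence on Artinians; this is the technical heart of full faithfulness in Lurie's argument, and you might flag it as such rather than folding it into the general invocation of the axiomatic framework.
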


\begin{df}
 We denote by $\ell$ the composite functor $\ell := \T \circ \form \colon \PFMP_\k \to \dgLie_\k^\Omega$, and by $\e \colon \dgLie_\k^\Omega \to^\sim \FMP_\k$ the inverse functor $\T^{-1}$.
\end{df}

\begin{ex}\label{ex:tgtcoincides}
 Let $X$ be a smooth scheme and $x \in X$ a $\k$-point. \autoref{ex:artinstacksareFMP} yields an associated formal moduli problem whose tangent complex is $\mathrm T_{X,x}$, the tangent space of $X$ at $x$. The shifted Lie structure is the trivial.
 
 More interestingly, if $G$ is a smooth group scheme, then its classifying stack $\B G$ is an Artin stack. The tangent complex to the associated formal moduli problem is $\g[1] = \mathrm T_{G, 1}[1]$, the shift of the Lie algebra of $G$. The shifted Lie algebra structure on $\g[1]$ is the usual Lie algebra structure on $\g$. This will be discussed in more detail in \autoref{sec:tracemap} below.
\end{ex}

Let us recall briefly how the equivalence in \autoref{thm:pridhamlurie} is constructed.
Consider the Chevalley-Eilenberg cohomological functor
\[
 \CE^\bullet_\k \colon \dgLie\op_\k \to \quot{\cdgaunbounded_\k}{\k}.
\]
It admits a left adjoint, denoted by $\Dd_\k$, so that for any $L$ and $B$, we have
\[
 \Map_{\dgLie_\k}(L, \Dd_\k(B)) \simeq \Map_{\cdgaunbounded_\k^\mathrm{aug}}(B, \CE^\bullet_\k(L)).
\]
The equivalence $\e \colon \dgLie_\k^\Omega \to^\sim \FMP_\k$ is then given on $V \in \dgLie_\k^\Omega$ by the formula
\[
 \e(V)(B) := \Map_{\dgLie_\k}(\Dd_\k(B), V[-1]).
\]
Proving this construction indeed defines an equivalence is based on the following key lemma
\begin{lem}[see{ \cite[2.3.5]{lurie:dagx}}]\label{lem:BsimCDB}
 For $B \in \dgArt_\k$, the adjunction morphism 
 \[
	B \to \CE^\bullet_\k(\Dd_\k(B))
 \]
  is an equivalence.
\end{lem}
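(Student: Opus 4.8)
The plan is to show that the unit map $u_B\colon B \to \CE^\bullet_\k(\Dd_\k(B))$ of the adjunction $\Dd_\k \dashv \CE^\bullet_\k$ is an equivalence by d\'evissage over $\dgArt_\k$: verify it on the generating square-zero extensions $\k \oplus \k[n]$, then propagate it along the elementary extensions out of which every Artinian algebra is assembled. Let $\Cc \subseteq \dgArt_\k$ be the full subcategory of those $B$ for which $u_B$ is an equivalence; the goal is $\Cc = \dgArt_\k$. The first ingredient is the structural input that every $B \in \dgArt_\k$ is built from $\k$ by finitely many elementary (small) extensions, i.e.\ lies at the top of a finite tower in which each stage fits into a homotopy pullback square
\[
\begin{tikzcd}
B' \arrow[r] \arrow[d] & \k \arrow[d] \\
B'' \arrow[r] & \k \oplus \k[n+1]
\end{tikzcd}
\]
with $B''$ "smaller", the bottom-right corner a trivial square-zero extension, and the right vertical the augmentation section. (This is the standard presentation of Artinian algebras, and is where finite-dimensionality of the cohomology of $B$ is used.)

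The base computation is to check $\k \in \Cc$ and $\k \oplus \k[n] \in \Cc$ for every $n \geq 0$. For $\k$ this is immediate, since $\Dd_\k(\k) \simeq 0$ (a left adjoint preserves initial objects) and $\CE^\bullet_\k(0) \simeq \k$. For $\k \oplus \k[n]$, I would compute $\Dd_\k(\k \oplus \k[n])$ directly from the defining adjunction: it is the free shifted dg-Lie algebra on a single generator in degree $n-1$, and the elementary Koszul-duality calculation $\CE^\bullet_\k(\mathrm{Free}(\k[n-1])) \simeq \k \oplus \k[n]$ shows $u_{\k\oplus\k[n]}$ is an equivalence. This is a hands-on computation with $\Sym$ and the Chevalley--Eilenberg differential, carried out once for all $n$, so that no downward induction on the shift is needed.

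The propagation step is then: assuming $B'' \in \Cc$, deduce $B' \in \Cc$. The identity functor sends the square above to a pullback square tautologically, and its three corners $B''$, $\k$, $\k \oplus \k[n+1]$ lie in $\Cc$ by the inductive hypothesis and the base computation; hence it suffices to prove that $\CE^\bullet_\k \circ \Dd_\k$ also sends the square to a pullback square of augmented cdgas. Comparing the two pullbacks via the natural transformation $u$ then forces $u_{B'}$ to be an equivalence.

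I expect this last point to be the main obstacle, precisely because $\Dd_\k$ is a left adjoint and does not preserve limits in general, so the preservation of the square is not formal. I would resolve it in two movements: (i) show that $\Dd_\k$ carries this particular pullback to a pushout of dg-Lie algebras, $\Dd_\k(B') \simeq \Dd_\k(B'') \amalg_{\Dd_\k(\k \oplus \k[n+1])} 0$, exploiting that $\Dd_\k(\k\oplus\k[n+1])$ is free, so that the pullback of square-zero extensions is Koszul-dual to coning off the corresponding generator; and (ii) show that $\CE^\bullet_\k$ converts this pushout back into the expected fiber product, using the multiplicativity of Chevalley--Eilenberg cochains (that $\CE^\bullet_\k$ of a Lie-algebra pushout is computed by the corresponding derived tensor product, which here collapses --- because one corner is $0$ and another is free --- exactly to the square-zero fiber product $B'' \times_{\k\oplus\k[n+1]} \k$). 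Neither (i) nor (ii) follows from abstract adjunction alone; together they constitute the crux of the argument.
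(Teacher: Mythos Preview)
The paper does not give its own proof of this lemma; it simply cites \cite[2.3.5]{lurie:dagx}. Your d\'evissage strategy is essentially the argument found there, so the overall approach is sound. Two points are worth correcting.

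First, your step~(ii) is in fact formal. The paper sets up $\CE^\bullet_\k \colon \dgLie_\k\op \to \quot{\cdgaunbounded_\k}{\k}$ as the right adjoint in the pair $(\Dd_\k, \CE^\bullet_\k)$; equivalently, the contravariant functor $\CE^\bullet_\k \colon \dgLie_\k \to (\quot{\cdgaunbounded_\k}{\k})\op$ is a left adjoint. Either way, $\CE^\bullet_\k$ automatically carries pushouts of dg-Lie algebras to pullbacks of augmented cdgas. (Your aside that ``$\CE^\bullet_\k$ of a Lie-algebra pushout is computed by the corresponding derived tensor product'' is the statement for the homological $\CE_\bullet$, not for $\CE^\bullet$; the cohomological functor produces a fiber product, which is precisely what you need.) So the entire difficulty lives in step~(i).

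Step~(i) --- showing that $\Dd_\k$ takes the elementary pullback square to a pushout in $\dgLie_\k$ --- is indeed where the work is, and your idea of exploiting freeness of $\Dd_\k(\k \oplus \k[n+1])$ is the right one. Carrying it out cleanly, however, typically uses either an explicit model for $\Dd_\k$ (as the free Lie algebra on $\Aug(B)^\vee[-1]$ equipped with the Koszul differential, so that passing to the fiber on the cdga side becomes a cell attachment on the Lie side) or, equivalently, a parallel induction on the Lie side showing that the counit $L \to \Dd_\k(\CE^\bullet_\k(L))$ is an equivalence for ``good'' $L$ built by finitely many cell attachments. Either route is what Lurie's proof supplies; your sketch of ``coning off the generator'' is the right intuition but needs one of these to be made precise.

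A minor correction in the base case: with the paper's cohomological conventions, $\Dd_\k(\k \oplus \k[n])$ is the free Lie algebra on $\k[-n-1]$ (a generator in cohomological degree $n+1$), not degree $n-1$.
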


In what follows, we will need slightly more general versions of formal moduli problems, namely formal moduli problems with values in an $\infty$-category such as complexes of vectors spaces or connective spectra.
\begin{df}\label{df:FMPC}
Let $\Cc$ be an $\infty$-category with all finite limits.
A $\Cc$-valued pre-FMP is a functor $\dgArt_\k \to \Cc$. A $\Cc$-valued formal moduli problem  (or FMP) is a $\Cc$-valued pre-FMP satisfying the \ref{cond:schlessinger}.

We denote by $\PFMP^\Cc_\k$ the category of $\Cc$-valued pre-FMPs, by $\FMP^\Cc_\k$ the category of $\Cc$-valued FMPs. We also denote by $\iC \colon \FMP^\Cc_\k \to \PFMP^\Cc_\k$ the inclusion functor.
\end{df}

\begin{lem}\label{lem:iCleftadjoint}
If $\Cc$ is a presentable $\infty$-category, then $\PFMP_\k^\Cc$ and $\FMP_\k^\Cc$ are presentable categories, and $\iC$ admits a left adjoint.
\end{lem}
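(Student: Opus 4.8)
The plan is to exhibit $\FMP_\k^\Cc$ as the full subcategory of $S$-local objects of $\PFMP_\k^\Cc$ for a suitable small set of morphisms $S$, and then invoke the standard theory of localizations of presentable $\infty$-categories. First, $\PFMP_\k^\Cc = \Fct(\dgArt_\k, \Cc)$ is presentable: the $\infty$-category $\dgArt_\k$ is essentially small (its objects have finite-dimensional cohomology) and $\Cc$ is presentable, so the functor category is presentable. For each $A \in \dgArt_\k$ the evaluation functor $\mathrm{ev}_A \colon \PFMP_\k^\Cc \to \Cc$ admits a left adjoint $\mathcal{R}_A^{(-)} \colon \Cc \to \PFMP_\k^\Cc$, namely the left Kan extension along $\{A\} \hookrightarrow \dgArt_\k$, which is given by $\mathcal{R}_A^X = \Map_{\dgArt_\k}(A,-) \otimes X$ where $\otimes$ denotes the copower of $\Cc$ over $\sSets$. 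Thus $\Map(\mathcal{R}_A^X, F) \simeq \Map_\Cc(X, F(A))$ for every $F$, which is the engine that will convert objectwise conditions into mapping-space locality conditions.

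Since $\Cc$ is presentable, I would fix a small set $G$ of objects generating $\Cc$ under colimits; then an object of $\Cc$ is terminal (resp. a morphism is an equivalence) if and only if $\Map_\Cc(X,-)$ detects it as such for all $X \in G$. Using this I translate each part of the \ref{cond:schlessinger}. For \ref{cond:S1}: the object $F(\k)$ is terminal if and only if $\Map_\Cc(X, F(\k)) \simeq *$ for all $X \in G$, i.e. if and only if $F$ is local with respect to the maps $\emptyset \to \mathcal{R}_\k^X$ (with $\emptyset$ the initial object of $\PFMP_\k^\Cc$). For \ref{cond:S2}: given $A \to B$ surjective on $\homol^0$, write $P := \k \times_B A$; the contravariant assignment $C \mapsto \mathcal{R}_C^X$ carries the defining pullback square into a canonical map $\mathcal{R}_A^X \sqcup_{\mathcal{R}_B^X} \mathcal{R}_\k^X \to \mathcal{R}_P^X$, and applying $\Map(-,F)$ recovers, after mapping out of $X$, the comparison $F(P) \to F(\k) \times_{F(B)} F(A)$. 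Hence $F$ satisfies \ref{cond:S2} for this square if and only if it is local with respect to these maps as $X$ ranges over $G$; ranging also over the essentially small set of relevant morphisms $A \to B$ yields the full condition.

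Collecting these, I set $S = \{\emptyset \to \mathcal{R}_\k^X\} \cup \{\mathcal{R}_A^X \sqcup_{\mathcal{R}_B^X} \mathcal{R}_\k^X \to \mathcal{R}_P^X\}$ with $P = \k \times_B A$, a small set of morphisms of $\PFMP_\k^\Cc$, and by the previous paragraph $\FMP_\k^\Cc$ is exactly the full subcategory of $S$-local objects. By the general theory of localizations, the $S$-local objects of a presentable $\infty$-category again form a presentable $\infty$-category and the inclusion admits a left adjoint (the localization functor); applied to $\PFMP_\k^\Cc$ this shows at once that $\FMP_\k^\Cc$ is presentable and that $\iC$ has a left adjoint. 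The step requiring the most care is the faithful translation of \ref{cond:S1}--\ref{cond:S2}, which a priori assert equivalences in $\Cc$, into genuine mapping-space locality conditions; this is precisely what the generators $G$ together with the adjunction $\Map(\mathcal{R}_A^X,-) \simeq \Map_\Cc(X, \mathrm{ev}_A -)$ accomplish, and it is where the presentability of $\Cc$ is essential. Equivalently, one could argue more abstractly via the identification $\PFMP_\k^\Cc \simeq \PFMP_\k \otimes \Cc$ in $\mathrm{Pr}^{\mathrm{L}}$ together with the compatibility of the reflective localization $\form$ with the functor $- \otimes \Cc$.
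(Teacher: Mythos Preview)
Your proof is correct and complete. It takes a somewhat different route from the paper's, so a short comparison is warranted.

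You argue by explicitly exhibiting a small set $S$ of morphisms in $\PFMP_\k^\Cc$ (built from the corepresentables $\mathcal{R}_A^X = \Map_{\dgArt_\k}(A,-)\otimes X$ for $X$ in a generating set of $\Cc$) such that $\FMP_\k^\Cc$ is exactly the full subcategory of $S$-local objects, and then invoke the standard fact that $S$-local objects in a presentable $\infty$-category form a reflective presentable subcategory. The paper instead argues more abstractly: it writes $\FMP_\k^\Cc$ as an intersection $\bigcap_{(\sigma)} \Dd_{(\sigma)}$ of full subcategories indexed by the relevant cartesian squares $(\sigma)$, and shows each $\Dd_{(\sigma)}$ is strongly reflexive by pulling back the strongly reflexive subcategory of pullback squares in $\Fct(\Delta^1\times\Delta^1,\Cc)$ along the restriction functor $\sigma^*$ (which admits a left adjoint by left Kan extension), citing \cite[5.5.4.17--5.5.4.19]{lurie:htt}.

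Your approach has the advantage of producing an explicit localizing set, which in principle gives a concrete handle on the left adjoint $\LC$; the paper's approach avoids choosing generators of $\Cc$ and is purely by citation. Both are standard and essentially equivalent in spirit. Your closing remark about the identification $\PFMP_\k^\Cc \simeq \PFMP_\k \otimes \Cc$ in $\mathrm{Pr}^{\mathrm L}$ is also a perfectly good third route.
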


\begin{df}
 In the above situation, we will denote by $\LC$ the left adjoint to $\iC$.
\end{df}

\begin{proof}[of \autoref{lem:iCleftadjoint}]
 The presentability of $\PFMP_\k^\Cc$ is \cite[5.5.3.6]{lurie:htt}.
 Remains to prove that $\FMP_\k^\Cc \subset \PFMP_\k^\Cc$ is a strongly reflexive category (see \cite[p.482]{lurie:htt}).
 Fix a cartesian square $(\sigma)$ in $\dgArt_\k$
\[
\begin{tikzcd}
  C \ar[r] \ar[d] & A \ar[d, "f"] \\ \k \ar[r] & B \ar[ul, phantom, "\displaystyle (\sigma)"]
\end{tikzcd}
\]
where $f$ is surjective on $\homol^0$. We denote by $\Dd_{(\sigma)} \subset \PFMP_\k^\Cc$ the full subcategory spanned by functors $F$ mapping $(\sigma)$ to a pullback square.
Since $\FMP_\k^\Cc = \bigcap_{(\sigma)} \Dd_{(\sigma)}$ and because of \cite[5.5.4.18]{lurie:htt}, it suffices to prove that $D_{(\sigma)}$ is strongly reflexive in $\PFMP_\k^\Cc$.
 
Restriction along $(\sigma)$ defines a functor $\sigma^* \colon \PFMP_\k^{\Cc} \to \Fct(K,\Cc)$ where $K = \Delta^1 \times \Delta^1$ is the square.
 The functor $\sigma^*$ admits a left adjoint $\sigma_!$ (namely the left Kan extension functor, see \cite[4.3.3.7]{lurie:htt}). A functor $F$ belongs to $D_{(\sigma)}$ if and only if $\sigma^*(F)$ is a pullback square. By \cite[5.5.4.19]{lurie:htt}, the full subcategory $\Fct(K,\Cc)$ spanned by pullback squares is strongly reflexive. It thus follows from \cite[5.5.4.17]{lurie:htt} that $D_{(\sigma)}$ is strongly reflexive in $\PFMP^\Cc_\k$. 
\end{proof}

We conclude this section by recording a functoriality statement, whose proof is straightforward and left to the reader.

\begin{lem}\label{lem:functoriality}
Let $f \colon \Cc \rightleftarrows \Dd \noloc g$ be an adjunction between presentable $\infty$-categories.
\begin{enumerate}
\item Composing with $g$ induces a functor $g_* \colon \PFMP_\k^\Dd \to \PFMP_\k^\Cc$ that maps formal moduli problems to formal moduli problems.
\item The induced functor $g_* \colon \FMP_\k^\Dd \to \FMP_\k^\Cc$ admits a left adjoint $f_!$ given by the composition $f_! = \form^\Dd \circ f_* \circ \iC$, where $f_* \colon \PFMP_\k^\Cc \to \PFMP_\k^\Dd$ is the functor given by composing with $f$.
\item We have $f_! \circ \LC \simeq \form^\Dd \circ f_*$.
\end{enumerate}
\end{lem}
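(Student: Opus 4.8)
The plan is to reduce everything to three formal facts: that the right adjoint $g$ preserves the finite limits appearing in the Schlessinger conditions, that an adjunction is preserved by postcomposition, and that formalization is a localization. For \emph{part (i)}, I would first observe that $g_* \colon \PFMP_\k^\Dd \to \PFMP_\k^\Cc$ is literally postcomposition with $g$, so $(g_*F)(B) = g(F(B))$. Since $g$ is a right adjoint between presentable $\infty$-categories it preserves all limits, in particular the terminal object and all pullbacks. Checking the two clauses of \ref{cond:schlessinger} separately: \ref{cond:S1} holds because $g(F(\k))$ is final in $\Cc$ whenever $F(\k)$ is final in $\Dd$; and \ref{cond:S2} holds because applying $g$ to the equivalence $F(\k \times_B A) \xrightarrow{\sim} F(\k) \times_{F(B)} F(A)$ and using that $g$ preserves this pullback yields the required equivalence $(g_*F)(\k \times_B A) \xrightarrow{\sim} (g_*F)(\k) \times_{(g_*F)(B)} (g_*F)(A)$. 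Thus $g_*$ carries FMPs to FMPs.

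The key input for the rest is that the adjunction $f \dashv g$ induces, by whiskering, a postcomposition adjunction $f_* \dashv g_*$ between the pre-FMP categories $\PFMP_\k^\Cc \rightleftarrows \PFMP_\k^\Dd$ (here $f_*$ denotes postcomposition with the left adjoint $f$). Granting this, \emph{part (ii)} is a formal adjunction chase: writing $\incl_\Dd$ for the inclusion $\FMP_\k^\Dd \to \PFMP_\k^\Dd$ with left adjoint $\form^\Dd$, I would compute for $F \in \FMP_\k^\Cc$ and $G \in \FMP_\k^\Dd$
\begin{align*}
\Map_{\FMP_\k^\Dd}(f_!F, G) &= \Map_{\FMP_\k^\Dd}(\form^\Dd f_* \iC F, G) \\
&\simeq \Map_{\PFMP_\k^\Dd}(f_* \iC F, \incl_\Dd G) \\
&\simeq \Map_{\PFMP_\k^\Cc}(\iC F, g_* \incl_\Dd G) \\
&\simeq \Map_{\PFMP_\k^\Cc}(\iC F, \iC g_* G) \\
&\simeq \Map_{\FMP_\k^\Cc}(F, g_* G),
\end{align*}
using in order the adjunction $\form^\Dd \dashv \incl_\Dd$, the postcomposition adjunction $f_* \dashv g_*$, the identity $g_* \circ \incl_\Dd \simeq \iC \circ g_*$ (both send $G$ to $g \circ G$, which lands in $\FMP_\k^\Cc$ by part (i)), and the full faithfulness of $\iC$. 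By the Yoneda lemma this exhibits $f_!$ as left adjoint to $g_*$.

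For \emph{part (iii)} I would run the same five adjunctions, reassembled in a different order. Starting from an arbitrary $H \in \PFMP_\k^\Cc$ and $G \in \FMP_\k^\Dd$, one gets
\[
 \Map_{\FMP_\k^\Dd}(f_! \LC H, G) \simeq \Map_{\FMP_\k^\Cc}(\LC H, g_* G) \simeq \Map_{\PFMP_\k^\Cc}(H, \iC g_* G)
\]
by $f_! \dashv g_*$ (part (ii)) and $\LC \dashv \iC$, and then, via $\iC g_* \simeq g_* \incl_\Dd$, the postcomposition adjunction $f_* \dashv g_*$, and finally $\form^\Dd \dashv \incl_\Dd$, this continues to $\Map_{\PFMP_\k^\Dd}(f_* H, \incl_\Dd G) \simeq \Map_{\FMP_\k^\Dd}(\form^\Dd f_* H, G)$. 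Yoneda then gives the natural equivalence $f_! \LC H \simeq \form^\Dd f_* H$.

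I do not expect any genuine obstacle here: the statement is essentially bookkeeping. The only points deserving care are the existence of the postcomposition adjunction $f_* \dashv g_*$ at the $\infty$-categorical level (standard, since adjunctions are stable under whiskering) and the verification in part (i) that $g$ preserves the relevant finite limits, which is automatic for a right adjoint. Once part (i) supplies the identification $\iC \circ g_* \simeq g_* \circ \incl_\Dd$, parts (ii) and (iii) are pure adjunction chases, which is presumably why the authors leave the proof to the reader.
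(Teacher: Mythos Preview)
Your proposal is correct and is exactly the kind of routine adjunction bookkeeping the authors have in mind; indeed, the paper does not give a proof at all but explicitly leaves it to the reader as ``straightforward.'' Your verification of part (i) via preservation of limits by the right adjoint $g$, and the Yoneda/adjunction chases for (ii) and (iii) using the identification $\iC \circ g_* \simeq g_* \circ \incl_\Dd$, are the standard way to fill this in.
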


\subsection{Abelian moduli problems}\label{subsec:abelianfmp}

The moduli problem of concern in this article is constructed from the (connective) $\K$-theory functor. In particular, its values are endowed with an abelian group structure, or equivalently are connective spectra. We shall therefore establish a couple of basic properties of those abelian formal moduli problems.

\paragraph{Abelian formal moduli problems}
Denote by $\FMP_\k^\Ab$ (resp. $\PFMP_\k^\Ab$) the category of abelian group objects (i.e. group-like $\E_\infty$-monoids) in (pre-)formal moduli problems.
We call their objects abelian (pre-)formal moduli problems.

Forgetting the abelian group structure and the free abelian group functor form an adjunction
\[
\FAb \colon \FMP_\k \rightleftarrows \FMP_\k^\Ab \noloc \GAb.
\]
We also have a similar adjunction
\[
 \infsusp \colon \PFMP_\k \rightleftarrows \PFMP_\k^\Ab \noloc \infloop.
\]
Note the categories $\PFMP_\k^\Ab$ and $\FMP^\Ab_\k$ identify with $\PFMP_\k^\Cc$ (resp. $\FMP_\k^\Cc$) for $\Cc = \Sp_{\geq 0}$ the category of connective spectra (see \cite[Rmk. 5.2.6.26]{lurie:halg}). In particular, the above adjunction is given by applying point-wise the functors $\infsusp \colon \sSets \rightleftarrows \Sp_{\geq 0} \noloc \infloop$. Finally, we denote by $\iAb \colon \FMP_\k^\Ab \to \PFMP_\k^\Ab$ the inclusion functor, and by $\LAb$ its left adjoint.

\begin{rmq}
We have a Beck-Chevalley transformation $\form \circ \operatorname{\infloop} \to \GAb \circ \LAb$. It is in general not an equivalence. In particular, an abelian pre-FMPs, such as the $\K$-theory functor, will have two different associated formal moduli problems, and two different tangent Lie algebras (one of which will automatically be abelian, see below). 
\end{rmq}

\paragraph{Abelian dg-Lie algebras}

It follows from \autoref{thm:pridhamlurie} that the category $\FMP^\Ab_\k$ is equivalent to the category $\dgLie_\k^{\Omega,\Ab}$ of abelian group objects in $\dgLie^\Omega_\k$. We shall call objects of $\dgLie_\k^{\Omega,\Ab}$ \emph{abelian dg-Lie algebras}.

Although the following statement is well know to the community, we could not locate a proof in the literature. We therefore provide one.
\begin{prop}\label{prop:abelianlie}
The forgetful functor 
\[
\dgLie_\k^{\Omega,\Ab} \to \dgMod_\k
\]
is an equivalence. In particular $\FMP_\k^\Ab \simeq \dgLie_\k^{\Omega,\Ab} \simeq \dgMod_\k$.
\end{prop}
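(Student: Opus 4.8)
The plan is to reduce the statement to a single computation about abelian group objects and then run an Eckmann--Hilton argument. Since the preceding discussion already identifies $\FMP_\k^\Ab \simeq \dgLie_\k^{\Omega,\Ab}$ via \autoref{thm:pridhamlurie} (applied to abelian group objects on both sides), the only genuinely new content is the equivalence $\dgLie_\k^{\Omega,\Ab} \simeq \dgMod_\k$, so I will concentrate on the forgetful functor $\Phi \colon \dgLie_\k^{\Omega,\Ab} \to \dgMod_\k$.

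First I would record two structural facts. The underlying-complex functor $U \colon \dgLie_\k^\Omega \to \dgMod_\k$ preserves finite products (a product of shifted dg-Lie algebras has underlying complex the direct sum, equipped with the componentwise bracket), hence induces a functor on abelian group objects $\mathrm{Ab}(U)$; composing with the canonical equivalence $\mathrm{Ab}(\dgMod_\k) \simeq \dgMod_\k$ -- valid because $\dgMod_\k$ is stable, hence additive, so every object carries an essentially unique abelian group structure -- yields $\Phi$. Conversely, the trivial-bracket functor $t \colon \dgMod_\k \to \dgLie_\k^\Omega$, sending $V$ to $V$ with $V[-1]$ given the zero bracket, also preserves finite products and satisfies $U t \simeq \id$; passing to abelian group objects it produces $\Psi \colon \dgMod_\k \simeq \mathrm{Ab}(\dgMod_\k) \to \dgLie_\k^{\Omega,\Ab}$ with $\Phi \Psi \simeq \id$. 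This already gives essential surjectivity of $\Phi$, and reduces the proposition to showing $\Psi \Phi \simeq \id$, i.e. that the shifted Lie structure of any abelian group object is (coherently) trivial.

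The heart of the argument is the following Eckmann--Hilton computation, which is the shifted/derived analogue of the fact that the Lie algebra of an abelian group is abelian. Let $L$ be an abelian group object with addition $m \colon L \times L \to L$, a morphism of shifted dg-Lie algebras, and write $\g = L[-1]$. The unit axiom gives $m \circ \iota_1 \simeq \id_L \simeq m \circ \iota_2$, where $\iota_1, \iota_2 \colon L \to L \times L$ are the two canonical maps induced by the identity and the unit $0 \to L$. In the product Lie algebra the cross-bracket vanishes identically, $[(x,0),(0,y)] = 0$; applying the Lie morphism $m$ together with the unit identities then forces, at the level of the bracket morphism, $[x,y] \simeq [m \iota_1 x, m \iota_2 y] = m[\iota_1 x, \iota_2 y] = 0$, so the bracket $\g \otimes_\k \g \to \g$ is nullhomotopic. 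I would then upgrade this to the statement that the entire shifted-Lie structure, together with its abelian group structure, is equivalent to a trivial one in the image of $\Psi$.

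The main obstacle is precisely this last upgrade: turning the vanishing of the binary bracket into a \emph{coherent} equivalence $\Psi\Phi \simeq \id$ of abelian group objects, i.e. controlling all the higher homotopies of the dg-Lie structure at once rather than just the bracket. I expect the cleanest way to handle this is to prove $\Phi$ fully faithful directly by comparing mapping spaces: since the forgetful functors to $\dgLie_\k^\Omega$ and then $U$ are conservative and limit-preserving and $\dgMod_\k$ is stable, a morphism of abelian group objects is an equivalence as soon as it is one on underlying complexes, and the comparison map $\Map_{\dgLie_\k^{\Omega,\Ab}}(L,L') \to \Map_{\dgMod_\k}(\Phi L, \Phi L')$ can be analysed through the simplicial bar resolution of the $\E_\infty$-monoid structure, where the vanishing cross-bracket computed above makes every correction term disappear. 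Combined with the essential surjectivity already obtained, this yields the desired equivalence and hence $\FMP_\k^\Ab \simeq \dgLie_\k^{\Omega,\Ab} \simeq \dgMod_\k$.
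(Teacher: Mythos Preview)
Your approach is genuinely different from the paper's, and the gap you yourself flag (``the main obstacle'') is real and not closed by what you wrote. The Eckmann--Hilton computation you give shows only that the binary bracket $\g\otimes\g\to\g$ is nullhomotopic. In the $\infty$-categorical setting this does \emph{not} force the Lie structure to be trivial: an $L_\infty$-algebra can have $l_2\simeq 0$ while $l_3,l_4,\dots$ are nonzero, and your argument says nothing about those. Your proposed fix --- ``analysed through the simplicial bar resolution of the $\E_\infty$-monoid structure, where the vanishing cross-bracket \dots\ makes every correction term disappear'' --- is a hope, not an argument; it does not explain why the higher $l_n$ (or equivalently the higher coherences in the mapping-space comparison) vanish.

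The paper bypasses this coherence problem entirely with a stabilization argument. It identifies $\dgLie_\k^{\Omega,\Ab}$ with the limit of the tower $\cdots\xrightarrow{\Omega}\dgLie_\k^\Omega\xrightarrow{\Omega}\dgLie_\k^\Omega$ (via the chain $\FMP_\k^\Ab\simeq\lim_n\FMP_\k^{\geq n}$ together with the key lemma that each $\FMP_\k^{\geq n}\to\FMP_\k$ is an equivalence), and then computes the unit of the induced free--forgetful adjunction in that limit: for $g(V)=\mathrm{FreeLie}(V[-1])[1]$, the map $V\to\colim_n\Omega^n g(V[n])$ is a quasi-isomorphism because the $p$-th free-Lie word in $V[n]$ lives in degrees $\leq p(m-n)+n$ (for $V$ perfect, bounded above by $m$), so after applying $\Omega^n$ and letting $n\to\infty$ only the linear term survives. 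This connectivity estimate is exactly what kills the higher brackets in one stroke, with no Eckmann--Hilton bookkeeping. If you want to salvage your route, you would need an argument of comparable strength --- for instance, showing that the iterated $\E_\infty$-multiplication forces each $l_p$ to be null by a $p$-fold version of your cross-bracket trick, and then assembling these nullhomotopies coherently. That is substantially harder than what you have written.
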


\begin{df}
 We denote by $\TAb \colon \FMP_\k^\Ab \to^\sim \dgMod_\k$ the equivalence of the above proposition. We denote by $\eAb \colon \dgMod_\k \to^\sim \FMP_\k^\Ab$ its inverse.
\end{df}

\begin{proof}[of the proposition]
For any pointed category, we denote by $\Omega$ its pointed loop space endofunctor.
Denote by $\FMP_\k^{\geq n}$ the category of FMP in $(n-1)$-connective spaces. In particular, we have $\FMP_\k^{\geq 0} = \FMP_\k$.
The inclusion of $(n-1)$-connective spaces into all spaces induces a fully faithful functor $\FMP_\k^{\geq n} \to \PFMP_\k$. We denote by $u_n$ the composite
\[
 u_n \colon \FMP_\k^{\geq n} \lra \PFMP_\k \lra^\form \FMP_\k.
\]
\begin{lem}
 The functor $u_n$ is an equivalence.
\end{lem}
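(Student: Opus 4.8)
The plan is to exhibit an explicit right adjoint to $u_n$ and to check that the unit and counit of the resulting adjunction are equivalences. Write $\tau_{\ge n}\colon \sSets_* \to \sSets_{\ge n}$ for the $(n-1)$-connected cover, i.e. the right adjoint to the inclusion of $(n-1)$-connected pointed spaces (recall each $F(B)$ is canonically pointed by $F(\k)=\ast$). Since $\tau_{\ge n}$ preserves finite limits, postcomposition with it carries a FMP to one internal to $\sSets_{\ge n}$, so it defines $v_n\colon \FMP_\k \to \FMP_\k^{\ge n}$, $v_n = \tau_{\ge n}\circ(-)$. First I would record that $u_n$ is left adjoint to $v_n$: writing $\iota_n$ for the fully faithful functor $\FMP_\k^{\ge n}\to\PFMP_\k$ of the statement, and using that $\form$ is left adjoint to $\incl$ and then the cover adjunction objectwise (legitimate because each $F(B)$ is $(n-1)$-connected), one obtains for $F\in\FMP_\k^{\ge n}$ and $H\in\FMP_\k$
\[ \Map_{\FMP_\k}(u_nF,H) \simeq \Map_{\PFMP_\k}(\iota_nF,\incl H) \simeq \Map_{\PFMP_\k}(\iota_nF,\tau_{\ge n}\incl H) \simeq \Map_{\FMP_\k^{\ge n}}(F,v_nH). \]
It then suffices to prove that the unit and counit of $u_n\dashv v_n$ are equivalences.

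For the counit $u_nv_nH = \form\,\iota_n\tau_{\ge n}H \to H$ I would argue on tangent complexes. Since $\form$ is the localization inverting exactly the maps that induce an equivalence on $\ell=\T\circ\form$, and $\dgLie_\k^\Omega\to\dgMod_\k$ is conservative, it is enough to see that $\ell(\iota_n\tau_{\ge n}H)\to\ell(\incl H)=\T H$ is a quasi-isomorphism. The underlying complex of $\ell(G)$ is the spectrum associated with the prespectrum $m\mapsto G(\k\oplus\k[m])$, whose structure maps come from $\k\oplus\k[m]\simeq\k\times_{\k\oplus\k[m+1]}\k$; for a genuine FMP these maps are equivalences and recover $\T H$. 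Applying $\tau_{\ge n}$ only alters homotopy in degrees $<n$, and this is washed out upon stabilization: for each fixed degree $j$ and all $m$ large enough the relevant homotopy group of $\tau_{\ge n}H(\k\oplus\k[m])$ agrees with that of $H(\k\oplus\k[m])$. Hence the map is a quasi-isomorphism and the counit is an equivalence.

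The remaining, and main, point is that the unit is an equivalence, which I would deduce from the conservativity of $u_n$: by a triangle identity $u_n(\eta_F)$ is an equivalence (the counit being one), so conservativity of $u_n$ forces $\eta_F$ to be an equivalence for every $F$. To prove $u_n=\form\,\iota_n$ conservative, suppose $\phi\colon F\to F'$ in $\FMP_\k^{\ge n}$ becomes an equivalence after $\form\,\iota_n$; as above this means $\phi$ induces an isomorphism on the stabilized tangent data. The internal Schlessinger condition for objects of $\FMP_\k^{\ge n}$ makes the prespectrum $m\mapsto F(\k\oplus\k[m])$ an $\Omega$-spectrum in degrees $\ge n$, so an isomorphism on the stabilization forces $\phi$ to be an equivalence on every square-zero value $F(\k\oplus\k[m])$ (the homotopy in degrees $<n$ vanishing by connectivity). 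Finally, every $B\in\dgArt_\k$ is built from $\k$ by a finite chain of square-zero extensions, i.e. of pullbacks $\k\times_{\k\oplus\k[m]}(-)$; propagating the square-zero case along this filtration via the (internal) Schlessinger squares shows $\phi$ is an equivalence on all of $\dgArt_\k$. The base case $n=0$ is trivial, as $\FMP_\k^{\ge 0}=\FMP_\k$, $\tau_{\ge 0}=\id$ and $\form\circ\incl\simeq\id$. The technical heart — and the step most likely to need care — is this last propagation, namely controlling $\form$ (hence $\ell$) finely enough on non-square-zero Artinian algebras; everything else is formal once the cover adjunction and the stabilization description of $\ell$ are in place.
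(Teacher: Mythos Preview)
Your strategy of exhibiting $v_n=\tau_{\ge n}$ as a right adjoint is natural, but the argument has a genuine gap that affects both the counit and the conservativity steps. You assert that for a pointed pre-FMP $G$, the underlying complex of $\ell(G)=\T\form(G)$ is the spectrification of the prespectrum $m\mapsto G(\k\oplus\k[m])$. This holds when $G$ is already an FMP, but it is \emph{false} for general pointed pre-FMPs, and there is no reason it should hold for $G=\iota_n\tau_{\ge n}H$, which does not satisfy Schlessinger in $\sSets$ (only in $\sSets_{\ge n}$, where pullbacks differ). The two functors cannot agree: restricted to pointed pre-FMPs, $\ell$ is still a left adjoint and hence preserves coproducts (pointwise wedges), while the forgetful $U\colon\dgLie_\k^\Omega\to\dgMod_\k$ does not; on the other hand the assignment $G\mapsto \text{spectrification of }(G(\k\oplus\k[m]))_m$ does send wedges to direct sums. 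Concretely, take $H=\e(\theta\,\k)$, so $H(\k\oplus\k[m])\simeq K(\k,m)$, and set $G=H\vee H$. Then $\ell(G)\simeq \ell(H)\amalg\ell(H)$ is the free product in $\dgLie_\k^\Omega$ of two copies of the abelian line, whose underlying complex is infinite-dimensional (it contains $[x,y]$, $[x,[x,y]]$, \dots), whereas the spectrification of $(K(\k,m)\vee K(\k,m))_m$ is simply $\k\oplus\k$. Since your computation of $\ell(\iota_n\tau_{\ge n}H)$ and your reduction of conservativity to ``isomorphism on stabilized tangent data'' both rest on this claim, the proof does not go through as written.

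For comparison, the paper avoids ever computing $\form$ on a non-FMP. It factors $u_n$ as
\[
\FMP_\k^{\ge n}\xrightarrow{\ \Omega^n\ }\operatorname{Mon}_{\E_n}^{\mathrm{gp}}(\FMP_\k)\xrightarrow{\ \form\circ\B^n\ }\FMP_\k,
\]
observes that the first arrow is an equivalence by Lurie's recognition principle applied pointwise, and invokes a result of Blanc--Katzarkov--Pandit to the effect that the pointwise $\Omega^n\colon\FMP_\k\to\operatorname{Mon}_{\E_n}^{\mathrm{gp}}(\FMP_\k)$ is already an equivalence with inverse $\form\circ\B^n$. The composite recovers $u_n$ because $\B^n\Omega^n$ on $(n-1)$-connected spaces is the identity. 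This sidesteps precisely the difficulty you ran into: one never has to identify $\form$ of a functor that fails Schlessinger in $\sSets$.
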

\begin{proof}
For any $\infty$-category $\Cc$ with finite products, denote by $\operatorname{Mon}_{\E_n}^\mathrm{gp}(\Cc)$ the $\infty$-category of group-like $\E_n$-monoids in $\Cc$ (see \cite[Def. 5.2.6.6]{lurie:halg}). By \cite[Thm. 5.2.6.10]{lurie:halg}, the $n$-th loop space defines a (point-wise) equivalence $\Omega^n \colon \FMP_\k^{\geq n} \to \operatorname{Mon}_{\E_n}^\mathrm{gp}(\FMP_\k)$.

It follows from \cite[Prop. 2.15]{blanckatzarkovpandit} that taking the $n$-th loop space also defines an equivalence $\FMP_\k \to \operatorname{Mon}_{\E_n}^\mathrm{gp}(\FMP_\k)$. The inverse first applies point-wise the $n$-fold delooping $\B^n$, and then applies $\form$ to the obtained functor.
The composition
\[
 \FMP_\k^{\geq n} \lra^{\Omega^n} \operatorname{Mon}_{\E_n}^\mathrm{gp}(\FMP_\k) \lra^{\form \circ \B^n} \FMP_\k
\]
is then homotopic to $u_n$, and is an equivalence.
\end{proof}

We continue our proof of \autoref{prop:abelianlie}. The forgetful functor $f \colon \dgLie_\k^{\Omega} \to \dgMod_\k$ commutes with limits (and thus with $\Omega$). We get a commutative diagram, where the leftmost column is obtained by taking the limit of the rows
\[
\begin{tikzcd}[row sep=small, column sep=small]
 \Cc_1 := \lim \FMP_\k^{\geq n} \phantom{:= \Cc_1} \ar[phantom, yshift=0.1em]{d}[sloped]{\simeq}[xshift=1em]{\scriptstyle u_\infty} &
 \hspace{2em} \cdots \ar{r}{\Omega} &
 \FMP_\k^{\geq 2} \ar{r}{\Omega} \ar[phantom, yshift=0.1em]{d}[sloped]{\simeq}[xshift=1em]{\scriptstyle u_2} & \FMP_\k^{\geq 1} \ar{r}{\Omega} \ar[phantom, yshift=0.1em]{d}[sloped]{\simeq}[xshift=1em]{\scriptstyle u_1} & \FMP_\k^{\geq 0} \ar[phantom, yshift=0.1em]{d}[sloped]{\simeq}[xshift=1em]{\scriptstyle u_0}
  \\
  \Cc_2 := \lim \FMP_\k \phantom{:= \Cc_2} \ar[phantom]{d}[sloped]{\simeq}[xshift=1em]{\scriptstyle \T_\infty} & \hspace{2em} \cdots \ar{r}{\Omega} & \FMP_\k \ar{r}{\Omega} \ar[phantom]{d}[sloped]{\simeq}[xshift=1em]{\scriptstyle \T} & \FMP_\k \ar{r}{\Omega} \ar[phantom]{d}[sloped]{\simeq}[xshift=1em]{\scriptstyle \T} & \FMP_\k \ar[phantom]{d}[sloped]{\simeq}[xshift=1em]{\scriptstyle \T}
  \\
  \Cc_3 := \lim \dgLie^\Omega_\k \phantom{:= \Cc_3} \ar{d}{f_\infty} & \hspace{2em} \cdots \ar{r}{\Omega} & \dgLie_\k^\Omega \ar{r}{\Omega} \ar{d}{f} & \dgLie_\k^\Omega \ar{r}{\Omega} \ar{d}{f} & \dgLie_\k^\Omega \ar{d}{f} 
  \\
  \Cc_4 := \lim \dgMod_\k \phantom{:= \Cc_4} & \hspace{2em} \cdots \ar{r}{\Omega}[swap]{\sim} & \dgMod_\k \ar{r}{\Omega}[swap]{\sim} & \dgMod_\k \ar{r}{\Omega}[swap]{\sim} & \dgMod_\k.
\end{tikzcd}
\]
Since the category $\dgMod_\k$ is stable, the projection on the rightmost component is an equivalence $\Cc_4 \simeq \dgMod_\k$.
The category $\Cc_1$ identifies with the category of formal moduli problems in the limit category $\lim (\cdots \to \sSets_{\geq 1} \to \sSets_{\geq 0}) \simeq \Sp_{\geq 0}$. By \cite[Rmk. 5.2.6.26]{lurie:halg}, $\Cc_1$ is equivalent to $\FMP_\k^\Ab$, and therefore $\Cc_3 \simeq \dgLie_\k^{\Omega,\Ab}$. Moreover, the equivalence $\T_\infty$ is homotopic to the equivalence $\FMP_\k^\Ab \simeq \dgLie_\k^{\Omega,\Ab}$ induced directly from $\T \colon \FMP_\k \simeq \dgLie_\k^\Omega$ by taking abelian group objects on both sides.
The projections on the rightmost component $\Cc_1 \to \FMP_\k^{\geq 0} = \FMP_\k$ and $\Cc_3 \to \dgLie_\k^\Omega$ identify with the functors forgetting the abelian group structure, while the functor $f_\infty \colon \dgLie_\k^{\Omega,\Ab} \simeq \Cc_3 \to \Cc_4 \simeq \dgMod_\k$ is the forgetful functor.

Remains to prove that $f_\infty$ is an equivalence. Since $f$ is conservative, so is $f_\infty$. The functor $f_\infty$ is the limit of functors with left adjoints, and therefore it admits a left adjoint $g_\infty$. Denote by $g$ the left adjoint of $f$. Up to a shift, the functor $g$ identifies with the free Lie algebra functor:
\[
 g(V) \simeq \mathrm{FreeLie}(V[-1]) [1].
\]
For a fixed $V \in \dgMod_\k \simeq \Cc_4$, the adjunction unit $V \to f_\infty \circ g_\infty(V)$ identifies with the canonical map
\[
\phi_V \colon  V \to \colim_n \Omega^{n}(g(V[n])).
\]
The full subcategory of $\dgMod_\k$ spanned by $V$'s such that $\phi_V$ is an equivalence is stable under filtered colimits. We may thus assume $V$ to be perfect, concentrated in (cohomological) degrees lower than some integer $m$.
Fix $i \in \Z$. For $n > m + i$, the cohomology group $\homol^i(\Omega^{n}(g(V[n])))$ is independent of $n$ and isomorphic to $\homol^i(V)$. In particular, the map $\phi_V$ is a quasi-isomorphism.
\end{proof}

\begin{df}
 We denote by $\CEO \colon \dgLie_\k^\Omega \to \dgMod_\k$ the functor mapping a shifted dg-Lie algebra $L$ to the Chevalley-Eilenberg complex of its shift $\CE_\bullet(L[-1])$.
 
 We denote by $\bCEO \colon \dgLie_\k^\Omega \to \dgMod_\k$ the functor mapping a shifted dg-Lie algebra $L$ to the reduced Chevalley-Eilenberg complex of its shift $\bCE_\bullet(L[-1])$.
 \end{df}
\begin{df}
 We denote by $\theta \colon \dgMod_\k \simeq \dgLie_\k^{\Omega,\mathrm{Ab}} \to \dgLie_\k^{\Omega}$ the functor forgetting the abelian group structure.
\end{df}

\begin{lem}
 The functor $\theta \colon \dgMod_\k \to \dgLie_\k^\Omega$ identifies with the functor mapping a complex to itself with the trivial bracket.
 As a consequence, $\bCEO$ is left adjoint to $\theta$.
\end{lem}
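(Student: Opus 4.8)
The plan is to prove the two assertions in turn, the second following formally from the first together with a classical computation of the Chevalley--Eilenberg homology of free Lie algebras.

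\emph{The identification $\theta \simeq \mathrm{triv}$.} Write $\mathrm{triv} \colon \dgMod_\k \to \dgLie_\k^\Omega$ for the functor equipping a complex with the zero bracket. First I would observe that $\mathrm{triv}$ preserves finite products: the product of two shifted dg-Lie algebras is the direct sum of underlying complexes with the product bracket, and a product of trivial brackets is again trivial. Since $\dgMod_\k$ is stable, it is canonically equivalent to its own $\infty$-category of abelian group objects, so a finite-product-preserving functor out of it lifts to abelian group objects; this produces a lift $\mathrm{triv}^{\Ab} \colon \dgMod_\k \to \dgLie_\k^{\Omega,\Ab}$. Composing with the forgetful equivalence $f_\infty \colon \dgLie_\k^{\Omega,\Ab} \to^\sim \dgMod_\k$ of \autoref{prop:abelianlie} recovers the identity, because the underlying complex of $\mathrm{triv}(V)$ is $V$. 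As $f_\infty$ is an equivalence, $\mathrm{triv}^{\Ab}$ is its inverse, i.e. it is precisely the equivalence $\dgMod_\k \simeq \dgLie_\k^{\Omega,\Ab}$; forgetting the abelian group structure then gives $\theta \simeq \mathrm{triv}$.

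\emph{Reduction of the adjunction.} Given the first part, proving $\bCEO \dashv \theta$ amounts to a natural equivalence $\Map_{\dgMod_\k}(\bCEO(L),V) \simeq \Map_{\dgLie_\k^\Omega}(L,\mathrm{triv}(V))$. Unshifting via the relabelling equivalence $\dgLie_\k^\Omega \simeq \dgLie_\k$, $L \mapsto L[-1]$, the right-hand side becomes the space of maps from the dg-Lie algebra $\mathfrak g := L[-1]$ into the abelian Lie algebra $V[-1]$. Since $\mathrm{triv} \colon \dgMod_\k \to \dgLie_\k$ preserves limits and filtered colimits, it is accessible and limit-preserving, hence admits a left adjoint $\lambda$ (the derived abelianization). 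The claim is thus equivalent to a natural equivalence $\bCEO \simeq \lambda$, up to the recorded shift.

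\emph{Comparison on free objects.} Both functors preserve sifted colimits: $\lambda$ is a left adjoint, while $\bCEO = \bCE_\bullet((-)[-1])$, whose underlying graded object is $\overline{\Sym}(-)$ with the Chevalley--Eilenberg differential, preserves sifted colimits because in characteristic $0$ each $\Sym^n$ is a retract of the $n$-th tensor power and the forgetful functor $\dgLie_\k^\Omega \to \dgMod_\k$ creates sifted colimits. Every shifted dg-Lie algebra is the geometric realization of the bar resolution of the free/forgetful monadic adjunction $\dgMod_\k \rightleftarrows \dgLie_\k^\Omega$, so it suffices to compare on a free shifted Lie algebra $L = \mathrm{FreeLie}(V[-1])[1]$. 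There the derived abelianization returns the indecomposables $V$, while $\bCEO(L) = \bCE_\bullet(\mathrm{FreeLie}(V[-1])) \simeq V$ by the classical acyclicity of the Chevalley--Eilenberg complex of a free Lie algebra (its reduced homology is concentrated in the linear part). A natural comparison map is provided by the inclusion of the linear part $\mathfrak g[1] \hookrightarrow \bCE_\bullet(\mathfrak g)$, which I would use as the unit $L \to \theta\,\bCEO(L)$: although it does not strictly annihilate brackets, the Chevalley--Eilenberg differential exhibits every bracket $[x,y]$ as a boundary, so this inclusion is a map of shifted dg-Lie algebras into the trivial one up to coherent homotopy. Being an equivalence on free algebras and compatible with sifted colimits, it is a natural equivalence, which yields the adjunction.

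\emph{Main obstacle.} The only genuinely delicate points are the bookkeeping of the shift --- the $\Omega$-twist is exactly what makes the adjunction hold without an extra suspension --- and the derived, rather than strict, nature of the left adjoint: the unit is the linear inclusion only because the Chevalley--Eilenberg differential makes brackets nullhomotopic, and making this precise is where the Koszul duality between the Lie and cocommutative operads, equivalently the free-Lie-algebra computation, does the real work.
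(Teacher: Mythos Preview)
Your argument for $\theta \simeq \mathrm{triv}$ is correct and takes a different route from the paper's. The paper builds an explicit comparison map: from the formula $\theta(V) \simeq \colim_n \Omega^{n-1}(g(V[n-1]))$ obtained in the proof of \autoref{prop:abelianlie} together with the natural projection $g(W) \to \mathrm{triv}(W)$ collapsing free brackets, one gets a morphism $\theta(V) \to \mathrm{triv}(V)$ whose image under the conservative forgetful functor is already known to be an equivalence. Your approach is more structural---$\mathrm{triv}$ preserves finite products, hence lifts to a section of the forgetful equivalence $f_\infty$, hence is its inverse---and is equally valid. Yours is slicker; the paper's produces an actual morphism one can point to.

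For the adjunction, your overall strategy (reduce to free objects via sifted colimits) is sound, but the construction of the unit is where the gap lies. The inclusion $\mathfrak g[1] \hookrightarrow \bCE_\bullet(\mathfrak g)$ is \emph{not} a morphism of shifted dg-Lie algebras into an abelian one, and your phrase ``up to coherent homotopy'' is precisely what has to be constructed rather than asserted: supplying those higher homotopies amounts to building an $L_\infty$-morphism, which is real work you have not done. The paper sidesteps this entirely by observing that $\bCEO$ is the left derived functor of the $1$-categorical abelianization $L \mapsto L/[L,L]$, which is already the strict left adjoint to $\mathrm{triv}$; deriving the adjunction is then formal. This uses the same classical input you invoke (the reduced Chevalley--Eilenberg complex of a free Lie algebra collapses to its generators), but packages it so that no homotopy-coherent unit needs to be built by hand---the relevant natural transformation is the strict quotient $L \twoheadrightarrow L/[L,L]$, which is a genuine Lie map on the nose.
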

\begin{proof}
Denote by $h(V)$ the (shifted) dg-Lie algebra with trivial bracket built on $V \in \dgMod_\k$. 
 Since $g$ is given as a free (shifted) dg-Lie algebra, there is a canonical morphism $g(V) \to h(V)$ given by collapsing the free brackets.
 By construction, the functor $\theta$ is given by the formula
 \[
  \theta(V) \simeq \colim_n \Omega^{n-1}(g(V[n-1])) \in \dgLie_\k^\Omega.
 \]
 In particular, we find a functorial morphism in $\dgLie_\k^\Omega$
 \[
  \theta(V) \simeq \colim_n \Omega^{n-1}(g(V[n-1])) \to \colim_n \Omega^{n-1}(h(V[n-1])) \simeq h(V).
 \]
 We have already seen in the proof of \autoref{prop:abelianlie} that the image by $f$ of this morphism is an equivalence. The first result follows by conservativity of $f$.
 
 For the second statement, we simply observe that $\bCEO$ is the left derived functor of the abelianization functor $L \mapsto \quot{L}{[L,L]}$, which is left adjoint to $h$.
\end{proof}

\paragraph{Description of the equivalence $\FMP_\k^\Ab \simeq \dgMod_\k$}

We will give a more explicit description of the equivalence of \autoref{prop:abelianlie}. The definition implies that $\TAb \colon \FMP_\k^\Ab \to^\sim \dgMod_\k$ simply computes the tangent complex (at the only $\k$-point). Let us also describe its inverse $\eAb$.

\begin{lem}\label{lem:abelianization}
 Let $B \in \dgArt_\k$ and $X = \Map(B,-) \in \FMP_\k$.
 We have
 \[
  \T \FAb(X) \simeq \dual{\Aug(B)},
 \]
where $\Aug$ computes the augmentation ideal of a given Artinian, and $\dual{(-)}$ computes the $\k$-linear dual.
\end{lem}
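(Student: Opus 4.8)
The plan is to pin down $\T\FAb(X)=\TAb\FAb(X)$ (these agree, since $\GAb$ is pointwise $\infloop$ and the tangent complex only sees underlying spaces) by testing it against an arbitrary $W\in\dgMod_\k$ and identifying the functor it corepresents, using the equivalence $\TAb\colon\FMP_\k^\Ab\simeq\dgMod_\k$ of \autoref{prop:abelianlie} together with the explicit inverse $\eAb=\e\circ\theta$.

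First I would run the chain of adjunctions. Since $\TAb$ and $\eAb$ are mutually inverse equivalences, for every $W$ we have
\[
\Map_{\dgMod_\k}(\T\FAb(X),W)\simeq\Map_{\FMP_\k^\Ab}(\FAb(X),\eAb(W))\simeq\Map_{\FMP_\k}(X,\GAb\eAb(W)),
\]
the last step being the $\FAb\dashv\GAb$ adjunction. Now $X=\Map(B,-)$ is the FMP corepresented by $B$, and $\GAb\eAb(W)$ is again an FMP by \autoref{lem:functoriality}(1); since $\FMP_\k\subset\PFMP_\k$ is full, the covariant Yoneda lemma identifies the right-hand side with $\bigl(\GAb\eAb(W)\bigr)(B)=\infloop\bigl(\eAb(W)(B)\bigr)$.

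Next I would compute $\eAb(W)(B)$ explicitly. Writing $\eAb(W)=\e(\theta(W))$ and using the Lurie–Pridham formula $\e(V)(B)=\Map_{\dgLie_\k}(\Dd_\k(B),V[-1])$, the equivalence $[1]\colon\dgLie_\k\simeq\dgLie_\k^\Omega$ and the adjunction $\bCEO\dashv\theta$ established above give
\[
\eAb(W)(B)\simeq\Map_{\dgLie_\k}(\Dd_\k(B),\theta(W)[-1])\simeq\Map_{\dgMod_\k}\bigl(\bCEO(\Dd_\k(B)[1]),W\bigr)=\Map_{\dgMod_\k}\bigl(\bCE_\bullet^\k(\Dd_\k(B)),W\bigr),
\]
where the last equality is the definition $\bCEO(L)=\bCE_\bullet^\k(L[-1])$. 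Applying $\infloop$ does not change this mapping space. It then remains to identify $\bCE_\bullet^\k(\Dd_\k(B))\simeq\dual{\Aug(B)}$: by \autoref{lem:BsimCDB} the unit $B\to\CE^\bullet_\k(\Dd_\k(B))$ is an equivalence, hence so is $\Aug(B)\simeq\bCE^\bullet_\k(\Dd_\k(B))$ on reduced parts, and since $B$ is Artinian the Lie algebra $\Dd_\k(B)$ has finite-dimensional cohomology, so linear duality over $\k$ yields $\bCE_\bullet^\k(\Dd_\k(B))\simeq\dual{\bigl(\bCE^\bullet_\k(\Dd_\k(B))\bigr)}\simeq\dual{\Aug(B)}$.

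Combining these steps produces, naturally in $W$, an equivalence $\Map_{\dgMod_\k}(\T\FAb(X),W)\simeq\Map_{\dgMod_\k}(\dual{\Aug(B)},W)$, whence $\T\FAb(X)\simeq\dual{\Aug(B)}$ by the Yoneda lemma in $\dgMod_\k$. The main obstacle I expect is the final identification: one must carefully reconcile the shift $V\mapsto V[-1]$ in the Lurie–Pridham formula with the shift built into $\bCEO$ and into the adjunction $\bCEO\dashv\theta$, and justify the duality $\bCE_\bullet^\k\simeq\dual{\bCE^\bullet_\k}$ in the derived, Artinian setting (via the finiteness of the cohomology of $\Dd_\k(B)$) rather than merely for discrete finite-dimensional Lie algebras.
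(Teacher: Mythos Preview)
Your proof is correct and follows essentially the same route as the paper: both arguments use the adjunction $\bCEO \dashv \theta$ (which, under the equivalences $\T$ and $\TAb$, corresponds to $\FAb \dashv \GAb$), then invoke \autoref{lem:BsimCDB} and finite-dimensionality to identify $\bCE_\bullet^\k(\Dd_\k(B))$ with $\dual{\Aug(B)}$. The paper simply compresses your Yoneda unwinding into the one-line observation that $\TAb\FAb(X)\simeq\bCEO(\T X)$, whereas you spell out the mapping-space computation explicitly; the ingredients and the logical structure are the same.
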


\begin{proof}
Since $B$ is Artinian, we deduce from \autoref{lem:BsimCDB} that $B$ is canonically equivalent to the Chevalley-Eilenberg cohomology of it (shifted) tangent Lie algebra $\T X$.
It follows that the tangent Lie algebra of $\FAb X$ is the dg-module $\bCEO(\T X) \simeq \dual{\Aug (B)}$.
\end{proof}

\begin{prop}\label{prop:calceAb}
 The functor $\eAb$ is equivalent to the functor mapping $V \in \dgMod_\k$ to the abelian formal moduli problem
 \[
  B \mapsto \left(\Aug(B) \otimes_\k V\right)^{\leq 0},
 \]
 where $\Aug$ computes the augmentation ideal of a given Artinian cdga and $(-)^{\leq 0}$ truncates the given complex (and considers it as a connective spectrum through the Dold-Kan equivalence).
\end{prop}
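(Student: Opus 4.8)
The plan is to exhibit the explicit functor $\Phi \colon \dgMod_\k \to \FMP_\k^\Ab$ sending $V$ to $\Phi_V$, where $\Phi_V(B) := (\Aug(B) \otimes_\k V)^{\leq 0}$, and to identify it with $\eAb$ by exploiting that $\eAb$ is the inverse of the tangent-complex equivalence $\TAb$. Since $\TAb$ is an equivalence, it is enough to produce a natural equivalence $\TAb \circ \Phi \simeq \id_{\dgMod_\k}$; composing on the left with $\eAb$ and using $\eAb \circ \TAb \simeq \id$ then gives $\Phi \simeq \eAb$. The proof thus splits into two independent checks: first that each $\Phi_V$ genuinely lies in $\FMP_\k^\Ab$, and second that its tangent complex is naturally $V$.

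For the first check, I would observe that $\Phi_V$ is a composite of three functors of $B$: the augmentation ideal $\Aug(B) = \hofib(B \to \k)$, the functor $- \otimes_\k V$, and the connective truncation $(-)^{\leq 0}$. The augmentation-ideal functor is exact, since an augmented cdga splits naturally as $\k \oplus \Aug(B)$, and in particular it carries a Schlessinger pullback $\k \times_B A$ to $\hofib(\Aug(A) \to \Aug(B))$; tensoring with $V$ is exact because $\k$ is a field; and $(-)^{\leq 0}$, being right adjoint to the inclusion $\dgMod_\k^{\leq 0} \hookrightarrow \dgMod_\k$, preserves all limits, hence fibers and pullbacks. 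Consequently $\Phi_V$ preserves the pullbacks appearing in \ref{cond:S2}. Moreover $\Aug(\k) = 0$ forces $\Phi_V(\k) \simeq 0$, which is \ref{cond:S1}, and the values, being connective $\k$-complexes, carry their canonical abelian-group (connective-spectrum) structure. Thus $\Phi_V \in \FMP_\k^\Ab$, functorially in $V$.

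For the second check, I would compute $\TAb \Phi_V$ by evaluating on the trivial square-zero extensions $\k \oplus \k[n]$, for which $\Aug(\k \oplus \k[n]) \simeq \k[n]$ and hence $\Phi_V(\k \oplus \k[n]) \simeq (V[n])^{\leq 0}$; the equivalences $\Phi_V(\k \oplus \k[n]) \simeq \Omega\, \Phi_V(\k \oplus \k[n+1])$ supplied by \ref{cond:S2} (applied to $\k \oplus \k[n] \simeq \k \times_{\k \oplus \k[n+1]} \k$) assemble these values into the spectrum $\TAb \Phi_V$. The only delicate point is the truncation, but it is harmless in the limit: once $n$ exceeds the top cohomological degree in which $V$ is supported, $V[n]$ is already connective and $(V[n])^{\leq 0} \simeq V[n]$ on the relevant range, exactly as the truncation washed out of the colimit $\colim_n \Omega^{n}(g(V[n]))$ in the proof of \autoref{prop:abelianlie}. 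Passing to the colimit yields $\homol^{-i}(\TAb \Phi_V) \simeq \homol^{-i}(V)$ naturally in $i$ and in $V$, so $\TAb \Phi_V \simeq V$; this is also consistent with the computation of $\TAb$ on abelianized representables in \autoref{lem:abelianization}.

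The main obstacle is precisely the non-exact truncation $(-)^{\leq 0}$: it must be handled by right-adjointness for the Schlessinger conditions and by a stabilization (large-$n$) argument for the tangent complex, mirroring the colimit computation already carried out for \autoref{prop:abelianlie}. Everything else is formal once these two points are settled.
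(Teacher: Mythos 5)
Your proposal is correct, but it proves the statement by a genuinely different route than the paper. The paper's proof is a Yoneda argument: since $\Map_{\dgArt_\k}(B,-)$ corepresents evaluation at $B$, one has $\eAb(V)(B) \simeq \Map_{\FMP_\k^\Ab}\left(\FAb(\Map(B,-)), \eAb(V)\right)$, which via the equivalence $\TAb$ becomes $\Map_{\dgMod_\k}\left(\dual{\Aug(B)}, V\right)$ by \autoref{lem:abelianization} -- a computation resting on the Koszul-duality statement \autoref{lem:BsimCDB} -- and finally $\Map_{\dgMod_\k}(\dual{\Aug(B)},V) \simeq \left(\Aug(B) \otimes_\k V\right)^{\leq 0}$ using that $\Aug(B)$ is perfect (biduality). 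You instead verify by hand that the explicit formula $\Phi_V(B) = \left(\Aug(B)\otimes_\k V\right)^{\leq 0}$ defines an object of $\FMP_\k^\Ab$ (exactness of $\Aug$, flatness of $-\otimes_\k V$, and limit-preservation of the truncation give \ref{cond:S2}; $\Aug(\k)=0$ gives \ref{cond:S1}), compute its tangent complex on the square-zero extensions $\k \oplus \k[n]$ to get $\colim_n V^{\leq n} \simeq V$, and conclude by invertibility of $\TAb$; both steps are sound, including your treatment of the truncation (right-adjointness for limits, stabilization in the colimit for the tangent complex). What each approach buys: the paper's proof is a two-line consequence of lemmas already in place and keeps the equivalence manifestly identified with concrete evaluation/Yoneda maps (which is what makes later uses such as \autoref{lem:tgtSQ} immediate), whereas yours is more self-contained -- it bypasses the Chevalley--Eilenberg/Koszul duality input behind \autoref{lem:abelianization} and never needs finite-dimensionality of $\Aug(B)$ -- at the cost of the explicit Schlessinger and tangent-complex verifications. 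Note that both arguments rely on the paper's preceding assertion that $\TAb$ is computed as the tangent complex at the unique $\k$-point; your proof uses it essentially (it is what licenses the computation of $\TAb\Phi_V$ via $\k\oplus\k[n]$), so it is an input, not something your argument re-proves.
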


\begin{proof}
 The equivalence $\dgLie^\Omega_\k \simeq \FMP_\k$ is constructed by identifying $\dgArt_\k$ with a full subcategory of $\dgLie^\Omega_\k$ of so-called good (shifted) dg-Lie algebras, that generates $\dgLie_\k^\Omega$ in a certain way. This identification is given by the Chevalley-Eilenberg functor $\CEO$.
 In particular, given $V \in \dgMod_\k$, and $B \in \dgArt_\k$, we have
 \begin{multline*}
  \eAb(V)(B) \simeq \Map_{\FMP_\k^\Ab}\left(\FAb(\Map(B,-)), \eAb(V)\right) \\ \simeq \Map_{\dgMod_\k}\left( \dual{\Aug(B)}, V\right) \simeq \left(\Aug(B) \otimes_\k V\right)^{\leq 0}.
 \end{multline*}

\end{proof}

\paragraph{Diagrammatic summary}

It follows from \autoref{prop:abelianlie} that an abelian formal moduli problem is determined by its tangent complex (without any additional structure).

We denote by $\ellAb$ the composite functor $\ellAb := \TAb \circ \LAb$.
We get the commutative diagrams of right adjoints and of left adjoints
\[
\begin{tikzcd}[column sep=1.6em]
\dgLie_\k^{\Omega}
  \ar[leftarrow]{d}{\theta}
  \ar[yshift=-0.12em]{r}[swap]{\e}
  \ar[yshift=0.33em, leftarrow]{r}{\T}
  \ar[phantom]{r}{\sim}
& \FMP_\k
  \ar[leftarrow]{d}{\GAb}
  \ar[hook]{r}[swap]{\incl} 
& \PFMP_\k
\\
\dgMod_\k
  \ar[yshift=-0.12em]{r}[swap]{\eAb}
  \ar[yshift=0.33em, leftarrow]{r}{\TAb}
  \ar[phantom]{r}{\sim}
& \FMP_\k^\mathrm{Ab}
  \ar[hook]{r}[swap]{\iAb}
& \PFMP^\mathrm{Ab}_\k
  \ar{u}[swap]{\infloop}
\end{tikzcd}
\hspace{0.38em}
\begin{tikzcd}[column sep=1.6em]
\dgLie_\k^{\Omega}
  \ar{d}[swap]{\bCE^\Omega_\bullet}
  \ar[yshift=-0.12em]{r}[swap]{\e}
  \ar[yshift=0.33em, leftarrow]{r}{\T}
  \ar[phantom]{r}{\sim}
& \FMP_\k
  \ar{d}[swap]{\FAb}
& \PFMP_\k
  \ar{l}{\form}
\\
\dgMod_\k
  \ar[yshift=-0.12em]{r}[swap]{\eAb}
  \ar[yshift=0.33em, leftarrow]{r}{\TAb}
  \ar[phantom]{r}{\sim}
& \FMP_\k^\Ab
& \PFMP^\Ab_\k.
  \ar{l}{\LAb}
  \ar[leftarrow]{u}{\infsusp}
  \ar[bend left=20]{ll}[swap]{\ellAb}
\end{tikzcd}
\]

\subsection{\texorpdfstring{$\Q$}{ℚ}-linear moduli problems}\label{subsec:FMPQ}

\paragraph{Definitions}
\begin{df}
 A $\Q$-linear (pre-)FMP is a (pre-)FMP with values in the $\infty$-category $\Cc = \dgMod_\Q^{\leq 0}$.
 We shorten the notations by setting
 \[
 \FMP_\k^\Q := \FMP_\k^\Cc, \hspace{5mm} \PFMP_\k^\Q := \PFMP_\k^\Cc, \hspace{5mm}   \iQ := \iC \hspace{5mm}\text{and}\hspace{5mm} \LQ := \LC.
 \]
 We denote by $\jQ$ the forgetful functor $\PFMP_\k^\Q \to \PFMP^\Ab_\k$ and by $\eQ$ the functor $\dgMod_\k \to \FMP_\k^\Q$ given by the formula
 \[
  \eQ(V) \colon B \mapsto \left(\Aug(B) \otimes_\k V\right)^{\leq 0} \in \dgMod^{\leq 0}_\Q.
 \]
 Recall that $\eAb$ denotes the inverse of the equivalence $\T \colon \FMP^\Ab_\k \to \dgMod_\k$.
\end{df}

\begin{prop}\label{prop:fmpQ}
The following assertions hold.
\begin{assertions}
 \item\label{ass:jFMP} The functor $\jQ \colon \PFMP^\Q_\k \to \PFMP_\k^\Ab$ preserves formal moduli problems and is fully faithful.
 \item\label{ass:iAbfactors} The functor $\eAb$ factors as
 \[
  \eAb \simeq \jQ \circ \eQ \colon \dgMod_\k \to^{\eQ} \FMP_\k^\Q \to^\jQ \FMP_\k^\Ab.
 \]
 \item\label{ass:iAbiQ}
 The functors $\eQ \colon \dgMod_\k \to \FMP^\Q_\k$ and $\operatorname j_\Q \colon \FMP^\Q_\k \to \FMP^\Ab_\k$ are equivalences.
\end{assertions}
As a consequence, we have the following factorization of the adjunction $\LAb \dashv \iAb$ :
\[\label{eq:eAb}
\begin{tikzcd}
\dgMod_\k \simeq \FMP^\Ab_\k \simeq \FMP_\k^\Q
  \arrow[rr, "\iAb", rounded corners, to path={ 
    -- (\tikztostart.south)
    |- ([yshift=-2ex, xshift=-1ex]\tikztotarget.south) [swap,near end]\tikztonodes
    -- ([yshift=0.9ex, xshift=-1ex]\tikztotarget.south)}]
  \ar[shift right, hook']{r}[swap]{\iQ} 
  \ar[shift left, leftarrow]{r}{\LQ}
& \PFMP_\k^\Q 
  \ar[shift right, hook']{r}[swap]{\operatorname j_\Q}
  \ar[shift left, leftarrow]{r}{-\wedge \Q}
& \PFMP_\k^\Ab. 
  \arrow[ll, "\LAb", rounded corners, to path={ 
    ([xshift=-1ex, yshift=-0.7ex]\tikztostart.north)
    |- ([yshift=2ex]\tikztotarget.north) [swap,near end]\tikztonodes
    -- ([yshift=-0.7ex]\tikztotarget.north)}]
\end{tikzcd}
\]
\end{prop}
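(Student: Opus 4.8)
The plan is to reduce all three assertions to a single property of the value-level forgetful functor $U \colon \dgMod_\Q^{\leq 0} \to \Sp_{\geq 0}$, since $\jQ$ is simply postcomposition with $U$ and $-\wedge\Q$ is postcomposition with its left adjoint (the free $H\Q$-module functor). This $U$ is a right adjoint, so it preserves all limits and, in particular, final objects. The key point I would establish first is that $U$ is moreover \emph{fully faithful}: its essential image is the connective rational spectra, and because rationalization is a smashing localization, the counit $U(M)\wedge\Q \to M$ of the adjunction $(-\wedge\Q)\dashv U$ is an equivalence for every $H\Q$-module $M$ (it admits the unit $M \to U(M)\wedge\Q$ as a section, and that unit is an equivalence since $M$ is already rational). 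Full faithfulness of $U$ is the technical heart; everything downstream is formal.

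Granting this, I would prove \ref{ass:jFMP} directly. Since $U$ preserves finite limits and final objects, postcomposition with $U$ sends a functor satisfying the Schlessinger conditions to one satisfying them, so $\jQ$ preserves formal moduli problems (this is also an instance of \autoref{lem:functoriality}, applied to the adjunction $(-\wedge\Q)\dashv U$). Full faithfulness of $\jQ$ then follows from full faithfulness of $U$, as postcomposition with a fully faithful functor is fully faithful on functor categories. For \ref{ass:iAbfactors} I would merely compare formulas: by \autoref{prop:calceAb} one has $\eAb(V)(B) \simeq (\Aug(B)\otimes_\k V)^{\leq 0}$ as a connective spectrum, whereas $\eQ(V)(B)$ is the very same connective complex regarded in $\dgMod_\Q^{\leq 0}$ (using $\k\supseteq\Q$). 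Applying $U$ to the latter returns the former, so $\jQ\circ\eQ \simeq \eAb$.

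Finally, for \ref{ass:iAbiQ} I would bootstrap from the equivalence $\eAb$. By \autoref{prop:abelianlie} the functor $\eAb$ is an equivalence, and by \ref{ass:iAbfactors} it factors as $\jQ\circ\eQ$. The restriction of $\jQ$ to formal moduli problems is fully faithful by \ref{ass:jFMP}, and it is essentially surjective because the composite $\jQ\circ\eQ=\eAb$ already is; hence $\jQ\colon\FMP_\k^\Q\to\FMP_\k^\Ab$ is an equivalence, and then $\eQ\simeq\jQ^{-1}\circ\eAb$ is an equivalence as well. The displayed factorization of $\LAb\dashv\iAb$ is then pure bookkeeping: the square comparing the inclusions $\iQ$, $\iAb$ with the two forgetful functors $\jQ$ commutes strictly, since both routes send an FMP $F$ to the pre-FMP $U\circ F$; thus $\jQ\circ\iQ\simeq\iAb$ under the equivalence of \ref{ass:iAbiQ}, and passing to left adjoints gives $\LAb\simeq\LQ\circ(-\wedge\Q)$. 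The only step demanding genuine input is the full faithfulness of $U$, i.e.\ that connective rational dg-modules embed fully faithfully into connective spectra; the remainder is formal manipulation of adjunctions and limit-preservation.
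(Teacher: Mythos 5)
Your proposal is correct and takes essentially the same route as the paper: assertion (a) via limit-preservation of the forgetful functor $\dgMod_\Q^{\leq 0} \to \Sp_{\geq 0}$ together with rational idempotency ($\Q \wedge \Q \simeq \Q$, i.e.\ the smashing-localization argument you spell out) for full faithfulness, assertion (b) by comparing formulas using \autoref{prop:calceAb}, and assertion (c) by the formal bootstrap from (a), (b) and \autoref{prop:abelianlie}. The paper's proof is just a compressed version of this, so your added details (conservativity, the section/unit argument, passing to left adjoints for the final factorization) are exactly the intended ones.
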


\begin{proof}
The functor $\jQ$ is given by post-composing with the limit preserving functor $\dgMod_\Q^{\leq 0} \to \Sp_{\geq 0}$. In particular, it preserves the \ref{cond:schlessinger}. Recall that $\Q$ is idempotent in spectra: $\Q \wedge \Q \simeq \Q$. It follows that $\jQ$ is fully faithful. This proves \ref{ass:jFMP}.

\autoref{prop:calceAb} implies \ref{ass:iAbfactors}.
All is left is \ref{ass:iAbiQ}, which follows from \ref{ass:jFMP} and \ref{ass:iAbfactors}.
\end{proof}

\begin{df}
We denote by $\TQ \colon \FMP_\k^\Q \to \dgMod_\k$ the inverse of $\eQ$.
 We denote by $\ellQ$ the composite $\TQ \circ \LQ$.
\end{df}

\begin{rmq}\label{rmq:lbar}
 Since $\k$ is both initial and final in $\dgArt_\k$, any $F \in \PFMP_\k^\Q$ (resp. in $\PFMP_\k^\Ab$) splits as $F \simeq \widebar F \oplus F(\k)$, where $\widebar F$ is pointed (i.e. satisfies the \ref{cond:S1}) and $F(\k)$ is the constant functor.
 Since the inclusion $\iQ$ (resp. $\iAb$) factors through the category of pointed functors, its left adjoint $\LQ$ (resp. $\LAb$) can be decomposed into two functors. The first associates $\widebar F$ to $F$, while the second forces \ref{cond:S2}.
 In particular, we have
 \begin{align*}
  \LQ(F) \simeq \LQ(\widebar F) \text{ }&\text{and } \ellQ(F) \simeq \ellQ(\widebar F) \\ \text{\Big(resp. } \LAb(F) \simeq \LAb(\widebar F) \text{ }&\text{and } \ellAb(F) \simeq \ellAb(\widebar F) \text{\Big)}.
 \end{align*}
\end{rmq}

\paragraph{Generators}
In this paragraph, we will identify families of generators of the category $\PFMP_\k^\Q$.
\begin{df}
For $B \in \dgArt_\k$, we denote by $\SQ(B) \in \PFMP_\k^\Q$ (resp. $\SQb(B)$) the functor
 \[
  \SQ(B) := \mathrm{C}_\bullet \left( \Map_{\dgArt_\k}(B,-), \Q \right) \hspace{1em} \left(\text{resp. }
  \SQb(B) := \widebar{\mathrm{C}}_\bullet \left( \Map_{\dgArt_\k}(B,-), \Q \right) \right),
 \]
 where $\mathrm{C}_\bullet(-,\Q)$ (resp. $\widebar{\mathrm{C}}_\bullet(-,\Q)$) computes the (reduced) rational homology of a given simplicial set.
\end{df}

\begin{lem}\label{lem:generateurs}
 The category $\PFMP_\k^\Q$ is generated under colimits by functors of the form $\SQ(B)$.
 The full subcategory of $\PFMP_\k^\Q$ spanned by pointed functors (i.e. satisfying the \ref{cond:S1}) is generated under colimits by functors of the form $\SQb(B)$.
\end{lem}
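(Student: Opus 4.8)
\autoref{lem:generateurs} asserts that $\PFMP_\k^\Q$ is generated under colimits by the functors $\SQ(B)$, and its full subcategory of pointed functors is generated under colimits by the $\SQb(B)$.

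Here is my plan.

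\textbf{Approach.} The plan is to exhibit the $\SQ(B)$ as the corepresentable-type objects obtained by applying a left adjoint to the Yoneda generators of the presheaf category, and then to invoke the standard fact that left adjoints send a generating set to a generating set. Recall that $\PFMP_\k^\Q = \Fct(\dgArt_\k, \dgMod_\Q^{\leq 0})$. The source category $\dgArt_\k$ is small, so by \cite[5.1.5.8]{lurie:htt} the $\infty$-category of space-valued presheaves $\PFMP_\k = \Fct(\dgArt_\k, \sSets)$ is generated under colimits by the corepresentable functors $\Map_{\dgArt_\k}(B,-)$ for $B \in \dgArt_\k$.

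\textbf{Key steps.} First I would identify the functor realizing the passage from space-valued to $\dgMod_\Q^{\leq 0}$-valued pre-FMPs. Post-composition with the ``rational chains'' functor $\mathrm{C}_\bullet(-,\Q)\colon \sSets \to \dgMod_\Q^{\leq 0}$ induces a functor $\PFMP_\k \to \PFMP_\k^\Q$; this is a left adjoint, since $\mathrm{C}_\bullet(-,\Q)$ is itself a left adjoint (it is the composite of the free $\Q$-module functor on spaces with the Dold--Kan normalization, both left adjoints), and post-composition with a left adjoint between the targets is a left adjoint on functor categories (pointwise, by \cite[5.1.5.8]{lurie:htt} / adjoint functor arguments). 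By definition $\SQ(B) = \mathrm{C}_\bullet(\Map_{\dgArt_\k}(B,-),\Q)$ is exactly the image of the corepresentable $\Map_{\dgArt_\k}(B,-)$ under this functor. Second, since a left adjoint preserves all colimits, the essential image of a colimit-generating family is again colimit-generating: the subcategory of $\PFMP_\k^\Q$ generated under colimits by the $\SQ(B)$ contains the image of all of $\PFMP_\k$, and one checks this image generates because every $\dgMod_\Q^{\leq 0}$-valued functor is a colimit of such chain-functors (indeed $\dgMod_\Q^{\leq 0}$ itself is generated under colimits by $\Q$ sitting in each degree, equivalently by chains on spheres, so objectwise every target value is built from the $\mathrm{C}_\bullet(-,\Q)$ of spaces). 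Concretely, the most efficient argument is: the left adjoint $\mathrm{C}_\bullet(-,\Q)_*$ together with the fact that $\dgMod_\Q^{\leq 0}$ is generated under colimits by $\Q$ shows that any $F \in \PFMP_\k^\Q$ is a colimit of functors of the form $\Map_{\dgArt_\k}(B,-) \otimes \Q[n]$, each of which is a (de)suspension-type retract built from the $\SQ(B)$.

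\textbf{Pointed case.} For the second assertion I would run the identical argument one level down, replacing $\sSets$ with pointed spaces $\sSets_*$ and $\mathrm{C}_\bullet(-,\Q)$ with reduced chains $\widebar{\mathrm{C}}_\bullet(-,\Q)$. The pointed corepresentables $\Map_{\dgArt_\k}(B,-)_+$ generate pointed presheaves under colimits (again \cite[5.1.5.8]{lurie:htt}, using that $\dgArt_\k$ is pointed via the initial-and-final object $\k$), and $\SQb(B) = \widebar{\mathrm{C}}_\bullet(\Map_{\dgArt_\k}(B,-),\Q)$ is their image under the reduced-chains left adjoint; the pointed functors in $\PFMP_\k^\Q$ are precisely those satisfying \ref{cond:S1}, matching the splitting recorded in \autoref{rmq:lbar}.

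\textbf{Main obstacle.} The genuine content is not the abstract ``left adjoints preserve generation'' step, which is formal, but rather the verification that the relevant functor $\mathrm{C}_\bullet(-,\Q)_*\colon \PFMP_\k \to \PFMP_\k^\Q$ really is a left adjoint at the $\infty$-categorical level and that its essential image generates the target under colimits. The first point requires knowing that $\mathrm{C}_\bullet(-,\Q)\colon \sSets \to \dgMod_\Q^{\leq 0}$ is an $\infty$-categorical left adjoint (the free-$\Q$-module/forgetful adjunction composed with Dold--Kan) and that post-composition along a left adjoint preserves colimits pointwise. The second point is where care is needed: one must argue that $\dgMod_\Q^{\leq 0}$ is compactly generated by (shifts of) $\Q$ and that these shifts are themselves retracts of, or colimits of, $\mathrm{C}_\bullet$ of finite pointed spaces (e.g. $\widebar{\mathrm{C}}_\bullet(S^n,\Q) \simeq \Q[n]$), so that the objectwise generation of $\dgMod_\Q^{\leq 0}$ propagates to generation of the whole functor category by the $\SQ(B)$ and $\SQb(B)$. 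This bookkeeping — transporting objectwise generation of the target to generation of the diagram category — is the step I would write out most carefully.
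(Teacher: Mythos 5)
Your treatment of the first statement is correct and is essentially the paper's own proof: the paper views $\PFMP_\k^\Q$ as the category of $\Q$-linear objects in $\PFMP_\k$, notes that the representables $\Map_{\dgArt_\k}(B,-)$ generate the presheaf category under colimits, and concludes that their free $\Q$-linearizations, i.e.\ the $\SQ(B)$, generate $\PFMP_\k^\Q$; your pointwise-chains left adjoint is exactly this free $\Q$-linearization, and the bookkeeping step you isolate (propagating generation of $\dgMod_\Q^{\leq 0}$ by $\Q$ to the functor category, e.g.\ by writing every $F$ as a colimit of the left Kan extensions $\Map_{\dgArt_\k}(B,-) \otimes F(B)$, or via the bar resolution for the monadic free/forgetful adjunction) is precisely what the paper compresses into ``It follows that''.

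Your argument for the second statement, however, does not work as written. The image of $\Map_{\dgArt_\k}(B,-)_+$ under pointwise reduced chains is
$\widebar{\mathrm{C}}_\bullet\left(\Map_{\dgArt_\k}(B,-)_+,\Q\right) \simeq \mathrm{C}_\bullet\left(\Map_{\dgArt_\k}(B,-),\Q\right) = \SQ(B)$,
not $\SQb(B)$: adding a disjoint basepoint and then reducing returns unreduced chains. Relatedly, the reduced-chains functor on pointed presheaves does not take values in the subcategory you need to generate, namely the functors with $F(\k) \simeq 0$ (indeed $\SQ(B)(\k) \simeq \Q$), so ``running the identical argument one level down'' only reproves the first statement; it cannot produce the second. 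The correct deduction is the one latent in \autoref{rmq:lbar}, which you cite: $\SQb(B)$ is the reduced chains of $\Map_{\dgArt_\k}(B,-)$ based at the canonical point $B \to \k \to (-)$, equivalently $\SQb(B) \simeq \widebar{\SQ(B)} = \operatorname{cofib}\left(\underline{\SQ(B)(\k)} \to \SQ(B)\right)$. The pointed functors form a full subcategory closed under colimits, and the reduction $F \mapsto \widebar F$ is left adjoint to its inclusion (for pointed $G$ one has $\Map(\underline{F(\k)},G) \simeq \Map\left(F(\k),\lim G\right) \simeq \ast$ since $\k$ is initial, so $\Map(F,G) \simeq \Map(\widebar F, G)$), hence preserves colimits. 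Applying it to a presentation of a pointed $G$ as a colimit of $\SQ(B_i)$'s, furnished by the first statement, gives $G \simeq \widebar G \simeq \colim_i \SQb(B_i)$, which is what the paper means by ``The second statement follows''.
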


\begin{proof}
 Note that $\PFMP_\k^\Q$ is equivalent to the $\infty$-category of $\Q$-linear objects in $\PFMP_\k$. As a category of presheaves, $\PFMP_\k$ is generated under colimits by the representable  functors $\Map_{\dgArt_\k}(B,-)$ (for $B \in \dgArt_\k$).
 It follows that $\PFMP_\k^\Q$ is generated under colimits by the free $\Q$-linear presheaves generated by those $\Map_{\dgArt_\k}(B,-)$, i.e. by the $\SQ(B)$'s.
 The second statement follows.
\end{proof}

We now compute explicitly the formal moduli problem associated to such a generator.
\begin{lem}\label{lem:tgtSQ}
Let $B \in \dgArt_\k$.
There are functorial equivalences $\ellQ(\SQ(B)) \simeq \ellQ(\SQb(B)) \simeq \dual{\Aug(B)}$, where $\dual{(-)}$ computes the $\k$-linear dual and $\Aug$ the augmentation.
\end{lem}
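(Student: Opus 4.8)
The plan is to pin down $\ellQ(\SQ(B))$ by computing the mapping spaces out of it and then invoking the Yoneda lemma in $\dgMod_\k$. Recall that $\ellQ = \TQ \circ \LQ$, where $\TQ$ is the inverse of the equivalence $\eQ \colon \dgMod_\k \to^\sim \FMP_\k^\Q$ and $\LQ$ is left adjoint to $\iQ$. Hence, for any $W \in \dgMod_\k$, I would write down the natural chain of equivalences
\[
 \Map_{\dgMod_\k}(\ellQ(\SQ(B)), W) \simeq \Map_{\FMP_\k^\Q}(\LQ(\SQ(B)), \eQ(W)) \simeq \Map_{\PFMP_\k^\Q}(\SQ(B), \iQ\eQ(W)),
\]
the first equivalence using that $\TQ$ is an equivalence with inverse $\eQ$, the second using $\LQ \dashv \iQ$. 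It then suffices to identify the right-hand side, naturally in $W$ and $B$, with $\Map_{\dgMod_\k}(\dual{\Aug(B)}, W)$.

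Next I would recognize $\SQ(B) = \mathrm{C}_\bullet(\Map_{\dgArt_\k}(B,-), \Q)$ as the image of the corepresentable presheaf $h_B := \Map_{\dgArt_\k}(B,-) \in \PFMP_\k$ under the free $\Q$-linearization functor, i.e. post-composition with the rational chains functor $\mathrm{C}_\bullet(-,\Q) \colon \sSets \to \dgMod_\Q^{\leq 0}$. Since $\mathrm{C}_\bullet(-,\Q)$ is left adjoint to the underlying-space functor $U \colon \dgMod_\Q^{\leq 0} \to \sSets$ (Dold--Kan), post-composition induces an adjunction between the functor $\infty$-categories $\PFMP_\k \rightleftarrows \PFMP_\k^\Q$, giving
\[
 \Map_{\PFMP_\k^\Q}(\SQ(B), \iQ\eQ(W)) \simeq \Map_{\PFMP_\k}(h_B, U \circ \iQ\eQ(W)).
\]
By Yoneda in $\PFMP_\k = \Fct(\dgArt_\k, \sSets)$, the latter is the value at $B$ of the underlying-space functor applied to $\eQ(W)$, that is, the underlying space of $\eQ(W)(B) = (\Aug(B) \otimes_\k W)^{\leq 0}$.

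It remains to match this with $\Map_{\dgMod_\k}(\dual{\Aug(B)}, W)$. Because $B$ is Artinian, $\Aug(B)$ has finite-dimensional total cohomology and is a perfect complex, hence reflexive: $\dual{\dual{\Aug(B)}} \simeq \Aug(B)$. Therefore the derived mapping complex from $\dual{\Aug(B)}$ to $W$ is $\dual{\dual{\Aug(B)}} \otimes_\k W \simeq \Aug(B) \otimes_\k W$, whose connective truncation $(\Aug(B)\otimes_\k W)^{\leq 0}$ has underlying space exactly $\Map_{\dgMod_\k}(\dual{\Aug(B)}, W)$. Combining the displays yields a natural equivalence $\Map_{\dgMod_\k}(\ellQ(\SQ(B)), W) \simeq \Map_{\dgMod_\k}(\dual{\Aug(B)}, W)$ in $W$, so $\ellQ(\SQ(B)) \simeq \dual{\Aug(B)}$ by the Yoneda lemma, functorially in $B$ since every step above is.

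Finally, for the reduced variant, since $\k$ is final in $\dgArt_\k$ the space $\Map_{\dgArt_\k}(B, \k)$ is contractible, so $\SQ(B)(\k) \simeq \Q$ is the constant functor and $\SQb(B)$ is precisely the pointed summand $\widebar{\SQ(B)}$ of $\SQ(B)$. By \autoref{rmq:lbar} we then get $\ellQ(\SQ(B)) \simeq \ellQ(\widebar{\SQ(B)}) = \ellQ(\SQb(B))$, completing the argument. The main obstacle I anticipate is justifying carefully that post-composition with $\mathrm{C}_\bullet(-,\Q)$ is left adjoint to post-composition with $U$ at the level of these functor $\infty$-categories (so that the Yoneda reduction is legitimate), together with the reflexivity of $\Aug(B)$ used in the last identification; both are precisely where one must use that $B$ is genuinely Artinian and that $\k$ is a field.
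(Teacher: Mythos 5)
Your proof is correct, but it takes a genuinely different route from the paper's. The paper's own proof is a two-line reduction: it quotes \autoref{rmq:lbar} to identify $\ellQ(\SQb(B))$ with $\ellQ(\SQ(B))$ (exactly as you do), and then invokes \autoref{lem:abelianization} together with the factorization of \autoref{prop:fmpQ}; the actual computation of $\dual{\Aug(B)}$ thus happens on the Lie-algebra side, via Koszul duality ($B \simeq \CE^\bullet_\k(\Dd_\k(B))$, \autoref{lem:BsimCDB}) and the identification of the abelianization of formal moduli problems with the reduced Chevalley--Eilenberg functor $\bCEO$. You never touch the Lie side: starting from the paper's definition $\eQ(W) \colon B' \mapsto (\Aug(B') \otimes_\k W)^{\leq 0}$ and the fact that $\eQ$ is an equivalence (\autoref{prop:fmpQ}, which makes $\TQ = \eQ^{-1}$ legitimate), you compute $\Map_{\dgMod_\k}(\ellQ(\SQ(B)), W)$ by the adjunction chain ($\TQ/\eQ$, then $\LQ \dashv \iQ$, then free $\Q$-linearization against the underlying-space functor $U$), apply Yoneda for the corepresentable $\Map_{\dgArt_\k}(B,-)$, and finish by using reflexivity of the perfect complex $\Aug(B)$ over the field $\k$ to recognize $U\left((\Aug(B) \otimes_\k W)^{\leq 0}\right) \simeq \Map_{\dgMod_\k}(\dual{\Aug(B)}, W)$. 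The two points you flag as delicate are indeed fine: post-composition with an adjoint pair does induce an adjunction of functor $\infty$-categories (the paper itself relies on this kind of statement, cf. \autoref{lem:functoriality}), and an Artinian $B$ has bounded, finite-dimensional cohomology, so $\Aug(B)$ is perfect and reflexive. In effect your argument re-runs, in reverse, the mapping-space computation the paper performs once in the proof of \autoref{prop:calceAb}; what it buys is a self-contained, purely linear-algebraic derivation given \autoref{prop:fmpQ}, at the cost of redoing work the paper can simply cite.
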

\begin{proof}
From \autoref{rmq:lbar}, we have $\ellQ(\SQb(B)) \simeq \ellQ(\SQ(B))$.
The result then follows from \autoref{lem:abelianization} in conjunction with the factorization from \autoref{prop:fmpQ}.
\end{proof}

\paragraph{Monoidality}

We will now consider monoidal structures on the adjunction
\[
 \ellQ \colon \PFMP_\k^\Q \rightleftarrows \FMP_\k^\Q \simeq \dgMod_\k \noloc \eQ.
\]
We first observe that both sides admit a natural tensor structure: $\otimes_\k$ on the RHS, and the point-wise application of $\otimes_\Q$ on the LHS.

\begin{lem}
 The functor $\eQ$ is non-unitally lax symmetric monoidal.
\end{lem}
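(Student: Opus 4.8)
The plan is to realise $\eQ$ as a composite of functors each of which is (non-unitally) lax symmetric monoidal, and then invoke the fact that lax symmetric monoidal functors are stable under composition (a single non-unital factor forcing the whole composite to be only non-unital). Throughout, a functor category $\Fct(\dgArt_\k,\Cc)$ is given the point-wise tensor product induced by a symmetric monoidal structure on $\Cc$. Regarding $B \mapsto \Aug(B)$ as an object $\Aug \in \Fct(\dgArt_\k,\dgMod_\k)$, I would factor $\eQ$ (viewed through $\iQ$ into $\PFMP_\k^\Q$) as
\begin{align*}
\dgMod_\k &\xrightarrow{\ c\ } \Fct(\dgArt_\k,\dgMod_\k) \xrightarrow{\ \Aug\otimes_\k(-)\ } \Fct(\dgArt_\k,\dgMod_\k) \\
&\xrightarrow{\ \Res\ } \Fct(\dgArt_\k,\dgMod_\Q) \xrightarrow{\ (-)^{\leq 0}\ } \PFMP_\k^\Q,
\end{align*}
where $c$ is the constant-functor embedding, the second arrow is point-wise tensoring with $\Aug$, the third is point-wise restriction of scalars along $\Q \to \k$, and the last is point-wise connective truncation. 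Evaluating the composite on $V$ at $B$ returns $(\Aug(B)\otimes_\k V)^{\leq 0}$, so the composite is $\eQ$.

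Then I would check each factor carries the required structure. The embedding $c$ is strong symmetric monoidal, since point-wise $c(V)\otimes c(W)\simeq c(V\otimes_\k W)$. The factor $\Aug\otimes_\k(-)$ is where non-unitality enters: each augmentation ideal $\Aug(B)$ is a non-unital commutative $\k$-algebra and these products are natural in $B$, so $\Aug$ is a non-unital commutative algebra object in $\Fct(\dgArt_\k,\dgMod_\k)$ for the point-wise tensor. Tensoring with a commutative algebra $A$ in any symmetric monoidal $\infty$-category $\Mc$ is lax symmetric monoidal, as one sees by writing $A\otimes(-)$ as the composite $\Mc\simeq\mathbf 1\times\Mc \to \Mc\times\Mc \xrightarrow{\ \otimes\ }\Mc$, in which the first functor is the product of the identity with the lax symmetric monoidal functor $\mathbf 1\to\Mc$ classifying $A$, and $\otimes$ is symmetric monoidal; this structure acquires a unit exactly when $A$ does, so for $\Aug$ it is only non-unital. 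Finally $\Res$ (the right adjoint of the strong symmetric monoidal base change $-\otimes_\Q\k$) and the connective truncation (the right adjoint of the strong symmetric monoidal inclusion $\dgMod_\Q^{\leq 0}\hookrightarrow\dgMod_\Q$) are lax symmetric monoidal as right adjoints of strong symmetric monoidal functors, and post-composition with a lax symmetric monoidal functor is lax symmetric monoidal for the point-wise tensor.

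Composing, $\eQ$ inherits a non-unitally lax symmetric monoidal structure whose structure map $\eQ(V)\otimes_\Q\eQ(W)\to\eQ(V\otimes_\k W)$ is, at each $B$, the composite
\begin{align*}
(\Aug(B)\otimes_\k V)^{\leq 0}\otimes_\Q(\Aug(B)\otimes_\k W)^{\leq 0} &\to \big(\Aug(B)\otimes_\k\Aug(B)\otimes_\k V\otimes_\k W\big)^{\leq 0} \\ &\xrightarrow{\ \mu\ } \big(\Aug(B)\otimes_\k V\otimes_\k W\big)^{\leq 0},
\end{align*}
where the first arrow assembles the lax maps of truncation and of restriction of scalars (and uses the symmetry to reorder the factors) and $\mu$ is the multiplication of $\Aug(B)$. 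I expect the only real work to lie in the $\infty$-categorical bookkeeping — making precise, as a map of $\infty$-operads, the assertion that tensoring with a non-unital commutative algebra object is non-unitally lax symmetric monoidal, and that the three point-wise operations assemble coherently. These are formal (doctrinal adjunction for lax structures on right adjoints, together with point-wise functoriality), so no genuine computation is required; the entire content is recognising $\Aug$ as a non-unital commutative algebra object and observing that the absence of a unit is precisely what forces the qualifier \emph{non-unitally}.
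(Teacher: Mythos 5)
Your proof is correct and follows essentially the same route as the paper: both arguments hinge on recognising the augmentation-ideal functor as a non-unital commutative algebra object for the point-wise tensor product, tensoring the (symmetric monoidal) constant-functor embedding against it, and absorbing the connective truncation as a lax (right-adjoint) factor. The only differences are presentational — the paper gets the lax structure on the constant-functor embedding by exhibiting it as right adjoint to $F \mapsto F(\k) \otimes_\Q \k$, whereas you observe it is strong monoidal directly and split off restriction of scalars along $\Q \to \k$ as a separate lax factor.
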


\begin{proof}
 We consider the constant moduli problem functor $\dgMod_\k \to \PFMP_\k^\Q$ mapping $V$ to the constant functor $\underline V$. It is right adjoint to the symmetric monoidal functor $F \mapsto F(\k) \otimes_\Q \k$ and therefore inherits a lax monoidal structure.
 
 Let $\Ib$ be the functor $\Ib \colon \dgArt_\k \to \dgMod_\Q$ mapping an Artinian $B$ to its augmentation ideal $\Aug(B)$. It is by construction an ideal in the commutative algebra object $\I \colon B \mapsto B$ and therefore inherits a non-unital commutative algebra structure.
 
 In particular, the functor $\eQ \colon V \mapsto \tau^{\leq 0}\left(\underline V \otimes_{\underline \k} \Ib\right)$ is non-unitally lax symmetric monoidal.
\end{proof}

As a direct consequence of this lemma, we get that the functor $\ellQ$ is non-unitally colax symmetric monoidal.

\begin{prop}\label{prop:monoidal}
The functor $\ellQ$ is non-unitally symmetric monoidal once restricted to the full subcategory of pointed functors.
\end{prop}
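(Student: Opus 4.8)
The plan is to show that the colax structure maps already produced above are equivalences once we restrict to pointed functors. By the preceding lemma together with doctrinal adjunction, the left adjoint $\ellQ$ of the non-unitally lax symmetric monoidal functor $\eQ$ inherits a non-unital colax symmetric monoidal structure, i.e. natural comparison maps
\[
 \nu_{F,G}\colon \ellQ(F \otimes_\Q G) \to \ellQ(F) \otimes_\k \ellQ(G).
\]
It remains to prove that $\nu_{F,G}$ is an equivalence whenever $F$ and $G$ are pointed. First I would reduce to generators: as a function of $(F,G)$ both the source and the target preserve colimits separately in each variable — indeed $\ellQ$ is a left adjoint, the tensor $\otimes_\Q$ is computed pointwise, and $\otimes_\k$ preserves colimits in each variable since $\k$ is a field. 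A natural transformation between such separately-cocontinuous bifunctors is an equivalence as soon as it is one on a generating family, so by \autoref{lem:generateurs} (pointed functors are generated under colimits by the $\SQb(B)$) it suffices to treat $F = \SQb(B)$ and $G = \SQb(B')$ with $B,B' \in \dgArt_\k$.

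For the generator computation, note that the product of representables is again representable: the coproduct in augmented cdga's is the tensor product, and $B \otimes_\k B'$ is again Artinian (Künneth gives finite-dimensional cohomology with local $\homol^0$), so
\[
 \Map_{\dgArt_\k}(B,-) \times \Map_{\dgArt_\k}(B',-) \simeq \Map_{\dgArt_\k}(B \otimes_\k B',-),
\]
compatibly with the canonical basepoints. Applying rational chains and the Eilenberg–Zilber equivalence over the field $\Q$ yields $\SQ(B) \otimes_\Q \SQ(B') \simeq \SQ(B \otimes_\k B')$. Splitting off the unit summand $\SQ(C) \simeq \SQ(\k) \oplus \SQb(C)$ on each factor then produces the natural decomposition
\[
 \SQb(B \otimes_\k B') \simeq \SQb(B) \oplus \SQb(B') \oplus \big(\SQb(B) \otimes_\Q \SQb(B')\big),
\]
exhibiting $\SQb(B) \otimes_\Q \SQb(B')$ as the ``cross term''.

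Next I would apply $\ellQ$, which preserves this finite direct sum, and invoke \autoref{lem:tgtSQ} to get $\ellQ\SQb(C) \simeq \dual{\Aug(C)}$. On the other side, over a field the augmentation ideal splits as $\Aug(B \otimes_\k B') \simeq \Aug(B) \oplus \Aug(B') \oplus (\Aug(B) \otimes_\k \Aug(B'))$; dualizing and using that $\Aug(B),\Aug(B')$ are perfect (as $B,B'$ are Artinian), so that $\dual{(\Aug(B) \otimes_\k \Aug(B'))} \simeq \dual{\Aug(B)} \otimes_\k \dual{\Aug(B')}$, we obtain the matching three-term decomposition of $\dual{\Aug(B \otimes_\k B')}$. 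Cancelling the first two summands identifies
\[
 \ellQ\big(\SQb(B) \otimes_\Q \SQb(B')\big) \simeq \dual{\Aug(B)} \otimes_\k \dual{\Aug(B')} \simeq \ellQ\SQb(B) \otimes_\k \ellQ\SQb(B'),
\]
which is exactly the target of $\nu_{\SQb(B),\SQb(B')}$.

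The main obstacle is this last point: one must check that the abstract matching of summands is the comparison map $\nu$ itself, and not merely some equivalence. I would resolve this by tracing $\nu$ through the adjunction $\ellQ \dashv \eQ$: it is the mate of the lax structure of $\eQ$, which by construction is induced by the non-unital multiplication on the augmentation-ideal functor $\Ib \colon B \mapsto \Aug(B)$. Under Yoneda — $\SQ(B)$ corepresenting evaluation at $B$ — and under Eilenberg–Zilber, the product (diagonal) on the representable side is governed by this very same multiplication, so the decomposition above is compatible with the two (co)lax structures and $\nu_{\SQb(B),\SQb(B')}$ is precisely the displayed equivalence. Granting this compatibility, $\nu_{F,G}$ is an equivalence for all pointed $F,G$, which proves the proposition.
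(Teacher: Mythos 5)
Your proposal follows the paper's proof almost verbatim up to the last step: the colax structure on $\ellQ$ obtained by adjunction from the lax structure on $\eQ$, the reduction to the generators $\SQb(B)$ using cocontinuity in each variable and \autoref{lem:generateurs}, the reduced K\"unneth decomposition of $\SQb(B_1 \otimes_\k B_2)$, and the matching of the two three-term decompositions via \autoref{lem:tgtSQ} and perfectness of the $B_i$ are all exactly the paper's argument.

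The divergence --- and the one genuine weakness --- is at the point you yourself single out as ``the main obstacle''. You attempt the strong statement that the comparison map $\nu_{\SQb(B_1),\SQb(B_2)}$ \emph{coincides} with the equivalence obtained by matching summands, and your justification (tracing through Yoneda, Eilenberg--Zilber, and the multiplication on the augmentation-ideal functor $\Ib$) is asserted rather than carried out; your ``granting this compatibility'' is doing real work that is never done. The paper sidesteps the need for any such identification with a retract argument: all the splittings in sight are induced by the morphisms $B_i \to B_1 \otimes_\k B_2 \to B_i$ of $\dgArt_\k$ together with naturality of the comparison map, so $\gamma_{\SQb(B_1),\SQb(B_2)}$ is exhibited as a retract, in the arrow category, of the equivalence $\ellQ\left(\SQb(B_1 \otimes_\k B_2)\right) \simeq \dual{\Aug(B_1 \otimes_\k B_2)}$ furnished by \autoref{lem:tgtSQ}. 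A retract of an equivalence is an equivalence, and one never has to compute what the comparison map actually does on the cross term. Replacing your final paragraph by this retract observation closes the gap; the rest of your proposal can stand unchanged.
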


\begin{proof}
 We are to prove that for any pair $F, G \in \PFMP_\k^\Q$ of pointed functors (i.e. $F(\k) \simeq G(\k) \simeq 0$), the natural morphism
 \[
  \gamma_{F,G} \colon \ellQ \left(F \otimes_\Q G \right) \to \ellQ(F) \otimes_\k \ellQ(G)
 \]
 is an equivalence. Fixing $F$, we denote by $\Dd_F \subset \PFMP_\k^\Q$ the full subcategory spanned by the $G$'s such that $\gamma_{F,G}$ is an equivalence.
 Since the tensor products $\otimes_\k$ and $\otimes_\Q$ preserve colimits in each variable, and since $\ellQ$ preserves colimits, the category $\Dd_F$ is stable under colimits.
 Using \autoref{lem:generateurs}, we can therefore reduce the question to the case where $G$ (and by symmetry, also $F$) is of the form $\SQb(B)$.
 Let $B_1$ and $B_2$ be Artinian cdga's over $\k$, and assume that $F = \SQb(B_1)$ and $G = \SQb(B_2)$.
 
 The reduced Künneth formula provides a (functorial) equivalence 
 \begin{multline*}
  \SQb\left(B_1 \otimes_\k B_2\right) \simeq \widebar{\mathrm{C}}_\bullet \left( \Map_{\dgArt_\k}(B_1,-) \times \Map_{\dgArt_\k}(B_2,-), \Q \right) \\ \simeq \SQb(B_1) \otimes_\Q \SQb(B_2) \oplus \SQb(B_1) \oplus \SQb(B_2).
 \end{multline*}
 Applying $\ellQ$ on that equivalence, we find using \autoref{lem:tgtSQ}
 \[
  \dual{\Aug\left(B_1 \otimes_\k B_2\right)} \simeq \ellQ\left(\SQb(B_1) \otimes_\Q \SQb(B_2)  \right) \oplus \dual{\Aug(B_1)} \oplus \dual{\Aug(B_2)}.
 \]
 Since $B_1$ and $B_2$ are perfect as $\k$-dg-modules, the LHS identifies with
 \[
  \dual{\Aug(B_1)} \otimes_\k \dual{\Aug(B_2)} \oplus \dual{\Aug(B_1)} \oplus \dual{\Aug(B_2)}.
 \]
The map $\gamma_{\SQb(B_1), \SQb(B_2)}$ is thus a retract of the equivalence $\ellQ(\SQb(B_1 \otimes_\k B_2)) \simeq \dual{\Aug(B_1 \otimes_\k B_2)}$ and is therefore itself an equivalence.
\end{proof}
\begin{lem}\label{lem:Cactionell}
Let $C \in \cdga_\Q$ and $V \in \dgMod_C^{\leq 0}$. Let $F \colon \dgArt_\k \to \dgMod_C^{\leq 0}$ be a pre-FMP. Tensoring point-wise by $V$ defines a new pre-FMP $F \otimes_C V$. The canonical morphism
 \[
  \ellQ\left(F \otimes_C V\right) \to (\ellQ F) \otimes_C V
 \]
is an equivalence in $\dgMod_\k$.
\end{lem}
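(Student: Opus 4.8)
The plan is to deduce the statement from the colimit-preservation of $\ellQ$, reducing the whole assertion to the trivial case of the free rank-one module, so that the only real content becomes the $\dgMod_\Q$-linearity of $\ellQ$.

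First I would record that $\ellQ = \TQ \circ \LQ$ preserves all colimits, being a composite of the left adjoint $\LQ$ (Lemma \ref{lem:iCleftadjoint}) with the equivalence $\TQ$. Consequently, for a fixed $F$, both sides of the asserted equivalence are colimit-preserving functors of the module variable: $V \mapsto \ellQ(F \otimes_C V)$ preserves colimits because $V \mapsto F \otimes_C V$ and $\ellQ$ both do, and $V \mapsto (\ellQ F)\otimes_C V$ preserves colimits since $(-)\otimes_C V$ does. As $\dgMod_C^{\leq 0}$ is generated under colimits by the free module $C$, it then suffices to check that the canonical comparison map is an equivalence for $V = C$, where it reduces to the identity of $\ellQ(F)$ under the identifications $F\otimes_C C \simeq F$ and $(\ellQ F)\otimes_C C \simeq \ellQ F$.

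To make this rigorous I must, however, (i) equip $\ellQ(F)$ with its $C$-module structure and (ii) produce the comparison natural transformation coherently; both come from the $\dgMod_\Q$-linearity of $\ellQ$, i.e. from the claim that for $W \in \dgMod_\Q^{\leq 0}$ the canonical map $\ellQ(F \otimes_\Q \underline W) \to (\ellQ F)\otimes_\Q W$ (pointwise tensor by the constant functor $\underline W$ on the left) is an equivalence. This I would prove by the same colimit principle, now in the variable $W$: fixing $F$, both functors of $W$ preserve colimits, the category $\dgMod_\Q^{\leq 0}$ is generated under colimits by its unit $\Q$, and on $W = \Q$ the map is the identity, so it is an equivalence throughout. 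Granting this linearity, one finishes by applying $\ellQ$ to the two-sided bar resolution $F \otimes_C V \simeq \lvert F \otimes_\Q \underline{C^{\otimes \bullet}} \otimes_\Q \underline V \rvert$ and using that $\ellQ$ preserves the geometric realization, obtaining $\ellQ(F\otimes_C V) \simeq \lvert (\ellQ F)\otimes_\Q C^{\otimes \bullet}\otimes_\Q V\rvert \simeq (\ellQ F)\otimes_C V$, where the simplicial object on the right is precisely the bar resolution computing $(\ellQ F)\otimes_C V$ and simultaneously exhibits its $C$-module structure.

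The genuine obstacle is the monoidal bookkeeping concentrated in the linearity step: producing the comparison transformation compatibly with the simplicial structure of the bar construction, and controlling its interaction with the connective truncation $(-)^{\leq 0}$ built into $\eQ$ and hence into the target $\dgMod_\Q^{\leq 0}$. I would handle the truncation by noting that a tensor product of connective objects stays connective, so each bar term $F\otimes_\Q \underline{C^{\otimes n}\otimes_\Q V}$ remains $\dgMod_\Q^{\leq 0}$-valued and the linearity step applies with the connective $W = C^{\otimes n}\otimes_\Q V$; and, if convenient, I would first reduce to pointed $F$ via Remark \ref{rmq:lbar} (noting $\widebar{F\otimes_C V}\simeq \widebar F\otimes_C V$, since $\widebar F\otimes_C V$ already vanishes at $\k$) so as to invoke the genuine symmetric monoidality of Proposition \ref{prop:monoidal} when identifying the comparison. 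Everything past this setup is formal, the only homotopical input being the identity case $W=\Q$, equivalently $V=C$.
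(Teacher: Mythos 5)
Your proposal is correct and is essentially the paper's own proof: the paper's entire argument is your opening paragraph — the canonical morphism is natural in $V$, all functors involved preserve colimits ($\ellQ$ being the left adjoint $\LQ$ followed by the equivalence $\TQ$), so one reduces to the generating case $V = C$, which is trivial — with the construction of the canonical map and the $C$-module structure on $\ellQ F$ left implicit. The additional scaffolding you supply (the $\Q$-linearity proved by the same reduction to $W = \Q$, then the bar resolution) is a sound elaboration of that implicit step; the only off-target point is your optional appeal to \autoref{prop:monoidal}, which cannot apply to the bar terms since constant functors such as $\underline{C^{\otimes n}} \otimes_\Q \underline{V}$ are not pointed (indeed $\ellQ$ of a constant functor vanishes by \autoref{rmq:lbar}), but the linearity statement you prove separately is exactly the right tool and makes that appeal unnecessary.
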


\begin{proof}
 Since the involved functors preserves all colimits, we can reduce to the generating case $V = C$, which is trivial.
\end{proof}

\begin{cor}\label{cor:tensorellzero}
 Let $F, G \in \PFMP_\k^\Q$. If $F$ is pointed (i.e. $F(\k) \simeq 0)$) and if $\ellQ(F) \simeq 0$, then $\ellQ(F \otimes_\Q G) \simeq 0$.
\end{cor}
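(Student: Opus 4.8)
The plan is to reduce to the non-unital monoidality of \autoref{prop:monoidal} by first splitting off the non-pointed part of $G$. The hypothesis that $F$ is pointed lets us invoke that proposition, but $G$ is arbitrary, so I would begin by applying \autoref{rmq:lbar} to decompose
\[
 G \simeq \widebar G \oplus \underline{G(\k)},
\]
where $\widebar G$ is the pointed part and $\underline{G(\k)}$ denotes the constant functor at the value $G(\k) \in \dgMod_\Q^{\leq 0}$ (matching the $\underline{V}$ notation used in the proof of the lax monoidality of $\eQ$). Since the point-wise tensor product $\otimes_\Q$ preserves colimits in each variable, this yields
\[
 F \otimes_\Q G \simeq \left(F \otimes_\Q \widebar G\right) \oplus \left(F \otimes_\Q \underline{G(\k)}\right).
\]
Because $\ellQ$ is a left adjoint, hence preserves coproducts, it then suffices to show that $\ellQ$ annihilates each of the two summands separately.

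For the pointed summand $F \otimes_\Q \widebar G$, both factors $F$ and $\widebar G$ are pointed, so \autoref{prop:monoidal} applies and gives
\[
 \ellQ\left(F \otimes_\Q \widebar G\right) \simeq \ellQ(F) \otimes_\k \ellQ(\widebar G) \simeq 0,
\]
using the hypothesis $\ellQ(F) \simeq 0$. For the constant summand, I would recognize $F \otimes_\Q \underline{G(\k)}$ as the pre-FMP obtained by tensoring $F$ point-wise with the fixed object $V := G(\k) \in \dgMod_\Q^{\leq 0}$. Then \autoref{lem:Cactionell} (with $C = \Q$) yields
\[
 \ellQ\left(F \otimes_\Q \underline{G(\k)}\right) \simeq \ellQ(F) \otimes_\Q G(\k) \simeq 0,
\]
again by $\ellQ(F) \simeq 0$. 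Combining the two vanishings with the direct sum decomposition and the fact that $\ellQ$ preserves coproducts gives $\ellQ(F \otimes_\Q G) \simeq 0$.

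There is essentially no hard obstacle here: the statement is a formal consequence of the non-unital symmetric monoidality of $\ellQ$ on pointed functors (\autoref{prop:monoidal}) together with its compatibility with point-wise scalar tensoring (\autoref{lem:Cactionell}). The only point requiring a little care is that $G$ need not be pointed, which is precisely why the splitting of \autoref{rmq:lbar} must be carried out before \autoref{prop:monoidal} can be applied; once the constant part is isolated and handled by \autoref{lem:Cactionell}, everything else is bookkeeping that is clean because all the functors in sight ($\otimes_\Q$, $\otimes_\k$, and $\ellQ$) preserve colimits and therefore respect the direct sum.
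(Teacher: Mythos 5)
Your proof is correct and follows exactly the paper's own argument: split $G \simeq \widebar G \oplus \underline{G(\k)}$ into its pointed and constant parts, handle the pointed summand with \autoref{prop:monoidal} and the constant one with \autoref{lem:Cactionell} for $C = \Q$. You have merely made explicit the bookkeeping (compatibility of $\otimes_\Q$ and $\ellQ$ with the direct sum) that the paper leaves implicit.
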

\begin{proof}
We split $G$ into the direct sum $\bar G \oplus G(\k)$ where $\bar G$ is pointed and $G(\k)$ is a constant functor. We can therefore assume that $G$ is either pointed or constant.
The first case follows from \autoref{prop:monoidal}, while the second case follows from \autoref{lem:Cactionell} (for $C = \Q$).
\end{proof}

\section{Excision}
In this section, we fix $\Ac$ an algebra object in $\PFMP_\k^\Q$.
We also denote by $\bAc$ the associated pointed functor
\[
 \bAc \colon B \mapsto \hofib( \Ac(B) \to \Ac(\k) ).
\]
Note that $\bAc$ inherits a non-unital algebra structure.
Finally, we denote by $A$ the (non-unital) algebra $\ellQ(\Ac) \simeq \ellQ(\bAc)$ in $\dgMod_\k$.

\subsection{Main theorem}

\begin{thm}\label{thm:excision}
Consider the canonical morphisms
 \[
 \begin{tikzcd}[row sep=0]
  \HH_\bullet^\Q(\bAc) \ar{r}{\alpha_\HH} & \HH_\bullet^\Q(\Ac) & \ellQ(\HH_\bullet^\Q(\Ac)) \ar{r}{\beta_\HH} & \HH_\bullet^\k(A), \\
  \HC_\bullet^\Q(\bAc) \ar{r}[swap]{\alpha_\HC} & \HC_\bullet^\Q(\Ac) & \ellQ(\HC_\bullet^\Q(\Ac)) \ar{r}[swap]{\beta_\HC} & \HC_\bullet^\k(A).
 \end{tikzcd}
 \]
The following holds:
\begin{assertions}
 \item\label{ass:betagamma} The morphisms $\beta_\HH \circ \ellQ(\alpha_\HH)$ and $\beta_\HC \circ \ellQ(\alpha_\HC)$ are equivalences.
 \item\label{ass:alpha} If $A$ is H-unital, then the morphisms $\ellQ(\alpha_\HH)$ and $\ellQ(\alpha_\HC)$ are equivalences (and therefore so are $\beta_\HH$ and $\beta_\HC$).
\end{assertions}
\end{thm}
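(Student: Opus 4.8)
The plan is to read assertion (a) as a formal consequence of the monoidality of $\ellQ$, and assertion (b) as an $\ellQ$-linear reprise of the Guccione--Guccione filtration arguments behind \autoref{thm:wodzicki}, the one substantive new input being that $\ellQ$ transports the Bar complex of $\bAc$ onto that of $A$. For (a), note that every $\otimes_\Q$-power of the pointed functor $\bAc$ is again pointed, so \autoref{prop:monoidal} makes $\ellQ$ non-unitally symmetric monoidal on them; being a left adjoint, $\ellQ$ also preserves colimits. Hence $\ellQ$ carries the strict Bar, Hochschild and cyclic bicomplexes of the non-unital algebra $\bAc$ to those of $A=\ellQ(\bAc)$: one obtains $\ellQ(\Bc_\bullet^\Q(\bAc))\simeq\Bc_\bullet^\k(A)$ and $\ellQ(\Hc_\bullet^\Q(\bAc))\simeq\Hc_\bullet^\k(A)$, compatibly with $1-t$ and $N$, and therefore $\ellQ(\HH_\bullet^\Q(\bAc))\simeq\HH_\bullet^\k(A)$ and $\ellQ(\HC_\bullet^\Q(\bAc))\simeq\HC_\bullet^\k(A)$. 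By naturality of the colax structure maps along $\bAc\to\Ac$, the composites $\beta_\HH\circ\ellQ(\alpha_\HH)$ and $\beta_\HC\circ\ellQ(\alpha_\HC)$ are precisely these monoidal comparisons evaluated on $\bAc$, hence equivalences; this is (a). Granting (a), the map $\ellQ(\alpha)$ is an equivalence if and only if $\beta$ is, so for (b) it suffices to treat $\ellQ(\alpha)$.

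For (b), since $\ellQ$ preserves colimits and $\HH_\bullet^\Q$, $\HC_\bullet^\Q$ are the totalisations of bicomplexes assembled from the columns $\Bc_\bullet^\Q$, $\Hc_\bullet^\Q$ and the maps $1-t$, $N$, it is enough to prove that the maps $\ellQ(\Bc_\bullet^\Q(\bAc))\to\ellQ(\Bc_\bullet^\Q(\Ac))$ and $\ellQ(\Hc_\bullet^\Q(\bAc))\to\ellQ(\Hc_\bullet^\Q(\Ac))$ are equivalences. I would reproduce, at the level of pre-FMPs, the diagram from the proof of \autoref{thm:wodzicki} for the split extension of algebra-valued functors $\bAc\to\Ac\to\underline{\Ac(\k)}$ (whose strict kernel is $\bAc$, via the splitting of \autoref{rmq:lbar}), and then apply $\ellQ$. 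This requires $\ellQ$-linear analogues of \autoref{cor:HIHA}, \autoref{cor:BmodHunital} and \autoref{cor:HAHB}, all of which I would deduce from the single computation $\ellQ(\Bc_\bullet^\Q(\bAc))\simeq\Bc_\bullet^\k(A)\simeq 0$: the first equivalence by monoidality, the vanishing by H-unitality of $A$.

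The propagation mirrors the proof of \autoref{thm:wodzicki}. Applying $\ellQ$ to the filtration $\Fc_{\Hc_\bullet^\Q}^n$ of \autoref{lem:filtrationF} (with kernel $\bAc$), the graded pieces $\Ac^{\otimes n}\otimes\underline{\Ac(\k)}\otimes\Bc_\bullet^\Q(\bAc,M)[n+1]$ carry the tensor factor $\Bc_\bullet^\Q(\bAc,M)$, which is pointed whenever $M$ is and has $\ellQ(\Bc_\bullet^\Q(\bAc,M))\simeq\Bc_\bullet^\k(A,\ellQ M)$; for $M=\bAc$ this is $\Bc_\bullet^\k(A)\simeq 0$, and for $M=\underline{\Ac(\k)}^{\otimes j}\otimes\bAc$ it vanishes too (via \autoref{rmq:MtimesAHunital} and \autoref{lem:Cactionell}). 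Thus \autoref{cor:tensorellzero} annihilates every graded piece — it needs only one pointed $\ellQ$-acyclic factor, so the non-pointed factor $\Ac^{\otimes n}$ is harmless — and $\ellQ$ sees these filtrations as quasi-constant. This yields the $\ellQ$-analogue $\ellQ(\Hc_\bullet^\Q(\bAc,M))\simeq\ellQ(\Hc_\bullet^\Q(\Ac,M))$ of \autoref{cor:HIHA} and, for $M=\underline{\Ac(\k)}^{\otimes n+1}\otimes\bAc$ with its trivial left $\bAc$-action, the analogue $\ellQ(\Hc_\bullet^\Q(\Ac,\underline{\Ac(\k)}^{\otimes n+1}\otimes\bAc))\simeq 0$ of \autoref{cor:BmodHunital}. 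Feeding the latter into the kernels of the filtration $\Qc^n$ of \autoref{lem:kernelsfiltration} gives the analogue of \autoref{cor:HAHB}, namely $\ellQ(\Hc_\bullet^\Q(\Ac,\underline{\Ac(\k)}))\simeq\ellQ(\Hc_\bullet^\Q(\underline{\Ac(\k)}))\simeq 0$, the last term being $\ellQ$ of a constant functor.

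With these analogues, the $\ellQ$-image of the Wodzicki diagram produces
\[
 \ellQ(\Hc_\bullet^\Q(\bAc))\simeq\ellQ(\Hc_\bullet^\Q(\Ac,\bAc))\simeq\ellQ(\Hc_\bullet^\Q(\Ac)),
\]
the middle step coming from the fiber sequence $\ellQ(\Hc_\bullet^\Q(\Ac,\bAc))\to\ellQ(\Hc_\bullet^\Q(\Ac))\to\ellQ(\Hc_\bullet^\Q(\Ac,\underline{\Ac(\k)}))$ — both $\ellQ$ and $\Hc_\bullet^\Q(\Ac,-)$ preserve cofiber sequences — whose third term vanishes, the resulting equivalence being induced by $\bAc\to\Ac$. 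The same filtrations dispatch $\Bc_\bullet^\Q$ (both sides becoming $\ellQ$-acyclic), and compatibility with $1-t$, $N$ upgrades everything to the Hochschild and cyclic totalisations, so that $\ellQ(\alpha_\HH)$ and $\ellQ(\alpha_\HC)$ are equivalences — assertion (b). I expect the main obstacle to be bookkeeping rather than conceptual: one must verify that the strict filtrations of \autoref{lem:filtrationF} and \autoref{lem:kernelsfiltration} survive the $\infty$-categorical left adjoint $\ellQ$ (they do, as it preserves the relevant colimits and cofiber sequences) and that the graded pieces are genuinely pointed, the delicate point being exactly the non-pointed coefficient $\Ac$ that \autoref{cor:tensorellzero} is tailored to absorb.
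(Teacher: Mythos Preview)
Your proposal is correct and follows essentially the same architecture as the paper's proof: part (a) via the monoidality of $\ellQ$ on pointed functors (the paper isolates this as \autoref{lem:ellBH}), and part (b) by transporting the Guccione--Guccione filtrations $\Fc^n$ and $\Qc^n$ through $\ellQ$, using \autoref{cor:tensorellzero} to kill the graded pieces exactly as you describe. The only cosmetic difference is that for $\HC$ in part (a) you totalise the cyclic bicomplex directly, whereas the paper passes through the identification $\HC_\bullet^\Q(\bAc)\simeq\HH_\bullet^\Q(\bAc)\otimes_{\homol_\bullet(S^1)}\Q$ of \autoref{rmq:HChS} together with \autoref{lem:Cactionell}; both routes are valid since $\ellQ$ preserves the relevant colimits.
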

\begin{rmq}
 A direct consequence of \ref{ass:alpha} is that the tangent complex of Hochschild or cyclic homology of a given functor $\Ac$ does not depend on $\Ac(\k)$. This is an excision statement similar to \autoref{thm:wodzicki}.
\end{rmq}
\begin{cor}\label{cor:tgtK}
 Let $C \in \dgAlg_\k^{\mathrm{nu}}$ be H-unital. Denote by $\Ac_C$ the functor $B \mapsto (C \otimes_\k B)^{\leq 0}$.
 There is a functorial equivalence
 \[
  \ellAb(\K(\Ac_C)) \simeq \HC_\bullet^\k(C)[1].
 \]
\end{cor}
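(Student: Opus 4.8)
The plan is to assemble the corollary from the three inputs isolated in the body of the paper: the reduction to $\Q$-linear moduli problems (\autoref{prop:fmpQ}), Goodwillie's theorem in its H-unital form (\autoref{cor:goodwillieHunital}), and the excision statement (\autoref{thm:excision}). First I would pass to the pointed (relative) functor. Since $\k$ is both initial and final in $\dgArt_\k$, \autoref{rmq:lbar} gives $\ellAb(\K(\Ac_C)) \simeq \ellAb(\bK(\Ac_C))$, where $\bK(\Ac_C)$ denotes the pointed part $B \mapsto \hofib(\K((C \otimes_\k B)^{\leq 0}) \to \K(C^{\leq 0}))$. Next I would invoke the factorization of the adjunction $\LAb \dashv \iAb$ through the $\Q$-linear moduli problems recorded after \autoref{prop:fmpQ}: under the equivalences $\dgMod_\k \simeq \FMP_\k^\Ab \simeq \FMP_\k^\Q$, the functor $\ellAb$ identifies with $F \mapsto \ellQ(F \wedge \Q)$. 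Because rationalization commutes with finite limits, $\bK(\Ac_C) \wedge \Q$ is, pointwise in $B$, the relative rational $\K$-theory of the extension $(C \otimes_\k B)^{\leq 0} \to C^{\leq 0}$, so that $\ellAb(\K(\Ac_C)) \simeq \ellQ(\bK(\Ac_C) \wedge \Q)$.

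Second, I would run Goodwillie's equivalence. For each Artinian $B$, the map $(C \otimes_\k B)^{\leq 0} \to C^{\leq 0}$ is surjective on $\homol^0$ with kernel $(C \otimes_\k \Aug(B))^{\leq 0}$, which is nilpotent since the augmentation ideal of an Artinian is; moreover both source and target are H-unital (this is where one uses that $C$ is H-unital, together with unitality of $B$). Thus \autoref{cor:goodwillieHunital} applies pointwise, and the naturality of the relative Chern character $\bch$ promotes it to an equivalence of $\Q$-linear pre-FMPs $\bK(\Ac_C) \wedge \Q \simeq \bHC_{\bullet-1}^\Q(\Ac_C)$. Applying $\ellQ$ yields $\ellAb(\K(\Ac_C)) \simeq \ellQ(\bHC_{\bullet-1}^\Q(\Ac_C))$.

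Third, I would feed this into excision. One first checks $\ellQ(\Ac_C) \simeq C$: the pointed part $\bAc_C \colon B \mapsto (C \otimes_\k \Aug(B))^{\leq 0}$ is precisely $\iQ \eQ(C)$, so $\ellQ(\Ac_C) \simeq \ellQ(\bAc_C) \simeq C$, which is H-unital by hypothesis. Since $\bHC_{\bullet-1}^\Q(\Ac_C)$ is the pointed part of $\HC_{\bullet-1}^\Q(\Ac_C)$, \autoref{rmq:lbar} gives $\ellQ(\bHC_{\bullet-1}^\Q(\Ac_C)) \simeq \ellQ(\HC_{\bullet-1}^\Q(\Ac_C))$. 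Now \autoref{thm:excision}, applied with $\Ac = \Ac_C$ so that $A = \ellQ(\Ac_C) \simeq C$ is H-unital, shows that $\ellQ(\alpha_\HC)$ and $\beta_\HC$ are equivalences; in particular $\ellQ(\HC_{\bullet-1}^\Q(\Ac_C)) \simeq \HC_{\bullet-1}^\k(C)$. Chaining the three equivalences and rewriting $\HC_{\bullet-1}^\k(C) \simeq \HC_\bullet^\k(C)[1]$ gives the claim.

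The genuine mathematical content sits in the excision \autoref{thm:excision} (which combines the monoidality of $\ellQ$ from \autoref{prop:monoidal} with a Wodzicki-type argument) and in Goodwillie's theorem, both of which are available. Within this assembly the delicate points are therefore bookkeeping rather than substance: ensuring the Chern character equivalence is natural across all of $\dgArt_\k$ so that it genuinely lives in $\PFMP_\k^\Q$ before $\ellQ$ is applied, verifying the H-unitality and nilpotence hypotheses of \autoref{cor:goodwillieHunital} uniformly in $B$, and pinning down the identification $\ellQ(\Ac_C) \simeq C$ that licenses the use of excision. I expect the uniform verification of H-unitality for the family $(C \otimes_\k B)^{\leq 0}$ to be the step requiring the most care.
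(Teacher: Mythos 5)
Your proposal is correct and follows essentially the same route as the paper's own proof: the identical chain $\ellAb(\K(\Ac_C)) \simeq \ellAb(\bK(\Ac_C))$ (\autoref{rmq:lbar}), $\simeq \ellQ(\bK(\Ac_C) \wedge \Q)$ (\autoref{prop:fmpQ}), $\simeq \ellQ(\bHC_\bullet^\Q(\Ac_C)[1])$ (\autoref{cor:goodwillieHunital}), $\simeq \ellQ(\HC_\bullet^\Q(\Ac_C))[1]$ (\autoref{rmq:lbar} again), $\simeq \HC_\bullet^\k(C)[1]$ (\autoref{thm:excision}), anchored by the same identification $\bAc_C = \eQ(C)$, hence $\ellQ(\Ac_C) \simeq C$. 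The extra verifications you flag (nilpotence of the kernel on $\homol^0$, pointwise H-unitality, naturality of the Chern character over $\dgArt_\k$) are exactly the details the paper leaves implicit, so they add care rather than a different argument.
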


\begin{proof}
  We have $\bAc_C = \eQ(C)$ and therefore $\ellQ(\Ac_C) \simeq \ellQ(\bAc_C) \simeq C$.
  We find
 \begin{align*}
  \ellAb(\K(\Ac_C)) &\simeq \ellAb(\bK(\Ac_C)) & &\text{by \autoref{rmq:lbar}}
  \\ &\simeq \ellQ( \bK(\Ac_C) \wedge \Q)  & &\text{by \autoref{prop:fmpQ}}
  \\ &\simeq \ellQ(\bHC_\bullet^\Q(\Ac_C)[1])  & &\text{by \autoref{cor:goodwillieHunital}}
  \\ &\simeq \ellQ(\HC_\bullet^\Q(\Ac_C))[1]   & &\text{by \autoref{rmq:lbar}}
  \\ &\simeq \HC_\bullet^\k(C)[1]  & &\text{by \autoref{thm:excision}}.
 \end{align*}
\end{proof}

\begin{rmq}\label{rmk:ellKnc}
Using the notations of \autoref{df:FMPC} for $\Cc = \Sp$ the category of spectra and the proof of \autoref{prop:abelianlie}, we get that $\FMP^\Sp_\k$ ($\simeq \Cc_3$) is equivalent to $\dgMod_\k$.
 We get an adjunction
\[
 \ell^\Sp \colon \PFMP_\k^\Sp \rightleftarrows \FMP_\k^\Sp \simeq \dgMod_\k \noloc \e_\Sp.
\]
It follows from \autoref{rmk:relKnc} and our main theorem that for any algebra object $\Ac$ in $\PFMP_\k^\Q$ such that $A := \ellQ(\Ac)$ is H-unital, we have
\[
 \ell^\Sp (\K^\mathrm{nc}(\Ac)) \simeq \ellAb (\K(\Ac)) \simeq \HC^\k_\bullet(A)[1].
\]
\end{rmq}

\begin{rmq}
 The equivalence of \autoref{cor:tgtK} is defined through the relative Chern character. We know from \cite{cathelineau:lambda} and \cite{chw:lambda} that the Chern character is compatible with the $\lambda$-operations on both sides. It follows that the equivalence of \autoref{cor:tgtK} is also compatible with the $\lambda$-operations. 
\end{rmq}

\begin{rmq}[Goodwillie derivative]\label{rmk:goodwilliederivative}
 Denote by $f$ the functor $\Perf_\k^{\leq 0} \to \dgArt_\k$ mapping a connective perfect $\k$-complex $M$ to the split square zero extension $\k \oplus M$. Restricting along $f$ defines a functor from $\PFMP_\k^{\Sp_{\geq 0}}$ to the category of functors $F \colon \Perf_\k^{\leq 0} \to \Sp_{\geq 0}$. Such functors $F$ satisfying some Schlessinger-like condition form a category equivalent to that of $\k$-complexes. We find a commutative diagram
 \[
 \begin{tikzcd}
  \Fct(\Perf_\k^{\leq 0}, \Sp_{\geq 0}) && \ar{ll}[swap]{- \circ f} \PFMP_\k^{\Ab} \\
& \dgMod_\k \ar{ur}[swap]{\eAb} \ar{ul}{\phi}
 \end{tikzcd}
 \]
 where $\phi$ (as well as $\eAb$) is fully faithful. Now, the left adjoints $\ellAb$ and (say) $\psi$ of $\eAb$ and $\phi$ respectively, do not commute with $- \circ f$.
 The functor $\psi$ can actually be interpreted in terms of Goodwillie derivative. For instance, it can be proved to map $\K(\Ac_\k) \circ f$ to $\k \in \dgMod_\k$. Note that $\k[1]$, seen as a $\k \otimes \k$-module, corepresents the Goodwillie derivative of the $\K$-theory of $\k$:
 \[
 \partial \K \simeq \HH_\bullet(\k, -)[1] \colon \dgMod_{\k \otimes \k} \to \Sp.
 \]
 More generally, one can prove $\psi(\K(\Ac_C) \circ f) \simeq \HH_\bullet^\k(C)[1]$. Moreover, the Beck-Chevalley transformation $\psi \circ (- \circ f) \to \ellAb$ is identified with the usual morphism $\HH_\bullet^\k(C)[1] \to \HC_\bullet^\k(C)[1]$.
 
 We will not use the above remark in what follows, so we will not provide a proof.
 Note however that understanding the relationship between $\psi$ and $\ellAb$ (and a putative circle action) may give us a less computational (and more conceptual) proof of \autoref{cor:tgtK}, based directly on Goodwillie calculus.
\end{rmq}

\begin{lem}\label{lem:ellBH}
 Let $\Mc \in \PFMP_\k^\Q$ be an $\bAc$-bimodule. Assume that $\Mc(\k) \simeq 0$. We set $M := \ellQ(\Mc)$ as an $A$-bimodule.
 The canonical morphisms 
 \begin{align*}
  \beta\alpha_\Bc &\colon \ellQ\left(\Bc_\bullet^\Q(\bAc, \Mc) \right) \to \Bc_\bullet^\k(A,M)\\
  \beta\alpha_\Hc &\colon \ellQ\left(\Hc_\bullet^\Q(\bAc, \Mc) \right) \to \Hc_\bullet^\k(A,M)
 \end{align*}
 are equivalences.
\end{lem}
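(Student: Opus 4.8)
The plan is to reduce the whole statement to the monoidality of $\ellQ$ recorded in \autoref{prop:monoidal}, together with the fact that $\ellQ$, being a left adjoint, preserves colimits. The point is that both $\Bc_\bullet^\Q(\bAc,\Mc)$ and $\Hc_\bullet^\Q(\bAc,\Mc)$ are built entirely out of the symmetric monoidal product $\otimes_\Q$, colimits, and the structure maps making $\bAc$ a (non-unital) algebra and $\Mc$ an $\bAc$-bimodule. A colimit-preserving symmetric monoidal functor therefore commutes with the entire construction, sending it to the analogous construction for the algebra $A = \ellQ(\bAc)$ and the bimodule $M = \ellQ(\Mc)$ in $\dgMod_\k$, namely $\Bc_\bullet^\k(A,M)$ and $\Hc_\bullet^\k(A,M)$.

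The first step is to check that \autoref{prop:monoidal} actually applies to every object in sight, i.e.\ that everything is pointed. The functor $\bAc$ is pointed by construction, $\Mc$ is pointed by hypothesis ($\Mc(\k)\simeq 0$), and the class of pointed functors is closed under $\otimes_\Q$ since $(F \otimes_\Q G)(\k) \simeq F(\k) \otimes_\Q G(\k)$. Hence each term $\Mc \otimes_\Q \bAc^{\otimes n}$ is pointed, and \autoref{prop:monoidal} yields, for every $n$, an equivalence
\[
 \ellQ\left(\Mc \otimes_\Q \bAc^{\otimes n}\right) \to^\sim \ellQ(\Mc) \otimes_\k \ellQ(\bAc)^{\otimes n} = M \otimes_\k A^{\otimes n}.
\]
These are the termwise constituents of the canonical maps $\beta\alpha_\Bc$ and $\beta\alpha_\Hc$.

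It then remains to assemble the termwise equivalences into an equivalence of total complexes. Since $\ellQ$ is symmetric monoidal on pointed functors, it carries the algebra object $\bAc$ and the bimodule $\Mc$ to the algebra $A$ and bimodule $M$, and sends the multiplication $\bAc \otimes_\Q \bAc \to \bAc$ and the two module actions to the corresponding structure maps of $A$ and $M$; it is symmetric monoidal, so it is moreover compatible with the braiding used in the cyclic face map. Consequently the termwise equivalences intertwine the differential $-b'$ (resp.\ the Hochschild differential $b$) on the source with the corresponding differentials on $\Bc_\bullet^\k(A,M)$ (resp.\ $\Hc_\bullet^\k(A,M)$). Finally, writing the $\oplus$-total complex as the filtered colimit over $N$ of its subcomplexes of bar-degree $\leq N$ (a degreewise-finite sum by connectivity), and using that $\ellQ$ preserves this colimit and the finite colimits defining each truncation, identifies $\ellQ(\Bc_\bullet^\Q(\bAc,\Mc))$ with $\Bc_\bullet^\k(A,M)$ compatibly with $\beta\alpha_\Bc$, and likewise for $\Hc$.

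The only point requiring genuine care — and the main obstacle — is the compatibility of the termwise equivalences with the differentials: the argument does not merely need a termwise quasi-isomorphism but the fact that \autoref{prop:monoidal} produces an honest symmetric monoidal functor on pointed objects, so that it respects the algebra and bimodule structures and hence the maps $b'$ and $b$. The colimit-preservation needed for the totalization is, by contrast, automatic from $\ellQ$ being a left adjoint.
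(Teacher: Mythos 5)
Your proof is correct and follows essentially the same route as the paper's: both reduce the statement to \autoref{prop:monoidal} together with the fact that $\ellQ$, being a left adjoint, preserves colimits, after observing that $\bAc$, $\Mc$ and hence all the terms $\Mc \otimes_\Q \bAc^{\otimes n}$ are pointed. The only (immaterial) difference is bookkeeping: the paper realizes the reduced Bar complex as the homotopy colimit of the semi-simplicial Bar construction and then takes the cofiber of the augmentation, whereas you totalize via the filtered colimit of bar-degree truncations; both devices serve exactly to propagate the termwise equivalences $\ellQ(\Mc \otimes_\Q \bAc^{\otimes n}) \simeq M \otimes_\k A^{\otimes n}$, coherently with the face maps, through the construction.
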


\begin{proof}
 The augmented Bar complex $\Bc_\bullet^\Q(\bAc,\Mc)$ identifies as the homotopy cofiber of the augmentation $\widebar{\Bc}_\Q(\bAc,\Mc) \to \Mc$, where $\widebar{\Bc}_\Q(-,-)$ denotes the reduced Bar complex. The latter is obtained as a homotopy colimit of the semi-simplicial Bar construction.
 Since $\ellQ$ preserves colimits, we find using \autoref{prop:monoidal}
 \[
  \ellQ\left(\widebar{\Bc}_\Q(\bAc,\Mc)\right) \simeq \ellQ\left(\colim_{[n] \in \Delta} \Mc \otimes_\Q \bAc^{\otimes_\Q n}\right) \simeq \colim_{[n] \in \Delta} M \otimes_\k A^{\otimes_\k n} \simeq \widebar{\Bc}_\k(A,M).
 \]
 Taking the homotopy cofiber of the augmentation on both sides, we find the first claimed equivalence.
 Similarly, the functor $\Hc_\bullet^\Q(\bAc, \Mc)$ is again the homotopy colimit of a standard semi-simplicial diagram.
\end{proof}

\begin{proof}[of \autoref{thm:excision} (a)]
 The functor $\HH_\bullet^\Q$ is the homotopy cofiber of the natural transformation $1-t \colon \Bc_\bullet^\Q \to \Hc_\bullet^\Q$. In particular, the morphism $\beta_{\HH} \circ\ellQ(\alpha_{\HH})$ is a equivalence because of \autoref{lem:ellBH} (for $\Mc = \bAc$) and the fact that $\ellQ$ preserves cofiber sequences.
 In the case of $\HC$, we use \autoref{rmq:HChS} and \autoref{lem:Cactionell}:
 \begin{multline*}
  \ellQ\left(\HC_\bullet^\Q(\bAc)\right) \simeq \ellQ\left(\HH_\bullet^\Q(\bAc) \otimes_{\Q[\epsilon]} \Q \right) \simeq \ellQ\left(\HH_\bullet^\Q(\bAc)\right) \otimes_{\Q[\epsilon]} \Q
  \\\simeq \HH_\bullet^\k(A) \otimes_{\Q[\epsilon]} \Q
  \simeq \HC_\bullet^\k(A),
 \end{multline*}
 where $\Q[\epsilon] := \homol_\bullet(S^1, \Q) \in \cdga_\Q$.
\end{proof}

\subsection{Proof of \ref{ass:alpha}}
The proof of \ref{ass:alpha} in \autoref{thm:excision} is more evolved and relies on the ideas behind Wodzicki's \autoref{thm:wodzicki}. We first reduce the study of Hochschild and cyclic homology to that of the complexes $\Hc$ and $\Bc$ from \autoref{subsec:cyclic}.
We will use the following terminology:
 \begin{df}
  A morphism $f \colon F \to G \in \PFMP_\k^\Q$ is an  $\ellQ$-equivalence if $\ellQ(f)$ is an equivalence.
 \end{df}
 
Denote by $\alpha_\Hc$ and $\alpha_\Bc$ the canonical morphisms
\[
\begin{tikzcd}[row sep=0]
  \Hc_\bullet^\Q(\bAc) \ar{r}{\alpha_\Hc} & \Hc_\bullet^\Q(\Ac), \\
  \Bc_\bullet^\Q(\bAc) \ar{r}[swap]{\alpha_\Bc} & \Bc_\bullet^\Q(\Ac).
 \end{tikzcd}
 \]
 As in the previous section, we can easily reduce the proof of \autoref{thm:excision}, \ref{ass:alpha} to proving that both $\alpha_\Hc$ and $\alpha_\Bc$ are $\ellQ$-equivalences.

 \begin{lem}
  If both $\alpha_\Hc$ and $\alpha_\Bc$ are $\ellQ$-equivalences, then so are $\alpha_\HH$ and $\alpha_\HC$.
 \end{lem}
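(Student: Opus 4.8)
The plan is to exploit the fact that both $\HH_\bullet^\Q$ and $\HC_\bullet^\Q$ are assembled from $\Bc_\bullet^\Q$ and $\Hc_\bullet^\Q$ by colimit-type operations, all of which the left adjoint $\ellQ$ commutes with; the $\ellQ$-equivalence property will then be inherited, and no new input beyond $\alpha_\Hc$ and $\alpha_\Bc$ is needed.

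First I would treat $\alpha_\HH$. Recall that $\HH_\bullet^\Q(-)$ is the homotopy cofiber of the natural transformation $1-t \colon \Bc_\bullet^\Q(-) \to \Hc_\bullet^\Q(-)$. Evaluating this on the morphism $\bAc \to \Ac$ produces a commutative square in $\PFMP_\k^\Q$ whose horizontal arrows are $\alpha_\Bc$ and $\alpha_\Hc$ and whose two vertical arrows are the corresponding copies of $1-t$; by construction $\alpha_\HH$ is the morphism induced on the horizontal cofibers. Since $\ellQ$ is a left adjoint, it preserves cofiber sequences, so $\ellQ(\alpha_\HH)$ fits into a morphism of cofiber sequences whose remaining two vertical maps are $\ellQ(\alpha_\Bc)$ and $\ellQ(\alpha_\Hc)$. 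These are equivalences by hypothesis, and stability of $\dgMod_\k$ then forces $\ellQ(\alpha_\HH)$ to be an equivalence as well.

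For $\alpha_\HC$ I would avoid the infinite periodicity bicomplex and instead invoke the $S^1$-action description of \autoref{rmq:HChS}: naturally in $A$, the complex $\HH_\bullet^\Q(A)$ carries an action of $\homol_\bullet(S^1) = \Q[\epsilon]$ with $\HC_\bullet^\Q(A) \simeq \HH_\bullet^\Q(A) \otimes_{\Q[\epsilon]} \Q$. Thus $\HH_\bullet^\Q(\bAc)$ and $\HH_\bullet^\Q(\Ac)$ are pre-FMPs valued in connective $\Q[\epsilon]$-modules, the map $\alpha_\HH$ is $\Q[\epsilon]$-linear, and $\alpha_\HC \simeq \alpha_\HH \otimes_{\Q[\epsilon]} \Q$. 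Applying \autoref{lem:Cactionell} with $C = \Q[\epsilon]$ and $V = \Q$ to both $\bAc$ and $\Ac$ yields natural equivalences $\ellQ(\HC_\bullet^\Q(-)) \simeq \ellQ(\HH_\bullet^\Q(-)) \otimes_{\Q[\epsilon]} \Q$, under which $\ellQ(\alpha_\HC)$ is identified with $\ellQ(\alpha_\HH) \otimes_{\Q[\epsilon]} \Q$. Since $\ellQ(\alpha_\HH)$ is an equivalence by the previous paragraph, so is $\ellQ(\alpha_\HC)$.

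The argument is essentially formal once $\alpha_\Hc$ and $\alpha_\Bc$ are $\ellQ$-equivalences; the only points requiring care are that the transformation $1-t$ and the $\Q[\epsilon]$-action are natural in $A$ — so that $\alpha_\HH$ and $\alpha_\HC$ genuinely are the maps induced on cofibers and on homotopy $S^1$-coinvariants — and that $\ellQ$ commutes both with the relevant cofibers and with the $\Q[\epsilon]$-linear base change. The first is guaranteed by the functoriality of the cyclic bicomplex in $A$, and the second by \autoref{lem:Cactionell}, so I do not expect a real obstacle here: all the substantive work has been isolated into proving that $\alpha_\Hc$ and $\alpha_\Bc$ are $\ellQ$-equivalences.
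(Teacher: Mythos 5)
Your proposal is correct and takes essentially the same approach the paper intends: the paper gives no separate proof of this lemma, deferring (``as in the previous section'') to its proof of part (a) of \autoref{thm:excision}, which runs exactly as you do --- $\HH_\bullet^\Q$ is the homotopy cofiber of $1-t \colon \Bc_\bullet^\Q \to \Hc_\bullet^\Q$ and $\ellQ$ preserves cofiber sequences, while $\HC_\bullet^\Q \simeq \HH_\bullet^\Q \otimes_{\Q[\epsilon]} \Q$ by \autoref{rmq:HChS} and $\ellQ$ commutes with this base change by \autoref{lem:Cactionell}. Both of your steps, including the stability argument in $\dgMod_\k$ and the choice $C = \Q[\epsilon]$, $V = \Q$, match the paper's argument, so there is nothing to correct.
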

 
 We will now focus on $\alpha_\Hc$ and $\alpha_\Bc$ using techniques from \autoref{subsec:wodzicki}.
 
\begin{prop}\label{prop:alphaelleq}
 The canonical morphisms
 \[
  \alpha_\Bc \colon \Bc_\bullet^\Q(\bAc) \to \Bc_\bullet^\Q(\Ac) \hspace{2em} \text{and} \hspace{2em} \alpha_\Hc \colon \Hc_\bullet^\Q(\bAc) \to \Hc_\bullet^\Q(\Ac)
 \]
 are $\ellQ$-equivalences.
\end{prop}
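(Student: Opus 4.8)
The plan is to mirror, levelwise in $B \in \dgArt_\k$, the proof of \autoref{thm:wodzicki} given in \autoref{subsec:wodzicki}, applied to the extension of algebra objects $\bAc \to \Ac \to \Ac(\k)$ in $\PFMP_\k^\Q$ (with $\Ac(\k)$ the constant functor), but systematically replacing every appeal to quasi-isomorphisms and to pointwise H-unitality by the corresponding $\ellQ$-statements. After replacing the augmentation $\Ac \to \Ac(\k)$ by a pointwise fibration with kernel $\bAc$, all the strict constructions of \autoref{subsec:wodzicki} -- the complexes $\Bc_\bullet^\Q$, $\Hc_\bullet^\Q$, the filtrations $\Fc^\bullet$ and $\Qc^\bullet$ of \autoref{lem:filtrationF} and \autoref{lem:kernelsfiltration}, and the identifications of their graded pieces and kernels -- apply levelwise and assemble into functors and cofiber sequences in $\PFMP_\k^\Q$. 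Since $\ellQ$ is a left adjoint it preserves these filtered colimits and cofiber sequences, and since it is monoidal on pointed functors (\autoref{prop:monoidal}, \autoref{cor:tensorellzero}) it interacts well with the tensor factors appearing there. I will use two standing facts: by \autoref{lem:ellBH} one has $\ellQ(\Bc_\bullet^\Q(\bAc)) \simeq \Bc_\bullet^\k(A) \simeq 0$ because $A$ is H-unital, and by \autoref{rmq:lbar} every constant functor is $\ellQ$-acyclic.

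First I would relativize \autoref{cor:BmodHunital}: for a constant left $\Ac(\k)$-module $N$, I claim $\ellQ(\Bc_\bullet^\Q(\Ac, N \otimes_\Q \bAc)) \simeq 0$ and $\ellQ(\Hc_\bullet^\Q(\Ac, N \otimes_\Q \bAc)) \simeq 0$. Running the $\Fc^\bullet$-filtration with coefficients $M = N \otimes_\Q \bAc$, \autoref{lem:filtrationF} identifies the successive quotients, for both the Bar and the Hochschild filtration, with $\Ac^{\otimes k} \otimes_\Q \Ac(\k) \otimes_\Q \Bc_\bullet^\Q(\bAc, N \otimes_\Q \bAc)[k+1]$ (the same Bar complex in both cases); this factor is pointed and, by \autoref{lem:ellBH} and \autoref{lem:Cactionell}, satisfies $\ellQ(\Bc_\bullet^\Q(\bAc, N \otimes_\Q \bAc)) \simeq \Bc_\bullet^\k(A) \otimes_\Q N \simeq 0$, so each quotient is $\ellQ$-acyclic by \autoref{cor:tensorellzero}. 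Hence $\ellQ$ of the whole complex agrees with $\ellQ$ of the base term $\Fc^0 = \Bc_\bullet^\Q(\bAc, N \otimes_\Q \bAc)$, which vanishes (the Hochschild case is identical, the extra cyclic term being trivial because the left $\bAc$-action on $N \otimes_\Q \bAc$ factors through $\bAc \to \Ac(\k) = 0$). With this in hand, the $\Qc^\bullet$-filtration of \autoref{lem:kernelsfiltration} relativizes \autoref{cor:HAHB}: its kernels are $\Bc_\bullet^\Q(\Ac, \Ac(\k)^{\otimes n+1} \otimes_\Q \bAc)[n+1]$ and $\Hc_\bullet^\Q(\Ac, \Ac(\k)^{\otimes n+1} \otimes_\Q \bAc)[n+1]$, now $\ellQ$-acyclic, so $\ellQ(\Bc_\bullet^\Q(\Ac, \Ac(\k))) \simeq \ellQ(\Bc_\bullet^\Q(\Ac(\k)))$, the latter a constant functor, hence $\simeq 0$; the same gives $\ellQ(\Hc_\bullet^\Q(\Ac, \Ac(\k))) \simeq 0$.

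To finish I would factor $\alpha_\Bc$ and $\alpha_\Hc$ as a change of algebra followed by a change of coefficients. The change of coefficients sits in the fiber sequences obtained by applying the exact functors $\Bc_\bullet^\Q(\Ac, -)$ and $\Hc_\bullet^\Q(\Ac, -)$ to $\bAc \to \Ac \to \Ac(\k)$; since the third term is $\ellQ$-acyclic by the previous paragraph, the maps $\ellQ(\Bc_\bullet^\Q(\Ac, \bAc)) \to \ellQ(\Bc_\bullet^\Q(\Ac))$ and $\ellQ(\Hc_\bullet^\Q(\Ac, \bAc)) \to \ellQ(\Hc_\bullet^\Q(\Ac))$ are equivalences. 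The change of algebra $\Bc_\bullet^\Q(\bAc) \to \Bc_\bullet^\Q(\Ac, \bAc)$ (and its Hochschild analogue) is the inclusion of $\Fc^0$ into $\colim_n \Fc^n$; by \autoref{lem:filtrationF} its quotients are $\Ac^{\otimes k} \otimes_\Q \Ac(\k) \otimes_\Q \Bc_\bullet^\Q(\bAc)[k+1]$, which are $\ellQ$-acyclic by \autoref{cor:tensorellzero} since $\ellQ(\Bc_\bullet^\Q(\bAc)) \simeq 0$, so this relativizes \autoref{cor:HIHA} and is an $\ellQ$-equivalence. Composing the two, $\ellQ(\alpha_\Bc)$ and $\ellQ(\alpha_\Hc)$ are equivalences.

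The hard part, and the point where this is genuinely more delicate than \autoref{thm:wodzicki}, is that I cannot assume $\bAc$ is H-unital levelwise; only the single $\k$-algebra $A = \ellQ(\bAc)$ is. I therefore cannot contract the Bar complexes pointwise, and must instead interleave each filtration argument with an application of $\ellQ$, using \autoref{lem:ellBH} to convert ``$\ellQ$ of a Bar complex of the pointed ideal $\bAc$'' into ``a Bar complex of $A$'' before invoking its acyclicity. The remaining work -- checking that the pointwise filtrations are natural in $B$, assemble into honest cofiber sequences of pre-FMPs, and that the two elementary factorizations indeed compose to $\alpha_\Bc$ and $\alpha_\Hc$ -- is routine but careful bookkeeping.
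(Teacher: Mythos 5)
Your proposal is correct and follows essentially the same route as the paper's proof: the same factorization of $\alpha_\Bc$ and $\alpha_\Hc$ through $\Bc_\bullet^\Q(\Ac,\bAc)$ and $\Hc_\bullet^\Q(\Ac,\bAc)$, the same two filtrations from \autoref{subsec:wodzicki} with $\ellQ$ interleaved at each stage, and the same key inputs (\autoref{lem:ellBH}, \autoref{lem:Cactionell}, \autoref{cor:tensorellzero}, \autoref{rmq:lbar}, and the H-unitality of $A = \ellQ(\bAc)$ rather than of $\bAc$ pointwise). The only difference is packaging: where the paper isolates \autoref{lem:gammaeqs} (for an arbitrary pointed bimodule $\Mc$ with $\ellQ(\Mc)$ H-unitary) and \autoref{lem:deltaeqs}, you prove the corresponding $\ellQ$-acyclicity statements directly for the specific coefficients $N \otimes_\Q \bAc$ and $\bAc$.
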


 \begin{lem}\label{lem:gammaeqs}
  Let $\Mc \in \PFMP_\k^\Q$ be an $\Ac$-bimodule.
  Assume that $\Mc(\k) \simeq 0$ and that $M:= \ellQ(\Mc)$ is H-unital as an $A = \ellQ(\bAc)$-bimodule, then
  \[
   \gamma_\Hc \colon \Hc_\bullet^\Q(\bAc,\Mc) \to \Hc_\bullet^\Q(\Ac, \Mc) \hspace{2em} \text{and} \hspace{2em} 
   \gamma_\Bc\colon\Bc_\bullet^\Q(\bAc,\Mc) \to \Bc_\bullet^\Q(\Ac, \Mc)
  \]
  are $\ellQ$-equivalences.
 \end{lem}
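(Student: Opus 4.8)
The plan is to run the Guccione--Guccione filtration argument of \autoref{subsec:wodzicki} point-wise and then pass through $\ellQ$. I would treat the augmentation $\bAc \to \Ac \to \underline{\Ac(\k)}$ of algebra objects in $\PFMP_\k^\Q$ as the exact analogue of an extension $I \to A \to B$ of non-unital dg-algebras. By \autoref{rmq:lbar} the functor $\Ac$ splits as $\bAc \oplus \underline{\Ac(\k)}$, so after a harmless point-wise replacement (legitimate by the invariance results following \autoref{thm:wodzicki}) we may assume this augmentation is a degreewise surjection with strict kernel $\bAc$. Applying the constructions of \autoref{subsec:wodzicki} levelwise then produces increasing filtrations $\Fc^n_{\Bc_\bullet^\Q}$ and $\Fc^n_{\Hc_\bullet^\Q}$ of $\Bc_\bullet^\Q(\Ac,\Mc)$ and $\Hc_\bullet^\Q(\Ac,\Mc)$ with $\Fc^0_{\Bc_\bullet^\Q} = \Bc_\bullet^\Q(\bAc,\Mc)$ and $\Fc^0_{\Hc_\bullet^\Q} = \Hc_\bullet^\Q(\bAc,\Mc)$, whose colimits over $n$ recover the $\Ac$-complexes. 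The maps $\gamma_\Bc$ and $\gamma_\Hc$ are then exactly the canonical maps $\Fc^0 \to \colim_n \Fc^n$.

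Since $\ellQ$ preserves colimits, it suffices to show that each inclusion $\Fc^n \hookrightarrow \Fc^{n+1}$ is an $\ellQ$-equivalence; and since $\ellQ$ preserves cofiber sequences, it is enough that $\ellQ$ annihilate each graded piece $\Fc^{n+1}/\Fc^n$. The crucial input is \autoref{lem:filtrationF}, applied point-wise: for both the $\Bc$- and the $\Hc$-filtration this graded piece is isomorphic to $\Ac^{\otimes_\Q n} \otimes_\Q \underline{\Ac(\k)} \otimes_\Q \Bc_\bullet^\Q(\bAc,\Mc)[n+1]$, so in each case the relevant tensor factor is the \emph{Bar} complex of $\bAc$.

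The heart of the matter is that $\Bc_\bullet^\Q(\bAc,\Mc)$ is $\ellQ$-acyclic. It is a pointed functor, as $\bAc(\k) \simeq 0$ and $\Mc(\k) \simeq 0$; and \autoref{lem:ellBH} (whose only hypothesis, $\Mc(\k)\simeq 0$, holds) gives $\ellQ(\Bc_\bullet^\Q(\bAc,\Mc)) \simeq \Bc_\bullet^\k(A,M)$, which vanishes precisely because $M = \ellQ(\Mc)$ is H-unitary over $A = \ellQ(\bAc)$. Feeding this into \autoref{cor:tensorellzero} with $F = \Bc_\bullet^\Q(\bAc,\Mc)$ and $G = \Ac^{\otimes_\Q n} \otimes_\Q \underline{\Ac(\k)}$ (the shift being immaterial in the stable category $\dgMod_\k$) yields $\ellQ(\Fc^{n+1}/\Fc^n) \simeq 0$. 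Hence every step of both filtrations is an $\ellQ$-equivalence, and passing to the colimit shows that $\gamma_\Bc$ and $\gamma_\Hc$ are $\ellQ$-equivalences.

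The one genuinely non-formal point, where I expect the real content to lie, is that we never claim the graded pieces are acyclic on the nose: $M$ need not be H-unitary before $\ellQ$ is applied, so the strict corollaries \autoref{cor:HIHA} and \autoref{cor:HAHB} are unavailable. The vanishing appears only after $\ellQ$, and it is the non-unital symmetric monoidality of $\ellQ$ on pointed functors (\autoref{prop:monoidal}, repackaged as \autoref{cor:tensorellzero}) that lets us peel off the $\ellQ$-acyclic factor $\Bc_\bullet^\Q(\bAc,\Mc)$. The remaining work is bookkeeping: checking that the levelwise filtrations and the graded identifications of \autoref{lem:filtrationF} descend to the localized $\infty$-categories, which is ensured by the invariance statements following \autoref{thm:wodzicki}.
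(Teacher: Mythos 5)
Your proposal is correct and takes essentially the same route as the paper's own proof: exhibit $\gamma_\Hc$ and $\gamma_\Bc$ as the maps $\Fc^0 \to \colim_n \Fc^n$ for the filtration of \autoref{subsec:wodzicki}, use that $\ellQ$ preserves colimits and that $\dgMod_\k$ is stable to reduce to the vanishing of $\ellQ$ on the graded pieces, identify those pieces via \autoref{lem:filtrationF} as $\Ac^{\otimes n} \otimes \Ac(\k) \otimes \Bc_\bullet^\Q(\bAc,\Mc)[n+1]$, and kill them by combining \autoref{lem:ellBH} (which uses the H-unitarity of $M$ over $A$) with \autoref{cor:tensorellzero}. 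The only difference is presentational: the paper does not dwell on the strictification and invariance bookkeeping you flag at the beginning and end, and your closing remark correctly isolates the actual content, namely that the vanishing happens only after applying $\ellQ$, via its monoidality on pointed functors.
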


 \begin{proof}
  We focus on $\gamma_\Hc$, the case of $\gamma_\Bc$ being identical.
  Denote by $f$ the canonical natural transformation $\Ac \to \Ac(\k)$ (where on the RHS is the constant functor). We get $\bAc \simeq \hofib(f)$.
  Recall from \autoref{subsec:wodzicki} \autoref{par:filtrationF} the filtration
  \begin{multline*}
   \Hc_\bullet^\Q(\bAc,\Mc) \simeq \Fc_{\Hc_\bullet^\Q}^0(f,\Mc) \to \cdots \to \Fc_{\Hc_\bullet^\Q}^n(f,\Mc) \to \cdots
   \\
   \text{with  }\colim_n \Fc_{\Hc_\bullet^\Q}^n(f,\Mc) \simeq \Hc_\bullet^\Q(\Ac, \Mc).
  \end{multline*}
  Applying $\ellQ$, we find
  \[
   \ellQ\left(\Hc_\bullet^\Q(\bAc,\Mc)\right) \simeq \ellQ\left(\Fc_{\Hc_\bullet^\Q}^0(f,\Mc)\right) \to \cdots \to \ellQ\left(\Fc_{\Hc_\bullet^\Q}^n(f,\Mc)\right) \to \cdots
  \]
  Since $\ellQ$ is a left adjoint and thus preserves colimits, we have 
  \[
   \colim_n \ellQ\left(\Fc_{\Hc_\bullet^\Q}^n(f,\Mc)\right) \simeq \ellQ\left(\colim_n \Fc_{\Hc_\bullet^\Q}^n(f,\Mc)\right) \simeq \ellQ\left(\Hc_\bullet^\Q(\Ac, \Mc)\right).
  \]
  It therefore suffices to prove that for any $n \geq 0$, the morphism $\Fc_{\Hc_\bullet^\Q}^n(f,\Mc) \to \Fc_{\Hc_\bullet^\Q}^{n+1}(f,\Mc)$ is an $\ellQ$-equivalence. Denote by $F^n$ the complex $\ellQ(\Fc_{\Hc_\bullet^\Q}^n(f,\Mc))$.
  Since $\dgMod_\k$, the codomain of $\ellQ$, is a stable $\infty$-category, it is enough to check that
  \[
   \quot{F^{n+1}}{F^n} \simeq \ellQ\left( \quot{\Fc_{\Hc_\bullet^\Q}^{n+1}(f,\Mc)}{\Fc_{\Hc_\bullet^\Q}^n(f,\Mc)} \right)
  \]
  is contractible.
  Using \autoref{lem:filtrationF}, we get 
  \[
   \quot{\Fc_{\Hc_\bullet^\Q}^{n+1}(f,\Mc)}{\Fc_{\Hc_\bullet^\Q}^n(f,\Mc)} \simeq \Ac^{\otimes n} \otimes \Ac(\k) \otimes \Bc_\bullet^\Q(\bAc, \Mc)[n+1].
  \]
  From \autoref{lem:ellBH}, we have $\ellQ\left(\Bc_\bullet^\Q(\bAc, \Mc) \right) \simeq \Bc_\bullet^\k(A,M) \simeq 0$ (using the assumption that $M$ is H-unital over $A$).
  Since the functor $\Bc_\bullet^\Q(\bAc, \Mc)$ is pointed, we get that $\quot{F^{n+1}}{F^n} \simeq 0$ using \autoref{cor:tensorellzero}. We conclude that $\gamma_\Hc$ is an $\ellQ$-equivalence.
 \end{proof}

\begin{lem}\label{lem:deltaeqs}
 If $A$ is H-unital, the four canonical morphisms
 \begin{align*}
  \Hc_\bullet^\Q(\Ac,\Ac(\k)) &\lra^{\delta_\Hc} \Hc_\bullet^\Q(\Ac(\k),\Ac(\k)) \lra 0\\
  \Bc_\bullet^\Q(\Ac,\Ac(\k)) &\lra^{\delta_\Bc} \Bc_\bullet^\Q(\Ac(\k),\Ac(\k)) \lra 0
 \end{align*}
 are $\ellQ$-equivalences.
\end{lem}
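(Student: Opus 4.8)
The plan is to separate the two right-hand maps (those targeting $0$) from $\delta_\Hc$ and $\delta_\Bc$. For the maps to $0$: since $\Ac(\k)$ stands for the constant functor at the algebra $\Ac(\k)\in\dgMod_\Q^{\leq 0}$, both $\Hc_\bullet^\Q(\Ac(\k),\Ac(\k))$ and $\Bc_\bullet^\Q(\Ac(\k),\Ac(\k))$ are assembled levelwise out of tensor powers of a constant functor, hence are themselves constant. By \autoref{rmq:lbar} a constant functor has contractible image under $\ellQ$, so the two maps to $0$ are $\ellQ$-equivalences. It then remains to prove that $\delta_\Hc$ and $\delta_\Bc$ are $\ellQ$-equivalences.

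First I would feed the augmentation $f\colon \Ac\to \Ac(\k)$ (levelwise split surjective, with kernel $\bAc$) into the quotient filtration of \autoref{par:quotfiltration}. This yields $\Qc^0_{\Hc_\bullet^\Q}(f)\simeq \Hc_\bullet^\Q(\Ac,\Ac(\k))$ and $\colim_n\Qc^n_{\Hc_\bullet^\Q}(f)\simeq \Hc_\bullet^\Q(\Ac(\k),\Ac(\k))$, with $\delta_\Hc$ being exactly the induced map from the first term into the colimit. Since $\ellQ$ preserves colimits, it suffices to show that each transition map $\ellQ(\Qc^n_{\Hc_\bullet^\Q}(f))\to\ellQ(\Qc^{n+1}_{\Hc_\bullet^\Q}(f))$ is an equivalence; as the surjection $\Qc^n\to\Qc^{n+1}$ sits in a levelwise short exact sequence with kernel $K^n$, and $\dgMod_\k$ is stable, this reduces to $\ellQ(K^n)\simeq 0$.

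By \autoref{lem:kernelsfiltration}, $K^n\simeq \Hc_\bullet^\Q(\Ac,\Mc_n)[n+1]$ where $\Mc_n:=\Ac(\k)^{\otimes n+1}\otimes\bAc$ is a pointed $\Ac$-bimodule. By \autoref{lem:Cactionell} (with $C=\Q$) its linearization is $M_n:=\ellQ(\Mc_n)\simeq \Ac(\k)^{\otimes n+1}\otimes_\Q A$, which is H-unitary over $A$ by \autoref{rmq:MtimesAHunital} because $A$ is H-unital. The key point is that the left $A$-action on $M_n$ is trivial, since it factors through the null composite $\bAc\to\Ac\to\Ac(\k)$; the wrap-around term of the Hochschild differential therefore vanishes and $\Hc_\bullet^\k(A,M_n)\simeq\Bc_\bullet^\k(A,M_n)\simeq 0$ — this is the $\ellQ$-linear shadow of \autoref{cor:BmodHunital}. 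Combining \autoref{lem:gammaeqs} with \autoref{lem:ellBH} then gives
\[
 \ellQ\bigl(\Hc_\bullet^\Q(\Ac,\Mc_n)\bigr)\simeq\ellQ\bigl(\Hc_\bullet^\Q(\bAc,\Mc_n)\bigr)\simeq\Hc_\bullet^\k(A,M_n)\simeq 0,
\]
so $\ellQ(K^n)\simeq 0$ and $\delta_\Hc$ is an $\ellQ$-equivalence. The case of $\delta_\Bc$ runs identically through \autoref{lem:kernelsfiltration}, \autoref{lem:gammaeqs} and \autoref{lem:ellBH}, but is shorter: there is no wrap-around term to discard, and one concludes directly from $\ellQ(\Bc_\bullet^\Q(\bAc,\Mc_n))\simeq\Bc_\bullet^\k(A,M_n)\simeq 0$.

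The step I expect to be the main obstacle is the identification of the transported bimodule structure on $M_n=\ellQ(\Mc_n)$ — specifically, verifying that $\ellQ$ sends the left $\Ac$-action (which passes through $\Ac(\k)$) to the \emph{trivial} left $A$-action. This triviality is precisely what collapses the Hochschild complex onto the Bar complex and lets Wodzicki-style H-unitality force the vanishing; tracking the genuine module structures, and not merely the underlying complexes, through $\ellQ$ is where the care is needed.
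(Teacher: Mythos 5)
Your proposal is correct and follows essentially the same route as the paper's own proof: kill the constant functors via \autoref{rmq:lbar}, run the quotient filtration of \autoref{par:quotfiltration} through $\ellQ$, identify the kernels via \autoref{lem:kernelsfiltration}, and kill them using \autoref{lem:Cactionell}, \autoref{rmq:MtimesAHunital}, \autoref{lem:gammaeqs}, \autoref{lem:ellBH} and the triviality of the left action (the composite $\bAc \to \Ac \to \Ac(\k)$ being null at the level of strict models, exactly as in the paper's \autoref{lem:kernelsfiltration} and \autoref{cor:BmodHunital}). The final point you flag as a potential obstacle is handled in the paper by precisely the observation you make, so there is no gap.
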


\begin{proof}
 The functors $\Hc_\bullet^\Q(\Ac(\k), \Ac(\k))$ and $\Bc_\bullet^\Q(\Ac(\k), \Ac(\k))$ are constant. Their images by $\ellQ$ thus vanish, by \autoref{rmq:lbar}. It follows that their projections to $0$ are $\ellQ$-equivalences.
 We now focus on the maps $\delta_\Hc$ and $\delta_\Bc$. Let $f \colon \Ac \to \Ac(\k)$ be the augmentation.
 Recall from \autoref{subsec:wodzicki} \autoref{par:quotfiltration} the filtrations by quotients:
 \begin{align*}
   \Bc_\bullet^\Q(\Ac,\Ac(\k)) &\simeq \Qc_{\Bc_\bullet^\Q}^0(f) \to \cdots \to \Qc_{\Bc_\bullet^\Q}^n(f) \to \cdots \to \colim_n \Qc_{\Bc_\bullet^\Q}^n(f) \simeq \Bc_\bullet^\Q(\Ac(\k),\Ac(\k))\\
   \Hc_\bullet^\Q(\Ac,\Ac(\k)) &\simeq \Qc_{\Hc_\bullet^\Q}^0(f) \to \cdots \to \Qc_{\Hc_\bullet^\Q}^n(f) \to \cdots \to \colim_n \Qc_{\Hc_\bullet^\Q}^n(f) \simeq \Hc_\bullet^\Q(\Ac(\k),\Ac(\k)).
 \end{align*}
 Since $\ellQ$ preserves colimits, it suffices to prove that the transition morphisms $\Qc^n \to \Qc^{n+1}$ are $\ellQ$-equivalences.
 Denote by $\Mc(n)$ the $\Ac$-bimodule $\Ac(\k)^{\otimes_\Q n+1} \otimes_\Q \bAc$ and by $M(n)$ its image by $\ellQ$. We get from \autoref{lem:kernelsfiltration} two fiber and cofiber sequences
 \begin{align*}
  \Bc_\bullet^\Q(\Ac, \Mc(n))[n+1] &\lra \Qc_{\Bc_\bullet^\Q}^n(f) \lra \Qc_{\Bc_\bullet^\Q}^{n+1}(f) \\
  \Hc_\bullet^\Q(\Ac, \Mc(n))[n+1] &\lra \Qc_{\Hc_\bullet^\Q}^n(f) \lra \Qc_{\Hc_\bullet^\Q}^{n+1}(f).
 \end{align*}
 Their image by $\ellQ$ are still cofiber sequences, and it is now enough to prove that both $\Bc_\bullet^\Q(\Ac, \Mc(n))$ and $\Hc_\bullet^\Q(\Ac, \Mc(n))$ are cancelled by $\ellQ$.
 We first observe that $\Mc(n)$ is pointed: $\Mc(n)(\k) \simeq 0$. Moreover, we have by \autoref{lem:Cactionell}
 \[
  M(n) = \ellQ(\Mc(n)) \simeq \Ac(\k)^{\otimes_\Q n+1} \otimes_\Q \ellQ(\bAc) = \Ac(\k)^{\otimes_\Q n+1} \otimes_\Q A.
 \]
 \autoref{rmq:MtimesAHunital} implies that $M(n)$ is H-unitary over $A$. Using \autoref{lem:gammaeqs}, we are reduced to the study of $\Bc_\bullet^\Q(\bAc, \Mc(n))$ and $\Hc_\bullet^\Q(\bAc, \Mc(n))$. By \autoref{lem:ellBH}, we get
 \[
  \ellQ\left( \Bc_\bullet^\Q(\bAc, \Mc(n)) \right) \simeq \Bc_\bullet^\k(A,M(n)) \simeq 0
 \]
 and, since the left action of $A$ on $M(n)$ is trivial:
 \[
  \ellQ\left( \Hc_\bullet^\Q(\bAc, \Mc(n)) \right) \simeq \Hc_\bullet^\k(A,M(n)) \simeq \Bc_\bullet^\k(A,M(n)) \simeq 0.
 \]
\end{proof}

\begin{proof}[of \autoref{prop:alphaelleq}]
We first observe that $\alpha_\Bc$ and $\alpha_\Hc$ factor as
\begin{align*}
 \Bc_\bullet^\Q(\bAc) &\lra^{\gamma_\Bc} \Bc_\bullet^\Q(\Ac,\bAc) \lra^{\eta_\Bc} \Bc_\bullet^\Q(\Ac)\\
 \Hc_\bullet^\Q(\bAc) &\lra^{\gamma_\Hc} \Hc_\bullet^\Q(\Ac,\bAc) \lra^{\eta_\Hc} \Hc_\bullet^\Q(\Ac).
\end{align*}
Since $A$ is H-unital and $\bAc$ is pointed, \autoref{lem:gammaeqs} implies that $\gamma_\Bc$ and $\gamma_\Hc$ are $\ellQ$-equivalences.
The homotopy cofibers of $\eta_\Bc$ and $\eta_\Hc$ are respectively $\Bc_\bullet^\Q(\Ac, \Ac(\k))$ and $\Hc_\bullet^\Q(\Ac, \Ac(\k))$. They are cancelled by $\ellQ$ because of \autoref{lem:deltaeqs}. It follows that $\eta_\Bc$ and $\eta_\Hc$ are $\ellQ$ equivalences, and so are $\alpha_\Bc$ and $\alpha_\Hc$.

This concludes the proof of \autoref{prop:alphaelleq} and therefore the proof of \autoref{thm:excision}.
\end{proof}

\subsection{Non-connective \texorpdfstring{$\K$}{K}-theory and the case of schemes}
We now extend our main result (\autoref{cor:tgtK}) to the case of quasi-compact quasi-separated (and possibly derived) schemes.

\begin{prop}
 Let $X$ be a quasi-compact quasi-separated (and possibly derived) scheme over $\k$.
 Denote by $\K_X^\mathrm{nc} \colon \dgArt_\k \to \Sp$ the functor mapping an Artinian dg-algebra $B$ to the non-connective $\K$-theory spectrum of the derived scheme $X \otimes_\k B = X \times \Spec(B)$.
 Using the notations of \autoref{rmk:ellKnc}: the Chern character induces an equivalence
 \[
  \ell^\Sp(\K^\mathrm{nc}_X) \simeq \HC^\k_\bullet(X)[1].
 \]
\end{prop}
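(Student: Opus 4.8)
The plan is to reduce the statement to the affine case, which is exactly \autoref{rmk:ellKnc} (itself a consequence of \autoref{cor:tgtK}), by means of Zariski descent. The decisive point is that we work with \emph{non-connective} $\K$-theory precisely because, unlike its connective cover, it is a localizing invariant of $\Perf$ and therefore satisfies Zariski (indeed Nisnevich) descent; similarly, cyclic homology $\HC_\bullet^\k$ of the stable $\infty$-category $\Perf(X)$ is a localizing invariant, hence satisfies the same descent. Since $X$ is quasi-compact and quasi-separated, it admits a finite Zariski cover by affines whose pairwise intersections are again quasi-compact and quasi-separated; this licenses an induction on the size of such a cover. The base case is an affine (possibly derived) scheme $\Spec C$ with $C \in \cdga_\k$ (in particular unital, hence H-unital), for which \autoref{rmk:ellKnc} gives $\ell^\Sp(\K^{\mathrm{nc}}_{\Spec C}) \simeq \HC_\bullet^\k(C)[1] = \HC_\bullet^\k(\Spec C)[1]$.

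First I would fix a Zariski cover $X = U \cup V$ with $U$, $V$ and $U \cap V$ covered by fewer affines. For every $B \in \dgArt_\k$, the base change $X \otimes_\k B = X \times \Spec(B)$ is covered by $U \otimes_\k B$ and $V \otimes_\k B$ with intersection $(U \cap V) \otimes_\k B$, since open immersions are stable under base change. Zariski descent for $\K^{\mathrm{nc}}$ and for $\HC_\bullet^\k$ then produces, \emph{pointwise in $B$}, Mayer--Vietoris pullback squares, hence equivalences of pre-FMPs
\[
 \K^{\mathrm{nc}}_X \simeq \K^{\mathrm{nc}}_U \times_{\K^{\mathrm{nc}}_{U \cap V}} \K^{\mathrm{nc}}_V \qquad\text{and}\qquad \HC_\bullet^\k(X \otimes_\k -) \simeq \HC_\bullet^\k(U \otimes_\k -) \times_{\HC_\bullet^\k((U \cap V) \otimes_\k -)} \HC_\bullet^\k(V \otimes_\k -),
\]
and the Chern character, being natural in the scheme, is compatible with both squares.

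Second I would apply $\ell^\Sp$. The category $\PFMP_\k^\Sp = \Fct(\dgArt_\k, \Sp)$ is stable and $\dgMod_\k$ is stable; since $\ell^\Sp$ is a left adjoint it preserves all colimits, and a colimit-preserving functor between stable $\infty$-categories is exact, hence preserves finite limits and in particular the Mayer--Vietoris pullback square above. Therefore
\[
 \ell^\Sp(\K^{\mathrm{nc}}_X) \simeq \ell^\Sp(\K^{\mathrm{nc}}_U) \times_{\ell^\Sp(\K^{\mathrm{nc}}_{U \cap V})} \ell^\Sp(\K^{\mathrm{nc}}_V).
\]
By the inductive hypothesis the three corners are $\HC_\bullet^\k(U)[1]$, $\HC_\bullet^\k(U \cap V)[1]$ and $\HC_\bullet^\k(V)[1]$, and the Chern character identifies the gluing maps with those of the descent square for $\HC_\bullet^\k$. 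The pullback of that square is $\HC_\bullet^\k(X)[1]$ by descent for cyclic homology, which yields the claimed equivalence.

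The main obstacle is to make the two descent statements genuinely compatible at the level of pre-FMPs and through $\ell^\Sp$: one must verify that the absolute cyclic homology of the (non-affine) scheme $X$ appearing on the right-hand side is the Zariski-local invariant whose affine value is $\HC_\bullet^\k(C)$, and that the relative Chern character of \autoref{cor:goodwillieHunital} glues to a natural transformation between the two Mayer--Vietoris squares. Once this compatibility is secured, the exactness of $\ell^\Sp$ does all the remaining work, and no excision input beyond \autoref{thm:excision} is needed.
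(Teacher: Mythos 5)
Your proof is correct and follows essentially the same route as the paper: Zariski descent for non-connective $\K$-theory and for cyclic homology reduces everything to the affine case of \autoref{cor:tgtK} (via \autoref{rmk:ellKnc}), and the exactness of $\ell^\Sp$ (a colimit-preserving functor between stable $\infty$-categories) carries the resulting finite limits through. The only cosmetic difference is that you organize the descent as a Mayer--Vietoris induction on the size of an affine cover, whereas the paper writes $X$ directly as a finite colimit of open affines and applies $\ell^\Sp$ to the corresponding finite limit in one step.
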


\begin{proof}
 We write $X \simeq \colim_i \Spec(A_i)$ as a finite colimit of Zariski open affine subschemes. By Zariski descent for $\K$-theory, we have $\K_X^\mathrm{nc} \simeq \lim_i \K^\mathrm{nc}(\Ac_{A_i})$. Since $\ell^\Sp$ is an exact functor between stable $\infty$-categories, it preserves finite limits. We find, using descent for cyclic homology and \autoref{cor:tgtK}:
 \[
  \ell^\Sp(\K_X) \simeq \ell^\Sp(\lim \K^\mathrm{nc}(\Ac_{A_i})) \simeq \lim \ell^\Sp \K^\mathrm{nc}(\Ac_{A_i}) \simeq \lim \HC^\k_\bullet(A_i)[1] \simeq \HC^\k_\bullet(X)[1].
 \]
\end{proof}

\begin{ex}[Relation to the Picard stack]

Let us fix a quasi-compact quasi-separated (derived) scheme $X$ and consider the (abelian) pre-FMP $\PicZ_X$:
\[
 \PicZ_X \colon B \mapsto \{0\} \times_{\PicZ(X)} \PicZ(X \otimes B),
\]
where $\PicZ$ is the graded Picard functor.
It follows from \autoref{ex:artinstacksareFMP} that $\PicZ_X$ satisfies the \ref{cond:schlessinger} and is therefore a formal moduli problem.

Recall that the determinant defines a functorial morphism of spectra from $\K$-theory to the graded Picard group $\det \colon \K^\mathrm{nc} \to \PicZ$.
In particular, we get a morphism of abelian pre-FMP's $\det_X \colon \K^\mathrm{nc}_X \to \PicZ_X$.
Taking the tangent complex yields a morphism
\[
 \mathrm{tr}_X \colon \HC_\bullet^\k(X)[1] \simeq \ell^\Sp(\K^\mathrm{nc}_X) \to \ellAb(\PicZ_X) \simeq \T(\PicZ_X) \simeq \R\Gamma(X, \Oo_X)[1].
\]
The last equivalence is provided by \autoref{ex:tgtcoincides} and the classical computation of the tangent of the Picard stack of $X$ at the trivial bundle.

The canonical morphism $\B\Gm \to \K$ similarly induces a section $s_X$ of $\mathrm{tr}_X$. This identifies $\R\Gamma(X, \Oo_X)$ as a summand of $\HC_\bullet^\k(X)$, which corresponds to the weight $0$ part in the Hodge decomposition of cyclic homology (see \cite{weibel:cyclichodge}).
\end{ex}

\section{Application: the generalized trace map}
\label{sec:tracemap}

Let $A$ be a connective unital dg-algebra over $\k$. Recall from \autoref{subsec:cyclic}, \autoref{par:tracemap}, that the generalized trace map is a (functorial) morphism
\[
 \Tr \colon \CE^\k_\bullet(\gl_\infty(A)) \to \HC_\bullet^\k(A) [1].
\]
A morphism $\CE^\k_\bullet(\gl_\infty(A)) \to \HC_\bullet^\k(A)[1]$ such as $\Tr$ amounts to an $\Lc_\infty$-morphism $\gl_\infty(A) \to \HC_\bullet^\k(A)$, where the RHS is considered with its abelian $\Lc_\infty$-structure. It corresponds to a map $\Tr \colon \gl_\infty(A) \to \HC_\bullet^\k(A)$ in the $\infty$-category $\dgLie_\k$, or equivalently to a map 
\[
 \gl_\infty(A)[1] \to \theta (\HC_\bullet^\k(A)[1])
\]
in the $\infty$-category $\dgLie_\k^\Omega$ of shifted dg-Lie algebras. Recall that $\theta \colon \dgMod_\k \to \dgLie_\k^\Omega$ maps a $\k$-dg-module to the abelian shifted dg-Lie algebra built on that module.

In this section, we will prove that the generalized trace $\Tr$ is tangent (in the sense of formal moduli problems) to the canonical morphism of functors $\BGL \to \K$ mapping a vector bundle to its class in $\K$-theory.
We will see that $\BGL \to \K$ induces a tangent morphism $T \colon \gl_\infty(A) \to \theta(\HC_\bullet^\k(A))$ of dg-Lie algebras over $\k$ and that $T$ is homotopic to the generalized trace $\Tr$. See \autoref{thm:compLQT} below for a precise statement.

\subsection{The tangent Lie algebra of \texorpdfstring{$\BGL$}{BGL}}
Let $\Ac_A \colon \dgArt_\k \to \dgAlg_\Q^{\leq 0}$ denote the functor $B \mapsto A \otimes_\k B$. Denote by $\bAc_A$ its augmentation ideal $\bAc_A(B) \simeq A \otimes_\k \Aug(B)$.

\begin{df} Let $n \in \N \cup \{\infty\}$ and let $\Ac \colon \dgArt_\k \to \dgAlg_\Q^{\leq 0}$ be any functor.
 We denote by $\bBGL_n(\Ac)$ the functor $\dgArt_\k \to \sSets$ mapping $B \in \dgArt_\k$ to
 \[
  \bBGL_n(\Ac)(B) := \hofib(\BGL_n(\Ac(B)) \to \BGL_n(\Ac(\k))).
 \]
\end{df}

\begin{rmq}
 The canonical morphism $\colim_{n \in \N} \bBGL_n(\Ac) \to \bBGL_\infty(\Ac)$ is an equivalence.
\end{rmq}

\begin{ex}
 We assume $\Ac = \Ac_A$.
 Denote by $\exp(\gl_n(\bAc_A))(B)$ the (nilpotent) subgroup of $\GL_n(\Ac_{A}(B))$ of matrices of the form $1 + M$, where $M$ is a matrix with coefficients in $\bAc_A(B) \simeq A \otimes_\k \Aug(B)$.
 We can then identify the homotopy fiber $\bBGL_n(\Ac_A)$ with
 \[
  \bBGL_n(\Ac_A) \simeq \quot{\GL_n(\Ac_A(\k))}{\GL_n(\Ac_A)} \simeq \B \exp(\gl_n(\bAc_A)).
 \]
\end{ex}

For later use, we will work in a slightly bigger generality.
Let $R$ be a (possibly non-unital) discrete $\k$-algebra. We denote by $\g_R$ the functor
\[
 \g_R \colon \dgAlg_\k^{\leq 0,\mathrm{nu}} \to \dgLie_\k^{\leq 0}
\]
mapping a connective dg-algebra $C$ to the Lie algebra underlying the associative algebra $C \otimes_\k R$.
Examples include the case where $R$ is the algebra of $n \times n$-matrices with coefficients in $\k$, where we find $\g_R = \gl_n$.
Obviously, the construction $R \mapsto \g_R$ is functorial.

For an algebra $R$, we also denote by $\bBG_R(\Ac_A) \in \PFMP_\k$ the functor
\[
 \bBG_R(\Ac_A) \colon B \mapsto \B \exp\left(\g_R\left(\bAc_A(B)\right)\right) = \B \exp\left(\g_R\left(A \otimes_\k \Aug(B)\right)\right),
\]
where $\exp\left(\g_R\left(A \otimes_\k \Aug(B)\right)\right)$ is the subgroup of $(A \otimes_\k B \otimes_\k R)^{\times}$ consisting of elements of the form $1 + M$ with $M \in A \otimes_\k \Aug(B) \otimes_\k R$.

\begin{lem}\label{lem:tgtBGL}
There is a functorial (in $R$) equivalence
 \[
  \ell(\bBG_R(\Ac_A)) \simeq \g_R(A)[1] \in \dgLie_\k^\Omega.
 \]
\end{lem}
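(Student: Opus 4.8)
The plan is to identify $\bBG_R(\Ac_A)$ with the formal moduli problem $\e(\g_R(A)[1])$ attached to the shifted dg-Lie algebra $\g_R(A)[1]$; once this identification is in place the computation of $\ell$ is immediate. The first, purely algebraic, step is to rewrite the nilpotent Lie algebra occurring in the definition. Since $\g_R(C) = C \otimes_\k R$ as a complex, there is for every $B \in \dgArt_\k$ a natural identification
\[
\g_R(\bAc_A(B)) = \g_R(A \otimes_\k \Aug(B)) \simeq \g_R(A) \otimes_\k \Aug(B) =: \n_B,
\]
compatible with brackets, the current-algebra bracket on the right using the bracket of $\g_R(A)$ and the commutative multiplication of $\Aug(B)$. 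As $B$ is Artinian, $\Aug(B)$ is a nilpotent connective complex, so $\n_B$ is a nilpotent connective dg-Lie algebra, functorial in both $B$ and $R$.

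Next I would invoke Lie integration. For a nilpotent connective dg-Lie algebra $\n$, the classifying space $\B\exp(\n)$ of the simplicial group $\exp(\n)$ computes the Maurer--Cartan space $\mathrm{MC}_\bullet(\n)$: when $\n$ is concentrated in degree $0$ this is the classical statement that the nerve of the Deligne groupoid of $\n$ (whose only object is $0$, with automorphisms $\exp(\n)$) is $\B\exp(\n)$, and the general connective case follows from the standard dg enhancement of this statement (Hinich--Getzler), applied level-wise in the simplicial model of $A$. Combining this with the Lurie--Pridham description of $\e$, namely $\e(V)(B) \simeq \Map_{\dgLie_\k}(\Dd_\k(B), V[-1]) \simeq \mathrm{MC}_\bullet(V[-1] \otimes_\k \Aug(B))$, and taking $V = \g_R(A)[1]$ so that $V[-1] \otimes_\k \Aug(B) = \g_R(A) \otimes_\k \Aug(B) = \n_B$, yields a natural equivalence of pre-FMPs
\[
\bBG_R(\Ac_A)(B) = \B\exp(\n_B) \simeq \mathrm{MC}_\bullet(\n_B) \simeq \e(\g_R(A)[1])(B).
\]

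Finally, since $\e(\g_R(A)[1])$ lies in the essential image of $\e \colon \dgLie_\k^\Omega \to^\sim \FMP_\k$, it is already a formal moduli problem, so its formalization recovers it; hence $\ell = \T \circ \form$ gives
\[
\ell(\bBG_R(\Ac_A)) \simeq \T(\e(\g_R(A)[1])) \simeq \g_R(A)[1]
\]
by definition of $\e = \T^{-1}$. All the identifications above are natural in $R$, which proves the claimed functoriality.

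I expect the main obstacle to be the Lie-integration step, i.e.\ matching the concrete simplicial-group model $\B\exp(\n_B)$ used in the definition of $\bBG_R$ with the Maurer--Cartan space $\mathrm{MC}_\bullet(\n_B)$ governing the Lurie--Pridham equivalence; the degree-$0$ case is classical, but the connective dg case, and its compatibility with the simplicial structure coming from $A$, must be handled with care. A lower-tech alternative, avoiding $\mathrm{MC}$ spaces, would be to verify directly that $\bBG_R(\Ac_A)$ satisfies the \ref{cond:schlessinger} --- using exactness of $\Aug$, of $A \otimes_\k(-)$ and of $(-)\otimes_\k R$, together with the Goldman--Millson fact that $\B\exp$ sends surjections of nilpotent dg-Lie algebras to fibrations --- and then to compute its tangent complex by evaluating on square-zero extensions $\k \oplus V$, where the vanishing of brackets gives $\B\exp(\g_R(A)\otimes_\k V) \simeq (\g_R(A)\otimes_\k V)[1]$ and identifies the shifted Lie structure with the bracket of $\g_R(A)$.
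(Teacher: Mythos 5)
Your strategy is correct in outline, but it takes a genuinely different route from the paper. You integrate directly: you identify $\bBG_R(\Ac_A)(B) = \B\exp\left(\g_R(A) \otimes_\k \Aug(B)\right)$ with the Maurer--Cartan space $\mathrm{MC}_\bullet\left(\g_R(A) \otimes_\k \Aug(B)\right)$, and match the latter with $\e(\g_R(A)[1])(B)$ via the MC description of the Lurie--Pridham equivalence, thereby exhibiting $\bBG_R(\Ac_A)$ itself as the formal moduli problem $\e(\g_R(A)[1])$. The paper never integrates anything and never shows that $\bBG_R(\Ac_A)$ is a formal moduli problem: it maps $\bBG_R(\Ac_A)$ to the deformation functor $\operatorname{Def}(R \otimes_\k A) \colon B \mapsto F(B) \times_{F(\k)} \{R \otimes_\k A\}$, where $F(B)$ is the maximal $\infty$-groupoid of perfect $R^+ \otimes_\k A \otimes_\k B$-modules; this map sends the base point to the trivial deformation, is an equivalence on loop groups, hence is an $\ell$-equivalence, and \cite[Cor.~5.2.15 and Thm.~3.3.1]{lurie:dagx} identify the tangent Lie algebra of $\operatorname{Def}(R \otimes_\k A)$ with $\End(R \otimes_\k A) = \g_R(A)$. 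What your route buys is a stronger, more self-contained statement (an equivalence of pre-FMPs, not merely an $\ell$-equivalence, and in particular the fact that $\bBG_R(\Ac_A)$ satisfies the Schlessinger condition); what it costs is exactly the two integration-type theorems you invoke --- $\B\exp(\n) \simeq \mathrm{MC}_\bullet(\n)$ for connective nilpotent dg-Lie algebras, and $\Map_{\dgLie_\k}(\Dd_\k(B),\g) \simeq \mathrm{MC}_\bullet\left(\g \otimes_\k \Aug(B)\right)$ --- which are true and available in the literature (Hinich, Getzler, Pridham), but are precisely what the paper's detour through $\Perf$ outsources to Lurie. Note also that your proposed reduction ``level-wise in the simplicial model of $A$'' is not automatic: $\mathrm{MC}_\bullet$ does not obviously commute with geometric realizations, so the connective dg case needs a genuine argument or citation rather than the degree-$0$ statement applied degreewise; you rightly flag this as the main obstacle.

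Separately, your ``lower-tech alternative'' has a real gap: checking the Schlessinger condition and evaluating on square-zero extensions $\k \oplus V$ computes $\T\form(\bBG_R(\Ac_A))$ only as a complex. The lemma asserts an equivalence in $\dgLie_\k^\Omega$, and the shifted Lie bracket on the tangent complex of a formal moduli problem cannot be read off from its values on square-zero extensions --- identifying that bracket is the whole content of the statement, and it requires either your MC comparison or the paper's comparison with a moduli problem (here $\operatorname{Def}(R \otimes_\k A)$) whose tangent Lie algebra is already known.
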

\begin{proof}
 Denote by $R^+ = \k \ltimes R$ the unital $\k$-algebra obtained by formally adding a unit to $R$.
 Consider the functor $F \colon \dgArt_\k \to \sSets$ mapping $B$ to the maximal $\infty$-groupoid in
 \[
  \Perf_{R^+ \otimes_\k A \otimes_\k B}.
 \]
Deformations of the perfect module $R \otimes_\k A$ are then controlled by the functor $\operatorname{Def}(R \otimes_\k A) \colon B \mapsto F(B) \otimes_{F(\k)} \{R \otimes A\}$. It follows from \cite[Cor. 5.2.15 and Thm. 3.3.1]{lurie:dagx} that the deformations of $R \otimes_\k A$ are controlled by the dg-Lie algebra $\g_R(A) = \End(R \otimes_\k A)$.
 Moreover, we have a natural transformation $\bBG_R(\Ac_A) \to \operatorname{Def}(R \otimes_\k A)$ which induces an equivalence on the loop groups. It is therefore an $\ell$-equivalence and the result follows.
\end{proof}

\begin{cor}
 For any $n \in \N \cup \{\infty\}$, the (shifted) tangent Lie algebra of $\bBGL_n(\Ac_A)$ is $\gl_n(A)[1]$.
\end{cor}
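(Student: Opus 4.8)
The plan is to read the corollary as the specialization of \autoref{lem:tgtBGL} to matrix algebras. For finite $n$ I would set $R = \mathrm{M}_n(\k)$, the unital algebra of $n \times n$ matrices over $\k$. By the very definition of $\g_R$ this gives $\g_R = \gl_n$, hence $\g_R(A) = \gl_n(A)$; and unwinding the definitions shows $\bBG_{\mathrm{M}_n(\k)}(\Ac_A)$ coincides with $\bBGL_n(\Ac_A)$, since $\g_{\mathrm{M}_n(\k)}(\bAc_A(B)) = \bAc_A(B) \otimes_\k \mathrm{M}_n(\k) = \gl_n(\bAc_A(B))$ recovers exactly the identification $\bBGL_n(\Ac_A) \simeq \B\exp(\gl_n(\bAc_A))$ recorded in the example above. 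Applying \autoref{lem:tgtBGL} then yields $\ell(\bBGL_n(\Ac_A)) \simeq \gl_n(A)[1]$, which is the asserted description of the shifted tangent Lie algebra, as $\ell = \T \circ \form$ computes precisely this invariant.

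For $n = \infty$ I would exploit the functoriality in $R$ asserted by \autoref{lem:tgtBGL}. The inclusions $\mathrm{M}_n(\k) \hookrightarrow \mathrm{M}_{n+1}(\k)$ form a filtered system of algebras, with $\g_{\mathrm{M}_n(\k)} = \gl_n$, whose colimit of tangent Lie algebras is $\colim_n \gl_n(A)[1] \simeq \gl_\infty(A)[1]$. On the geometric side, the remark preceding the example gives $\colim_n \bBGL_n(\Ac_A) \simeq \bBGL_\infty(\Ac_A)$. Since $\form$ is a left adjoint and $\T$ is an equivalence, the functor $\ell = \T \circ \form$ preserves colimits, so the functorial equivalences of \autoref{lem:tgtBGL} assemble into $\ell(\bBGL_\infty(\Ac_A)) \simeq \colim_n \ell(\bBGL_n(\Ac_A)) \simeq \colim_n \gl_n(A)[1] \simeq \gl_\infty(A)[1]$, as required.

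Since the whole argument is a specialization of a result already in hand, no genuinely new obstacle appears; the only point needing care is the $n = \infty$ case. One should resist applying \autoref{lem:tgtBGL} directly to $R = \mathrm{M}_\infty(\k)$, because its proof controls deformations of the module $R \otimes_\k A$ via $\End(R \otimes_\k A)$ and relies on this being perfect, which fails for the infinite (non-unital) matrix algebra. Routing the $\infty$ case through functoriality in $R$ and colimit-preservation of $\ell$ sidesteps this finiteness issue cleanly, while keeping $\gl_\infty$ and $\exp$ in their intended finitary meaning.
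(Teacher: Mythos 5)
Your proof is correct. For finite $n$ it coincides with the paper's implicit argument: the corollary carries no proof precisely because it is \autoref{lem:tgtBGL} specialized to $R = \mathrm{M}_n(\k)$, via the identification $\bBG_{\mathrm{M}_n(\k)}(\Ac_A) = \B\exp(\gl_n(\bAc_A)) \simeq \bBGL_n(\Ac_A)$ recorded in the example. For $n = \infty$ the paper leaves the step implicit, and the most natural reading of its setup --- the lemma is stated for \emph{possibly non-unital} $R$, and the $\bBG_R$-formalism is later run with $R = \mathrm{M}_\infty(\k)$ in the proof of \autoref{thm:compLQT} --- is a direct application of the lemma to the non-unital algebra $\mathrm{M}_\infty(\k)$, for which $\g_R = \gl_\infty$. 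You instead go through the filtered colimit: the remark that $\colim_n \bBGL_n(\Ac_A) \to \bBGL_\infty(\Ac_A)$ is an equivalence, preservation of colimits by $\ell = \T \circ \form$, functoriality of the lemma in $R$ along the (non-unital) corner inclusions $\mathrm{M}_n(\k) \hookrightarrow \mathrm{M}_{n+1}(\k)$, and the fact that filtered colimits of (shifted) dg-Lie algebras are computed on underlying complexes. This buys genuine robustness: as you observe, the proof of \autoref{lem:tgtBGL} treats $R \otimes_\k A$ as a perfect module over $R^+ \otimes_\k A$ and identifies the tangent with $\End(R \otimes_\k A)$, and for $R = \mathrm{M}_\infty(\k)$ the module $\mathrm{M}_\infty(A)$ is not perfect and its endomorphism algebra is strictly larger than $\gl_\infty(A)$ (it contains, for instance, the identity endomorphism, i.e.\ the infinite identity matrix), so the deformation-theoretic argument does not directly produce $\gl_\infty(A)[1]$ --- a subtlety that affects the paper's own non-unital statement of the lemma. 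Your route invokes the lemma only where its proof is unproblematic, at the small price of checking (immediate from the definitions, and covered by your appeal to functoriality in $R$) that the transition maps of the system $\bBGL_n(\Ac_A)$ agree with those induced by the corner inclusions.
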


\subsection{The generalized trace}
 Let $\Ac \colon \dgArt_\k \to \dgAlg_\Q^{\leq 0}$ be any functor.
 It comes with a canonical natural transformation $\bBGL_\infty(\Ac) \to \infloop \bK(\Ac)$ mapping a vector bundle to its class.
We can now state the main result of this section:
\begin{thm}\label{thm:compLQT}
 Let $A$ be a connective unital dg-algebra over $\k$. We denote by $\Ac_A \colon \dgArt_\k \to \dgAlg^{\leq 0}_\Q$ the functor $B \mapsto B \otimes_\k A$.
 The natural transformation $\bBGL_\infty(\Ac_A) \to \infloop\bK(\Ac_A)$ induces, by taking the tangent Lie algebras, a morphism
 \[
  T \colon \gl_\infty(A)[1] \simeq \ell(\bBGL_\infty(\Ac_A)) \to \ell(\infloop\bK(\Ac_A)) \to \theta\ellAb(\bK(\Ac_A)) \simeq \theta(\HC_\bullet^\k(A)[1])
 \]
 in $\dgLie_\k^\Omega$.
 The morphism $T$ is homotopic to the generalized trace map $\Tr$.
\end{thm}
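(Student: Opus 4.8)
The plan is to compare $T$ and $\Tr$ after transporting both along the adjunction $\bCEO \dashv \theta$ recorded above. Since the target $\theta(\HC_\bullet^\k(A)[1])$ lies in the image of $\theta$, a morphism $\gl_\infty(A)[1] \to \theta(\HC_\bullet^\k(A)[1])$ in $\dgLie_\k^\Omega$ is the same datum as a morphism $\bCEO(\gl_\infty(A)[1]) = \bCE_\bullet^\k(\gl_\infty(A)) \to \HC_\bullet^\k(A)[1]$ in $\dgMod_\k$. By construction (see the discussion opening this section), the morphism adjoint to $\Tr$ is the (reduced) generalized trace map. It therefore suffices to show that the morphism adjoint to $T$ is homotopic to it.

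First I would rewrite $\bCEO \circ \ell$ in linear terms. Using the commuting squares of left adjoints of the diagrammatic summary, namely $\bCEO \circ \T \simeq \TAb \circ \FAb$ and $\FAb \circ \form \simeq \LAb \circ \infsusp$, together with the factorization $\ellAb(-) \simeq \ellQ(-\wedge \Q)$ of \autoref{prop:fmpQ}, one obtains for every pointed $F \in \PFMP_\k$ a natural equivalence
\[
 \bCEO(\ell F) \simeq \ellAb(\infsusp F) \simeq \ellQ\big(\widebar{\mathrm C}_\bullet(F,\Q)\big),
\]
where $\widebar{\mathrm C}_\bullet(F,\Q)$ denotes the pointwise reduced rational chains (so that on the representables it recovers the generators $\SQb(B)$ of \autoref{lem:generateurs}). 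Under this identification the Beck--Chevalley map $\ell(\infloop Y) \to \theta\ellAb(Y)$ is the image under $\ellQ$ of the rational Hurewicz counit $\widebar{\mathrm C}_\bullet(\infloop Y,\Q) \to Y \wedge \Q$. Applying this to $F = \bBGL_\infty(\Ac_A)$ and invoking \autoref{prop:malcev} pointwise for the nilpotent Lie algebra $\gl_\infty(\bAc_A(B))$ (after levelwise realization, as in \autoref{par:chern}), the monoidality of $\ellQ$ (\autoref{prop:monoidal}) and $\ellQ(\bAc_A) = A$ then yield
\[
 \bCEO\big(\ell(\bBGL_\infty(\Ac_A))\big) \simeq \ellQ\big(\bCE_\bullet^\Q(\gl_\infty(\bAc_A))\big) \simeq \bCE_\bullet^\k(\gl_\infty(A)),
\]
compatibly with the equivalence $\ell(\bBGL_\infty(\Ac_A)) \simeq \gl_\infty(A)[1]$ of \autoref{lem:tgtBGL}.

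It then remains to compute the adjoint of $T$ on the $\K$-theory side. Using the Volodin model $\infloop\bK(\Ac_A)(B) \simeq X(A\otimes_\k B, \bAc_A(B))^+$ and the explicit relative Chern character of \autoref{par:chern}, the composite of the Hurewicz counit with $\bch$ identifies, via Cortiñas--Weibel (\autoref{rmq:relChern}), with $\bch^\Q$. Hence the morphism adjoint to $T$ is $\ellQ$ applied to
\[
 \widebar{\mathrm C}_\bullet(\bBGL_\infty(\Ac_A),\Q) \xrightarrow{\widebar{\mathrm C}_\bullet(\phi,\Q)} \widebar{\mathrm C}_\bullet(\infloop\bK(\Ac_A),\Q) \xrightarrow{\bch^\Q} \bHC_\bullet^\Q(\Ac_A)[1]
\]
(where $\phi$ is the natural transformation of the theorem), followed by the excision equivalence $\ellQ(\bHC_\bullet^\Q(\Ac_A)[1]) \simeq \HC_\bullet^\k(A)[1]$ of \autoref{cor:tgtK}. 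Since the generalized trace on $\gl_\infty(A)$ is itself $\ellQ$ of the pointwise trace $\bCE_\bullet^\Q(\gl_\infty(\bAc_A)) \xrightarrow{\Tr} \bHC_\bullet^\Q(\Ac_A)[1]$, the whole statement reduces, at the level of $\Q$-linear pre-FMP's, to the identity
\[
 \bch^\Q \circ \widebar{\mathrm C}_\bullet(\phi,\Q) \simeq \Tr \circ (\text{Malcev}),
\]
which is precisely what the construction of $\bch^\Q$ in \autoref{par:chern} encodes: there $\bch^\Q$ is defined by transporting the generalized trace along the equivalences $\Q[\B\mathrm{T}^\sigma_n(A,I)] \simeq \CE_\bullet^\Q(\mathrm{t}^\sigma_n(A,I))$ and the Lie inclusions $\mathrm{t}^\sigma_n(A,I) \hookrightarrow \gl_\infty(A)$.

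The hard part will be establishing this last compatibility on the nose. The source of $\phi$ is the classifying space $\B\exp(\gl_\infty(\bAc_A(B)))$ of the \emph{full} relative linear group, whereas $\bch^\Q$ is assembled from the \emph{triangular} Volodin subgroups $\mathrm{T}^\sigma_n$; matching the two forces me to factor $\phi$ through the relative plus-construction fiber $X^+$ of the Volodin proposition and to check, via the functoriality of \autoref{prop:malcev} with respect to the Lie inclusions and the compatibility of $\phi$ with the inclusion $X \hookrightarrow \BGL_\infty$, that the induced map on reduced rational chains followed by $\bch^\Q$ is the generalized trace on all of $\gl_\infty$. Once this is secured at the level of pre-FMP's, applying $\ellQ$ and the identifications above gives $T \simeq \Tr$ in $\dgLie_\k^\Omega$.
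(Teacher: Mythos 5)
Your scaffolding matches the paper's proof quite closely: both arguments factor the tangent morphism $T$ through the chain-level relative Chern character, use the inclusion of $\bBGL_\infty(\Ac_A) \simeq \B\exp(\gl_\infty(\bAc_A))$ into the Volodin space, invoke Malcev's \autoref{prop:malcev} together with functoriality of the generalized trace to replace $\bch^\Q$ by $\Tr \circ (\text{Malcev})$, and use the monoidality of $\ellQ$ (\autoref{prop:monoidal}) to identify $\ellQ\big(\bCE_\bullet^\Q(\gl_\infty(\bAc_A))\big) \simeq \bCE_\bullet^\k(\gl_\infty(A))$ and $\ellQ(\Tr)$ with the $\k$-linear trace. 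Incidentally, the difficulty you flag at the end — matching the full group $\exp(\gl_\infty(\bAc_A))$ against the triangular subgroups $\mathrm{T}^\sigma_n$ — is the \emph{easy} part: the full group is itself the Volodin piece attached to the trivial partial order, and the functoriality of both Malcev's quasi-isomorphism and the trace with respect to the Lie inclusions $\mathrm{t}^\sigma_n \hookrightarrow \gl_\infty$ makes the relevant square commute, exactly as in the paper.

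The genuine gap is hidden in your phrase \enquote{compatibly with the equivalence $\ell(\bBGL_\infty(\Ac_A)) \simeq \gl_\infty(A)[1]$ of \autoref{lem:tgtBGL}.} You produce two \emph{a priori unrelated} identifications of $\bCEO\big(\ell(\bBGL_\infty(\Ac_A))\big)$ with $\bCE_\bullet^\k(\gl_\infty(A))$: one by applying $\bCEO$ to the deformation-theoretic equivalence of \autoref{lem:tgtBGL} (which comes from Lurie's classification of deformations of the module $R \otimes_\k A$), the other via rational chains, Malcev, and monoidality of $\ellQ$. Your argument only shows that the adjoint of $T$ equals $\Tr$ precomposed with the self-equivalence $\sigma$ of $\bCE_\bullet^\k(\gl_\infty(A))$ comparing these two identifications; without knowing $\sigma \simeq \id$ you cannot conclude $T \simeq \Tr$ (a priori $\sigma$ could be induced by, say, a nontrivial scalar on $\gl_\infty(A)$). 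This normalization is precisely what the paper's final lemma proves: the tangent morphism of $\bBG_R(\Ac_A) \to \Q[\bBG_R(\Ac_A)] \simeq \CE_\bullet^\Q(\g_R(\bAc_A))$ is the \emph{unit} of the adjunction $\bCEO \dashv \theta$. Its proof is the real content of the theorem and is not formal: one reduces to discrete algebras by geometric realization, uses compatibility of Malcev's map with the standard filtrations (placing everything in the heart of Beilinson's t-structure) and with the coalgebra structures to lift the comparison to a natural endotransformation of the underlying-Lie-algebra functor $\mathrm{Alg}_\k^{\mathrm{nu}} \to \mathrm{Lie}_\k$, and then a Yoneda argument (the forgetful functor to sets being represented by $\k[\mathrm{t}]^{\geq 1}$) shows this transformation is given by a polynomial $P = a\mathrm{t}$ with $a^2 = a$, hence is $0$ or the identity, and it cannot vanish. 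Your proposal needs this entire step; as written, it stops exactly where the difficulty begins.
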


\begin{proof}
 Recall that the equivalence $\ellAb(\bK(\Ac_A)) \simeq \HC^\k_\bullet(A)[1]$ is built through the relative Chern character $\bK(\Ac_A) \to \jQ \bHC^\Q_\bullet(\Ac_A)[1]$. In particular, the morphism $T$ is induced by the natural transformation
 \[
  \bch_{\GL} \colon \bBGL_\infty(\Ac_A) \lra \infloop \bK(\Ac_A) \lra \nu(\bHC^\Q_\bullet(\Ac_A)[1])
 \]
where $\nu \simeq \infloop \circ \jQ \colon \PFMP_\k^\Q \to \PFMP_\k$ is the forgetful functor. Denote by $\Q[-]$ the left adjoint to $\nu$ (so that it computes point-wise the rational homology of the given simplicial set).
The natural transformation $\bch_{\GL}$ then factors as
\[
 \bch_{\GL} \colon \bBGL_\infty(\Ac_A) \lra \nu \Q[\bBGL_\infty(\Ac_A)] \lra \nu \Q[X(\Ac_A, \bAc_A)] \lra^{\operatorname{\nu}\bch^\Q} \nu \bHC^\Q_\bullet(\Ac_A)[1].
\]
Recalling the construction of $\bch^\Q$ and using Malcev's theory for $\gl_\infty(\bAc_A)$, we get the commutative diagram
\[
\begin{tikzcd}[column sep=1.3em]
  \bBGL_\infty(\Ac_A) \ar{r} & \nu\Q[\bBGL_\infty(\Ac_A)] \ar{r} \ar{d}[sloped,above]{\simeq} & \nu\Q[X(\Ac_A,\bAc_A)] \ar{r}{\bch^\Q} \ar{d}[sloped,above]{\simeq} & \nu \bHC^\Q_\bullet(\Ac_A)[1] \ar[equal, no head]{d} \\
  & \nu \CE_\bullet^\Q(\gl_\infty(\bAc_A)) \ar{r}{\mathrm{i}} & \displaystyle \nu \colim_{n, \sigma} \CE_\bullet^\Q(\mathrm{t}^\sigma_n(\Ac_A,\bAc_A)) \ar{r}{\Tr} & \nu \bHC_\bullet^\Q(\Ac_A)[1].
\end{tikzcd}
\]
By functoriality of the generalized trace map, we find that the composite map $\Tr \circ \operatorname{i}$ is homotopic to
\[
 \nu \CE_\bullet^\Q(\gl_\infty(\bAc_A)) \lra^{\Tr} \HC_\bullet^\Q(\bAc_A)[1] \lra^{\eta_\HC} \bHC_\bullet^\Q(\Ac_A)[1].
\]
All in all, we get that the natural transformation $\bch_{\GL}$ is homotopic to the composite
\[
 \bBGL_\infty(\Ac_A) \lra \nu \CE_\bullet^\Q(\gl_\infty(\bAc)) \lra \HC_\bullet^\Q(\bAc_A)[1] \lra^{\eta_\HC} \bHC_\bullet^\Q(\Ac_A)[1].
\]
Passing to the tangent morphism and using the Beck-Chevalley natural transformation, we find
\begin{multline*}
T \colon \gl_\infty(A)[1] \to \ell \nu \CE_\bullet^\Q(\gl_\infty(\bAc_A)) \to \theta \ellQ \CE_\bullet^\Q(\gl_\infty(\bAc_A)) \to \theta \ellQ \HC_\bullet^\Q(\bAc_A)[1]\\ \to^\sim \theta \ellQ \bHC_\bullet^\Q(\Ac_A)[1] \simeq \theta \HC_\bullet^\k(A)[1].
\end{multline*}

\begin{lem}
Let $R$ be a (possibly non-unital) $\k$-algebra and $A$ be a (possibly non-unital) connective dg-algebra over $\k$.
 The natural morphism
 \[
  \ellQ \CE_\bullet^\Q(\g_R(\bAc_A)) \to \bCE_\bullet^\k(\g_R(A))
 \]
 induced by the lax monoidal structure on $\eQ$ is an equivalence.
\end{lem}
\begin{rmq}\label{rmq:Risk}
 We have $\g_R(A \otimes_\k -) \simeq (R \otimes_\k A) \otimes_\k -$. Up to replacing $A$ with $R \otimes_\k A$, we can assume that $R = \k$. Note that $\g_\k = \gl_1$ computes the underlying dg-Lie algebra of a given dg-algebra.
\end{rmq}

\begin{proof}
By the above remark, we restrict ourselves to the case $R = \k$.
The functor $\bAc_A$ is pointed. In particular, the morphism $\ellQ(\bAc_A^{\otimes_\Q p}) \to A^{\otimes_\k p}$ is an equivalence for any $p$ (see \autoref{prop:monoidal}). Since $\ellQ$ preserves colimits, we find 
\[
 \ellQ(\Sym^p_\Q(\bAc_A^{\otimes p}[1])) \simeq \Sym^p_\k(A[1]).
\]
The Chevalley-Eilenberg complex comes with a natural filtration whose graded parts are symmetric powers, and the result follows (since $\ellQ$ preserve colimits).
\end{proof}

As a consequence, we get a commutative diagram, for any $A \in \dgAlg_\k^{\leq 0, \mathrm{nu}}$
\[
\begin{tikzcd} 
\ellQ(\CE_\bullet^\Q(\gl_\infty(\bAc_A))) \ar{r}{\ellQ(\Tr)} \ar{d}[sloped, above]{\simeq} & \ellQ(\HC_\bullet^\Q(\bAc_A)[1]) \ar{d}[sloped, above]{\simeq}[swap]{\ellQ(\alpha_{\HC}) \circ \beta_{\HC}} \\
\CE_\bullet^\k(\gl_\infty(A)) \ar{r}{\Tr} & \HC_\bullet^\k(A)[1].
\end{tikzcd}
\]
In particular, the proof of \autoref{thm:compLQT} now reduces to the following
\begin{lem}
For any (possibly non-unital) $\k$-algebra $R$ and any $A \in \dgAlg_\k^{\leq 0, \mathrm{nu}}$, the tangent morphism of the composite
\[
 \bBG_R(\Ac_A) \lra \Q[\bBG_R(\Ac_A)] \simeq \CE_\bullet^\Q(\g_R(\bAc_A))
\]
is homotopic (as a morphism $\g_R(A)[1] \to \theta \ellQ(\CE_\bullet^\Q(\g_R(\bAc_A))) \simeq \theta \bCE_\bullet^\k(\g_R(A))$ in $\dgLie_\k^\Omega$) to the unit of the adjunction $\bCEO(-) = \bCE_\bullet^\k(-[-1]) \dashv \theta$.
\end{lem}

\begin{proof}
By \autoref{rmq:Risk}, we may assume $R = \k$.
\renewcommand{\Alg}{\mathrm{Alg}}
We are to study a morphism $ A[1] \to \theta \bCE^\k_\bullet(A)$ in $\dgLie^\Omega_\k$.
By adjunction, it suffices to identify the corresponding morphism $\bCE_\bullet^\k(A) \to \bCE_\bullet^\k(A)$ with the identity. Note that this is the linear tangent of the Malcev quasi-isomorphism $\Q[\bBG_R(\Ac_A)] \simeq \CE_\bullet^\Q(\g_R(\bAc_A))$. It is actually enough to study the functorial morphism
\[
 \xi_A \colon \CE^\k_\bullet(A) \to \CE^\k_\bullet(A)
\]
obtained by adjoining a unit on both sides.

 This construction is functorial in $A \in \dgAlg_\k^{\leq 0, \mathrm{nu}}$ and moreover the functor $A \mapsto \CE_\bullet^\k(A)$ preserves geometric realizations.
 Since every connective dg-algebra can be obtained as the geometric realization of a diagram of discrete algebras, the natural transformation $\xi$ is determined by its values on discrete algebras. We thus restrict to discrete algebras.
 
 Malcev's quasi-isomorphism is compatible with the standard filtrations. It follows that so is $\xi$. The associated graded part of weight $p$ of both the source and target of $\xi_A$ vanishes for $p \leq 0$ and is canonically isomorphic to $\Sym^p(A[1]) = (\Lambda^p L)[p]$ for positive $p$'s. In particular, both source and target of $\xi_A$ live in the heart of Beilinson's t-structure on filtered complexes (see the appendix of \cite{beilinson:perv}). We can thus in with the $1$-category of complexes.
 
 With Malcev's quasi-isomorphism being compatible with the coalgebras structures, the natural transformation $\xi$ also consists of morphisms of coalgebras. Since $\CE^\k_\bullet \colon \mathrm{Lie}_\k \to \mathrm{coAlg}^\mathrm{filt}_\k$ from discrete Lie algebras to filtered coalgebras is fully faithful, it follows that $\xi$ lifts to a self natural transformation $\zeta \colon F \Rightarrow F$ of the functor $F \colon \mathrm{Alg}_\k^{\mathrm{nu}} \to \mathrm{Lie}_\k$ mapping an algebra to its underlying Lie algebra.

By forgetting down to the category of small sets, we obtain a self natural transformation (still denoted $\zeta$) of the forgetful functor $G \colon \Alg_\k \to \mathrm{Sets}$. The functor $G$ is representable by the non-unital algebra $\k[\mathrm{t}]^{\geq 1}$ of polynomials $P$ over $\k$ such that $P(0) = 0$. By the Yoneda lemma, the natural transformation $\zeta$ is determined by $P \in \k[\mathrm{t}]^{\geq 1}$.
The fact that $\zeta$ is $\k$-linear implies that $P$ is of degree $1$, and the fact that it preserves the Lie brackets implies that $P = a\mathrm{t}$ with $a^2 = a$.
In particular, the natural transformation $\zeta$ (and therefore also $\xi$) is either the identity or the $0$ transformation.
Since $\xi$ is the image by $\ellQ$ of the Malcev equivalence $\Q[\bBG_\k(\bAc_-)] \simeq \CE_\bullet^\Q(\g_\k(\bAc_-))$, it certainly cannot vanish.
\end{proof}

This concludes the proof of \autoref{thm:compLQT}.
\end{proof}

\addcontentsline{toc}{section}{References}


\begin{thebibliography}{CHW}
\expandafter\ifx\csname fonteauteurs\endcsname\relax
\def\fonteauteurs{\scshape}\fi

\bibitem[Be{\u{\i}}1]{beilinson:perv}
A.~A. \bgroup\fonteauteurs\bgroup Be{\u{\i}}linson\egroup\egroup{}:
\newblock On the derived category of perverse sheaves.
\newblock \emph{In} {\em {$K$}-theory, arithmetic and geometry (Moscow,
  1984{\textendash}1986)}, volume 1289 of {\em Lecture Notes in Math.}, pages
  27--41. Springer, Berlin, 1987.

\bibitem[Bei2]{beilinson:relK}
Alexander \bgroup\fonteauteurs\bgroup Beilinson\egroup\egroup{}:
\newblock Relative continuous K-theory and cyclic homology.
\newblock {\em M{\"u}nster J. Math.}, 7(1) pp.51--81, 2014.

\bibitem[BKP]{blanckatzarkovpandit}
Anthony \bgroup\fonteauteurs\bgroup Blanc\egroup\egroup{}, Ludmil
  \bgroup\fonteauteurs\bgroup Katzarkov\egroup\egroup{} and Pranav
  \bgroup\fonteauteurs\bgroup Pandit\egroup\egroup{}:
\newblock Generators in formal deformations of categories.
\newblock {\em Compositio Math.}, 154(10) pp.2055--2089, 2018.

\bibitem[Blo]{bloch:tangentk}
Spencer \bgroup\fonteauteurs\bgroup Bloch\egroup\egroup{}:
\newblock On the tangent space to Quillen K-theory.
\newblock pages 205--210. 1973.

\bibitem[Bur]{burghelea:cyclicK}
Dan \bgroup\fonteauteurs\bgroup Burghelea\egroup\egroup{}:
\newblock Cyclic homology and the algebraic K-theory of spaces I.
\newblock volume~55 of {\em Contemp. Math.}, pages 89--105. 1986.

\bibitem[Cat]{cathelineau:lambda}
Jean-Louis \bgroup\fonteauteurs\bgroup Cathelineau\egroup\egroup{}:
\newblock $\lambda$-Structures in Algebraic K-Theory and Cyclic Homology.
\newblock {\em K-theory}, 4 pp.591--606, 1991.

\bibitem[CHW]{chw:lambda}
Guillermo~H. \bgroup\fonteauteurs\bgroup Corti{\~n}as\egroup\egroup{},
  Christian \bgroup\fonteauteurs\bgroup Haesemeyer\egroup\egroup{} and
  Charles~A. \bgroup\fonteauteurs\bgroup Weibel\egroup\egroup{}:
\newblock Infinitesimal cohomology and the Chern character to negative cyclic
  homology.
\newblock {\em Math. Ann.}, 344(4) pp.891--922, 2009.

\bibitem[CW]{cortinasweibel:relative}
Guillermo~H. \bgroup\fonteauteurs\bgroup Corti{\~n}as\egroup\egroup{} and
  Charles~A. \bgroup\fonteauteurs\bgroup Weibel\egroup\egroup{}:
\newblock Relative Chern characters for nilpotent ideals.
\newblock \emph{In} {\em Algebraic topology}, volume~4 of {\em Abel Symp.},
  pages 61--82. Springer, Berlin, 2009.

\bibitem[DL]{DonadzeLadra}
Guram \bgroup\fonteauteurs\bgroup Donadze\egroup\egroup{} and Manuel
  \bgroup\fonteauteurs\bgroup Ladra\egroup\egroup{}:
\newblock The excision theorems in Hochschild and cyclic homologies.
\newblock {\em Proc. Roy. Soc. Edinburgh Sect. A}, 144(2) pp.305--317, 2014.

\bibitem[FHK]{faontehennionkapranov:kacmoody}
Giovanni \bgroup\fonteauteurs\bgroup Faonte\egroup\egroup{}, Benjamin
  \bgroup\fonteauteurs\bgroup Hennion\egroup\egroup{} and Mikhail
  \bgroup\fonteauteurs\bgroup Kapranov\egroup\egroup{}:
\newblock Higher Kac{\textendash}Moody algebras and moduli spaces of G-bundles.
\newblock {\em Adv. Math.}, 346 pp.389--466, 2019.

\bibitem[FT]{feigintsygan:addK}
Boris~L. \bgroup\fonteauteurs\bgroup Fe{\u{\i}}gin\egroup\egroup{} and Boris~L.
  \bgroup\fonteauteurs\bgroup Tsygan\egroup\egroup{}:
\newblock Additive K-theory.
\newblock volume 1289 of {\em Lecture Notes in Math.}, pages 67--209. Springer,
  Berlin, 1987.

\bibitem[GG]{guccioneguccione}
Jorge~A. \bgroup\fonteauteurs\bgroup Guccione\egroup\egroup{} and Juan~J.
  \bgroup\fonteauteurs\bgroup Guccione\egroup\egroup{}:
\newblock The theorem of excision for Hochschild and cyclic homology.
\newblock {\em J. Pure and App. Alg}, 106 pp.57--60, 1996.

\bibitem[Goo]{goodwillie:tangentk}
Thomas~G. \bgroup\fonteauteurs\bgroup Goodwillie\egroup\egroup{}:
\newblock Relative algebraic K-theory and cyclic homology.
\newblock {\em Annals of Math.}, 124 pp.347--402, 1986.

\bibitem[HAlg]{lurie:halg}
Jacob \bgroup\fonteauteurs\bgroup Lurie\egroup\egroup{}:
\newblock Higher algebra.
\newblock 2016.

\bibitem[HTT]{lurie:htt}
Jacob \bgroup\fonteauteurs\bgroup Lurie\egroup\egroup{}:
\newblock {\em Higher topos theory}, volume 170 of {\em Annals of Mathematics
  Studies}.
\newblock Princeton University Press, Princeton, NJ, 2009.

\bibitem[Lod]{loday:hc}
Jean-Louis \bgroup\fonteauteurs\bgroup Loday\egroup\egroup{}:
\newblock {\em Cyclic homology}, volume 301 of {\em Grundlehren der
  Mathematischen Wissenschaften}.
\newblock Springer-Verlag, Berlin, 1992.

\bibitem[LQ]{lodayquillen}
Jean-Louis \bgroup\fonteauteurs\bgroup Loday\egroup\egroup{} and Daniel
  \bgroup\fonteauteurs\bgroup Quillen\egroup\egroup{}:
\newblock Homologie cyclique et homologie de l'alg{\`e}bre de Lie des matrices.
\newblock {\em C. R. Acad. Sci. Paris S{\'e}r. I Math.}, 296(6) pp.295--297,
  1983.

\bibitem[Lur]{lurie:dagx}
Jacob \bgroup\fonteauteurs\bgroup Lurie\egroup\egroup{}:
\newblock Derived algebraic geometry X: Formal Moduli Problems.
\newblock 2011, available at
  [\url{http://www.math.harvard.edu/~lurie/papers/DAG-X.pdf}].

\bibitem[Mal]{malcev:lie}
Anatoly~I. \bgroup\fonteauteurs\bgroup Mal'cev\egroup\egroup{}:
\newblock Nilpotent torsion free groups.
\newblock {\em Izvestiya Akad. Nauk. SSSR. Ser. Mat.}, 13 pp.201--212, 1949.

\bibitem[Pri1]{pridham:deformation}
Jonathan~P. \bgroup\fonteauteurs\bgroup Pridham\egroup\egroup{}:
\newblock Unifying derived deformation theories.
\newblock {\em Advances in Mathematics}, 224(3) pp.772--826, 2010.

\bibitem[Pri2]{pridham:smoothfunctions}
Jonathan~P. \bgroup\fonteauteurs\bgroup Pridham\egroup\egroup{}:
\newblock Smooth functions on algebraic K-theory.
\newblock {\em Adv. Math.}, 320 pp.795--826, 2017.

\bibitem[SS]{schwedeshipley:monoidalDK}
Stefan \bgroup\fonteauteurs\bgroup Schwede\egroup\egroup{} and Brooke
  \bgroup\fonteauteurs\bgroup Shipley\egroup\egroup{}:
\newblock Equivalences of monoidal model categories.
\newblock {\em Algebr. Geom. Topol.}, 3 pp.287--334, 2003.

\bibitem[Sus]{suslin:eqK}
Andrei~A. \bgroup\fonteauteurs\bgroup Suslin\egroup\egroup{}:
\newblock On the equivalence of K-theories.
\newblock {\em Communications in algebra}, 9(15) pp.1559--1566, 1981.

\bibitem[SW]{suslinwodzicki}
Andrei~A. \bgroup\fonteauteurs\bgroup Suslin\egroup\egroup{} and Mariusz
  \bgroup\fonteauteurs\bgroup Wodzicki\egroup\egroup{}:
\newblock Excision in algebraic K-theory.
\newblock {\em Ann. of Math. (2)}, 136(1) pp.51--122, 1992.

\bibitem[Tam]{tamme:excisionrevisited}
Georg \bgroup\fonteauteurs\bgroup Tamme\egroup\egroup{}:
\newblock Excision in algebraic K-theory revisited.
\newblock {\em Compos. Math.}, 154(9) pp.1801--1814, 2018.

\bibitem[Tsy]{tsygan:HC}
Boris~L. \bgroup\fonteauteurs\bgroup Tsygan\egroup\egroup{}:
\newblock Homology of matrix Lie algebras over rings and the Hochschild
  homology.
\newblock {\em Uspekhi Mat. Nauk}, 38(2(230)) pp.217--218, 1983.

\bibitem[TV]{toen:hagii}
Bertrand \bgroup\fonteauteurs\bgroup To{\"e}n\egroup\egroup{} and Gabriele
  \bgroup\fonteauteurs\bgroup Vezzosi\egroup\egroup{}:
\newblock Homotopical Algebraic Geometry II: geometric stacks and applications.
\newblock {\em Memoirs of the AMS}, 193(902) pp.257--372, 2008.

\bibitem[Vol]{volodin:ktheory}
I.~A. \bgroup\fonteauteurs\bgroup Volodin\egroup\egroup{}:
\newblock Algebraic K-theory as an extraordinary homology theory on the
  category of associative rings with a unit.
\newblock {\em Izv. Akad. Nauk SSSR Ser. Mat.}, 35, 1971.

\bibitem[Wal]{waldhausen:ktheory}
Friedhelm \bgroup\fonteauteurs\bgroup Waldhausen\egroup\egroup{}:
\newblock Algebraic K-theory of spaces.
\newblock volume 1126 of {\em Lecture Notes in Math.}, pages 318--419.
  Springer, 1983.

\bibitem[Wei]{weibel:cyclichodge}
Charles \bgroup\fonteauteurs\bgroup Weibel\egroup\egroup{}:
\newblock The Hodge filtration and cyclic homology.
\newblock {\em K-theory}, 12 pp.145--164, 1997.

\bibitem[Wod]{wodzicki:excision}
Mariusz \bgroup\fonteauteurs\bgroup Wodzicki\egroup\egroup{}:
\newblock Excision in Cyclic Homology and in Rational Algebraic K-theory.
\newblock {\em Annals of Math.}, 129 pp.591--639, 1989.

\end{thebibliography}
\end{document}